\documentclass[11pt]{amsart}

\usepackage[utf8x]{inputenc} 
\usepackage[a4paper]{geometry} 
\usepackage{enumerate}  
\usepackage{amsmath} 
\usepackage{amssymb}
\usepackage{color,colortbl}
\usepackage{multirow}
\usepackage{hyperref}  
\usepackage{verbatim}
                 \hypersetup{ pdfborder={0 0 0}, 
                              colorlinks=true, 
                              linktoc=page,
                              pdfauthor={Giovanni Staglian{\`o}}, 
                              pdftitle={Special cubic birational transformations of projective spaces}} 
\definecolor{Gray}{gray}{0.9}                            
\usepackage{mathptmx}                      
   
\theoremstyle{plain} 
\newtheorem{proposition}{Proposition}[section] 
\newtheorem{theorem}[proposition]{Theorem} 
\newtheorem{lemma}[proposition]{Lemma} 
\newtheorem{corollary}[proposition]{Corollary} 
 
\theoremstyle{definition}

\newtheorem{example}[proposition]{Example} 
\theoremstyle{remark} 
\newtheorem{remark}[proposition]{Remark} 
                              
\newcommand{\I}{{\mathcal{I}}}     
\renewcommand{\O}{{\mathcal{O}}}   
\newcommand{\T}{{\mathcal{T}}}    
\newcommand{\E}{{\mathcal{E}}}    
\newcommand{\B}{{\mathfrak{B}}} 
\newcommand{\R}{{\mathfrak{R}}} 
\newcommand{\N}{{\mathcal{N}}}     
 
\newcommand{\ZZ}{{\mathbf{Z}}} 
      
\newcommand{\PP}{{\mathbb{P}}}

\numberwithin{equation}{section}

\title{Special cubic birational transformations of projective spaces} 
\author[G. Staglian\`o]{Giovanni Staglian\`o} 
\address{Dipartimento di Matematica e Informatica, Universit\`{a} degli Studi di Catania, Viale A. Doria 5, 95125 Catania, Italy}
\date{\today} 
\email{\href{mailto:giovannistagliano@gmail.com}{giovannistagliano@gmail.com}} 
\subjclass[2010]{14E05, 
                 14E07,
                 14J30  
                 } 

\begin{document}

\begin{abstract} 
We extend our classification 
of special Cremona transformations 
whose base locus has dimension at most three 
to the case when 
the target space 
 is replaced by a (locally) factorial complete intersection.
\end{abstract}

\maketitle

\section*{Introduction}
A birational transformation of a complex projective space into a not too singular irreducible projective variety
is called \emph{special} if its base locus scheme is smooth and connected.
The study of special birational transformations started by Semple and  Tyrrell
in \cite{cite1-semple,semple-tyrrell,cite2-semple}, who in particular constructed  
the main examples of special Cremona transformations having two-dimensional base locus.
Later, a more systematic treatment has been provided by Crauder, Katz, Ein, and Shepherd-Barron in
\cite{katz-cubo-cubic,crauder-katz-1989,ein-shepherdbarron,crauder-katz-1991},
and further contributions has been given in \cite{hulek-katz-schreyer}.
More recently, 
other authors and ourselves 
obtained more results on the theory, 
focusing mainly on classification problems; see 
\cite{russo-qel1,note,alzati-sierra-quadratic,note2,note3,li2016,alzati-sierra,fu-hwang,note4}.

In this paper, we consider the problem of classifying special birational 
transformations whose base locus has dimension at most three.
When the base locus has dimension three, a preliminary analysis shows that
there are only three important classes of transformations (see Corollary~\ref{casiProp}):
\begin{itemize}
 \item quintic  transformations of $\PP^5$;
 \item quadratic transformations of $\PP^8$;
 \item cubic  transformations of $\PP^6$.
\end{itemize}
This situation was basically already obtained in \cite{crauder-katz-1991},
and the classification of the special Cremona transformations as in the first case
has been achieved in \cite[Theorem~3.2]{ein-shepherdbarron}.
Recently, Alzati and Sierra in \cite{alzati-sierra} 
extended 
this  result  to the case in which 
the image $\ZZ$ of the map is a prime Fano manifold.

The transformations as in the second case have been 
 classified by ourselves 
 under the hypothesis that the image $\ZZ$ is a (locally) factorial variety;
 see \cite{note,note2,note3}. In the particular case when $\ZZ$
 is a factorial complete intersection, 
 the classification is summarized in Table~\ref{table: SCTintComplQuad}.

 In \cite{note4} we initiated the study of the transformations as in the third and last case,
 by classifying the special cubic Cremona transformations of $\PP^6$,
 and hence completing the classification
 of the special Cremona transformations with three-dimensional base locus.
 This paper 
 is a continuation to \cite{note4}. Here we classify 
 special cubic birational transformations of $\PP^6$
 into a factorial complete intersection $\ZZ$, and hence we complete
 the classification of all special birational transformations 
 into a factorial complete intersection
 and whose base locus 
 has dimension at most three.
  The main result of this paper is the following:
  \begin{theorem}
   There are exactly $28 = 9 + 15 + 4$ types (listed, respectively, in Tables~\ref{table: SCTintCompl}, \ref{table: SCTintComplQuad}, and \ref{table: SCTintComplQuint})
   of special birational transformations of a projective space
   into a factorial complete intersection 
   such that the base locus has dimension at most three and the inverse map is not linear.
  \end{theorem}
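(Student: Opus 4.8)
The plan is to combine a reduction step with three separate classifications, one for each of the three families of transformations isolated in Corollary~\ref{casiProp}. First I would apply that corollary: if $\varphi\colon\PP^r\dashrightarrow\ZZ$ is a special birational transformation onto a factorial complete intersection $\ZZ$, its base locus $\B$ has dimension at most three, and $\varphi^{-1}$ is not linear, then $\varphi$ is a quintic transformation of $\PP^5$, a quadratic transformation of $\PP^8$, or a cubic transformation of $\PP^6$, and $\ZZ$ has dimension $5$, $8$, or $6$ respectively. The hypothesis that $\varphi^{-1}$ is not linear is precisely what forces the answer to be finite — without it one would have to include the infinite family of inverses of linear projections — so it remains to show that each of the three families contributes only finitely many types, namely $4$, $15$, and $9$, the entries of Tables~\ref{table: SCTintComplQuint}, \ref{table: SCTintComplQuad}, and \ref{table: SCTintCompl}.

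Two of the three families are essentially already available. For quadratic transformations of $\PP^8$, the classification into (locally) factorial targets was carried out in \cite{note,note2,note3}; restricting it to the transformations whose image is a complete intersection produces the $15$ types of Table~\ref{table: SCTintComplQuad}, and here one only reads off from the explicit descriptions which images are complete intersections and checks, case by case, that the inverse is non-linear. For quintic transformations of $\PP^5$, I would start from the classification of special quintic Cremona transformations of $\PP^5$ in \cite[Theorem~3.2]{ein-shepherdbarron} and from its extension by Alzati and Sierra \cite{alzati-sierra} to the case in which the image is a prime Fano manifold: a factorial complete intersection of dimension at least three that is birational to a projective space has Picard number one and ample anticanonical class, hence is a prime Fano variety, so their result applies and their list specializes to the $4$ types of Table~\ref{table: SCTintComplQuint} (after discarding images that fail to be complete intersections, and a routine check that the mild singularities permitted by factoriality contribute nothing new).

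The genuinely new part is the cubic case: classifying special birational transformations $\varphi\colon\PP^6\dashrightarrow\ZZ\subseteq\PP^N$ defined by a linear subsystem of the cubics through a smooth connected $\B\subseteq\PP^6$, with $\ZZ$ a $6$-dimensional factorial complete intersection; for $\ZZ=\PP^6$ this recovers the special cubic Cremona transformations of $\PP^6$ classified in \cite{note4}, and I would follow the same strategy. Blowing up $\B$ gives $\sigma\colon\widetilde X\to\PP^6$, and $\varphi$ lifts to a morphism $\tau\colon\widetilde X\to\ZZ$ with $\tau^*\O_{\ZZ}(1)=3\,\sigma^*\O_{\PP^6}(1)-E$, where $E$ is the exceptional divisor; writing the analogous relation for $\varphi^{-1}$, involving $d_2=\deg\varphi^{-1}$ and the base locus $\B'\subseteq\ZZ$ of $\varphi^{-1}$, and comparing the two expressions for $K_{\widetilde X}$ furnished by adjunction — using $K_{\PP^6}=\O_{\PP^6}(-7)$, the formula for the Cartier divisor $K_{\ZZ}$ in terms of the multidegree of $\ZZ$, and $K_{\widetilde X}=\sigma^*K_{\PP^6}+(\codim(\B,\PP^6)-1)E$ — yields numerical relations among the multidegree of $\ZZ$, the dimension, degree and sectional genus of $\B$, and the invariants of $\B'$. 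Combining these with the constraints coming from the secant behaviour of $\B$, from Castelnuovo-type bounds, and from the cohomology of $\widetilde X$ should reduce the possibilities for $(\B,\ZZ)$ to finitely many numerical types. Treating $\dim\B=1,2,3$ in turn, one then identifies $\B$ inside the classifications of smooth curves, surfaces and threefolds of small degree and sectional genus — rational normal scrolls, linear sections of del Pezzo and Mukai varieties, and the like — and for each surviving candidate decides whether the cubic transformation exists, computes its image, and checks that the image is a factorial complete intersection with non-linear inverse; this produces the $9$ types of Table~\ref{table: SCTintCompl}, and summing $9+15+4=28$ gives the theorem.

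I expect the main obstacle to be the last stage of the cubic case: eliminating the a priori infinitely many numerical types that survive the first round of inequalities and, for the finitely many that remain, both establishing existence of the transformation — often through an explicit construction of $\B$, for instance by liaison or via an ad hoc model — and, the subtlest point, proving that its image is an honest complete intersection rather than merely a factorial variety of Picard number one. That final verification requires controlling the equations of $\ZZ$, hence the rational map defined by the cubics through $\B$ and its behaviour under general hyperplane sections, and this is where most of the technical effort will be concentrated.
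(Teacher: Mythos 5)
Your reduction step misreads Corollary~\ref{casiProp} and would not produce the count $28=9+15+4$. It is not true that base locus of dimension at most three plus non-linear inverse forces $\varphi$ to be a quintic transformation of $\PP^5$, a quadratic transformation of $\PP^8$, or a cubic transformation of $\PP^6$: those are only the hardest branches occurring when $\dim\B=3$, and even then the corollary also allows a quarto-quadric transformation of $\PP^5$ and a quadro-quadric transformation of $\PP^7$ into a quadric. The $28$ types of the theorem include many entries your reduction discards: lines I--III of Table~\ref{table: SCTintCompl} are cubic transformations of $\PP^3$ and $\PP^4$ with one- and two-dimensional base loci, lines I--VII of Table~\ref{table: SCTintComplQuad} are quadratic transformations of $\PP^4$, $\PP^5$, $\PP^6$, line VIII lives in $\PP^7$, and lines I--II of Table~\ref{table: SCTintComplQuint} are \emph{quartic} transformations of $\PP^4$. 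The paper keeps every branch of Corollary~\ref{casiProp}: the low-dimensional cases are settled by \cite{katz-cubo-cubic,crauder-katz-1989} together with Lemma~\ref{cond hilbert pol}, the quartic cases by Propositions~\ref{propAS0} and \ref{propASquart} (using the Hilbert-polynomial conditions, the numerics of surfaces in $\PP^4$, and the Aure--Ranestad classification of degree-$9$ surfaces), and only the residual cases feed into the quintic, quadratic and cubic tables. Also, the non-linearity hypothesis is not what makes the answer finite: the type $(d,1)$ transformations are likewise finite in number (the $9$ types of Table~\ref{table: SCTlinear}); the hypothesis merely separates the two theorems.

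The second genuine gap is in your treatment of Table~\ref{table: SCTintComplQuint}. A factorial complete intersection is allowed to be singular (factoriality only requires the singular locus to be small, by parafactoriality), so it need not be a prime Fano \emph{manifold}, and the Alzati--Sierra classification does not apply off the shelf. Your ``routine check that the mild singularities permitted by factoriality contribute nothing new'' is exactly backwards: the paper obtains a type absent from the smooth classification, namely the quinto-cubic transformation of $\PP^5$ into a singular factorial cubic hypersurface (line IV of Table~\ref{table: SCTintComplQuint}, Example~\ref{exampleAS}, built by restricting a special cubo-quintic Cremona transformation of $\PP^6$ to a general cubic through its base locus), and this type is one of the four being counted. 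Moreover two of those four entries are quartic transformations of $\PP^4$, which no classification of quintic transformations of $\PP^5$ can yield. Your outline of the cubic case does match the paper's actual strategy (blow-up and canonical-class comparison, numerical bounds including Castelnuovo and Le~Barz-type constraints, adjunction theory to identify $\B$, and liaison or explicit models for existence), but the global reduction and the quartic/quintic branch must be repaired along the lines above before the count $9+15+4$ can be justified.
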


 The choice of our hypothesis on $\ZZ$ 
  is motivated not only by the desire of simplifying the problem, 
  but also because 
  this condition is satisfied in most of the known examples; see also Remarks~\ref{rem Fano}, \ref{rem quadratic},
  and \ref{rem quintic}.
 
 We give also results on special birational transformations whose inverse map 
 is defined by linear forms, even if these transformations  from our point of view 
 are less interesting, and 
 never occur in the case of 
  Cremona transformations. We have the following:
    \begin{theorem}
   There are exactly $9$ types (listed in Table~\ref{table: SCTlinear})
   of special birational transformations of a projective space
   into a factorial complete intersection 
   such that the base locus has dimension at most three and the inverse map is linear.
  \end{theorem}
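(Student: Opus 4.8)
To prove the theorem I would begin by translating the hypothesis ``the inverse is linear'' into geometric language. Let $\varphi\colon\PP^n\dashrightarrow\ZZ\subseteq\PP^N$ be a special birational transformation with $\ZZ$ a factorial complete intersection, let $B=\Bs(\varphi)$ be its base locus (smooth, connected, of dimension at most three), and suppose $\psi=\varphi^{-1}$ is defined by linear forms. Then $\psi$ is the restriction to $\ZZ$ of a linear projection $\PP^N\dashrightarrow\PP^n$ from a linear subspace $\Lambda$; since $\psi$ is birational, $\dim\Lambda=N-n-1$, and the base locus of $\psi$ is the linear section $B'=\ZZ\cap\Lambda$, which — as $\ZZ$ is a complete intersection, say of multidegree $(a_1,\dots,a_k)$ with $k=N-n$ and $\deg\ZZ=\prod_i a_i$ — is itself a complete intersection in $\PP^N$. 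Thus the problem becomes: classify the pairs $(\ZZ,\Lambda)$ such that $\pi_\Lambda|_\ZZ$ is birational onto $\PP^n$ and the inverse transformation $\varphi$ is special with $\dim B\le 3$.

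Next I would set up the common resolution, as in \cite{crauder-katz-1991,ein-shepherdbarron,note4}. The blow-up $\pi\colon X=\mathrm{Bl}_B\PP^n\to\PP^n$ resolves $\varphi$, and the induced morphism $f\colon X\to\ZZ$ contracts a prime divisor $E'$ onto $B'$; one checks, as in those works, that $B'$ is smooth and connected and that $f$ is the blow-up of $\ZZ$ along $B'$. Writing $E$ for the $\pi$-exceptional divisor, $h=\pi^\ast\O_{\PP^n}(1)$, $h'=f^\ast\O_\ZZ(1)$, and $d=\deg\varphi$, one has (the relevant multiplicity being $1$ in all cases under consideration)
\[
h'=d\,h-E,\qquad h=h'-E'.
\]
Combining these with the two expressions for the canonical class, $K_X=-(n+1)h+(n-\dim B-1)E=(\sum_i a_i-N-1)h'+(n-\dim B'-1)E'$, intersecting with lines in the fibres of $E$ and $E'$, and invoking the double-point and sectional-genus formulas for $B$ and $B'$, one obtains a system of numerical relations among $n$, $k$, the $a_i$, $d$, and the invariants of $B$ and $B'$.

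The main obstacle is to show that this system, under the constraint $\dim B\le 3$, has only finitely many solutions and to settle each of them. For this one combines: the bounds on $n$ and $d$ furnished by the preliminary analysis (Corollary~\ref{casiProp}) and by the classifications recalled in the introduction; the strong restrictions coming from the smoothness and positivity of $B$ and of $B'$ — the latter being a \emph{linear} complete-intersection section of $\ZZ$; the factoriality of $\ZZ$, which forces its Picard group to be generated by $\O_\ZZ(1)$ and hence controls the ideal of $\ZZ$; and, where the numerics are inconclusive, the finer projective geometry used in the proof of the first theorem — hyperplane and tangential sections, the structure of $\Sec(B)$, and the second fundamental form along $B$. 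This is the same pattern of case-by-case elimination carried out there, and guaranteeing completeness of the list is the delicate part.

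Finally, for each numerical type that survives I would exhibit a transformation realizing it: each occurs as the inverse of the linear projection of an explicit rational complete intersection $\ZZ$ — of Veronese, Segre, scroll or del Pezzo type, or a hyperplane or quadric section of such — from the span of a smooth linear section $B'$, or as the restriction to $\ZZ$ of one of the transformations classified elsewhere in the paper or in the cited references. Recording the invariants in Table~\ref{table: SCTlinear} and counting the surviving types shows that there are exactly nine, which is the assertion. \qed
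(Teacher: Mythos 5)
Your outline contains two structural missteps before it reaches the real issue. First, the base locus of the inverse, $\B'=\ZZ\cap\Lambda$ with $\dim\Lambda=N-n-1$, is \emph{not} a complete intersection: since $\dim\ZZ=n$, the expected dimension of $\ZZ\cap\Lambda$ is negative, while $\B'$ has dimension $r'\geq 0$, so the intersection is necessarily excessive. Second, the claim that ``$\B'$ is smooth and connected and $f$ is the blow-up of $\ZZ$ along $\B'$'' is neither assumed nor true in this setting: specialness constrains only the base locus of $\varphi$, and the definition used in the paper explicitly allows $\mathrm{Sing}(\ZZ)$ to be contained (strictly) in $\B'$, so $\B'$ may be singular and the resolution $X\to\ZZ$ need not be a blow-up along a smooth center. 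None of this machinery is needed: all the numerics follow from $\mathrm{Pic}(X)=\mathbb{Z}\langle H\rangle\oplus\mathbb{Z}\langle E\rangle=\mathbb{Z}\langle H'\rangle\oplus\mathbb{Z}\langle E'\rangle$, which holds because $\mathrm{Sing}(\ZZ)_{\mathrm{red}}\subsetneq\B'_{\mathrm{red}}$, exactly as in Proposition~\ref{PropDim}.

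The genuine gap is that the heart of the theorem — finiteness and the elimination down to exactly nine types — is deferred to ``the same pattern of case-by-case elimination,'' which is precisely where the content lies. The actual proof is an assembly: Proposition~\ref{PropDim} with $d_2=1$ forces $d_1\in\{2,3,4,5\}$ and determines $n$, $r$, $r'$ and the coindex $c$, hence the admissible $(\Delta,a)$ for a complete intersection; then each degree is settled by a separately proved classification: Proposition~\ref{prop quadro-linear} for $d_1=2$ (via quadratic entry locus varieties), Theorem~\ref{thm cubo-linear} for $d_1=3$, Propositions~\ref{propASquart} and \ref{prop quarto-linear} for $d_1=4$, and Theorem~\ref{propAS} for $d_1=5$. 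A key ingredient you omit entirely is the $d_2=1$ refinement of Lemma~\ref{cond hilbert pol}: when the inverse is linear, $\mathrm{Sec}_{d_1}(\B)$ is the unique hypersurface of degree $d_1-1$ containing $\B$ (Remark~\ref{rem secants hypers}), so $h^0(\PP^n,\mathcal{I}_{\B}(d_1-1))=1$, and it is these Hilbert-polynomial conditions (together with linkage and the classification literature for low-degree threefolds and surfaces) that pin down the degree and sectional genus in each row of Table~\ref{table: SCTlinear}. Without carrying out, or at least invoking, these case analyses, your plan does not establish that the list has exactly nine entries.
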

The paper is organized as follows. In Section~\ref{sec prima},
by following 
arguments from \cite{crauder-katz-1989,ein-shepherdbarron},
we provide some
general 
results on special birational transformations, 
giving an account of what is known and what is unknown 
about the problem of classification when the base locus has dimension 
at most three.
In Section~\ref{cubiche} 
we focus on the main contribution of this  paper,
by classifying special cubic birational transformations into a factorial complete intersection
and whose base locus has dimension at most three, 
see 
Corollary~\ref{cor main}.
In Section~\ref{Examples}, in order to make 
our classification effective,
we construct new examples of special cubic birational transformations,
see Examples~\ref{cubo-cubic into cubic}, \ref{cubo-quartic into quadric},  \ref{example G-1-5};
we also review some constructions in \cite{note4} such as that of
a special cubic Cremona transformation having a conic bundle as base locus, see
Example~\ref{example Cremona 13-12}. 
We provide explicit equations defined over $\mathbb{Q}$ for all our examples.
In Section~\ref{cubicFourfolds} we illustrate briefly a connection between 
the study of 
special cubic birational transformations of $\PP^6$
and the celebrated problem about the rationality of cubic fourfolds.
In Section~\ref{sec quadratic} we recall the analogous classification results for
 special quadratic transformations, and in Section~\ref{quintiche} 
we analyze the special transformations of $\PP^5$ into a complete intersection, 
which was classified in \cite{alzati-sierra} under the hypothesis 
that the image $\ZZ$ of the map is a prime Fano manifold.
Since we allow singularities on $\ZZ$, we get a new type of transformation 
with respect to their classification, which is the restriction 
to a general hyperplane of the inverse of a special cubic \emph{Pfaffian} Cremona transformation of $\PP^6$.
Finally, in Section~\ref{sec linear}
we give some results on transformations having linear inverse.

\section{General results on special birational transformations}\label{sec prima}

Let $\varphi:\PP^n\dashrightarrow\ZZ\subseteq\PP^{N}$ be a birational transformation into a
factorial projective variety
$\ZZ\subseteq\PP^N$.
Without loss of generality we can always take $\ZZ\subseteq\PP^N$ to be non-degenerate
(\emph{i.e.}, not contained in any hyperplane).
We say that $\varphi$ is \emph{special} if its base locus $\B\subset\PP^n$
is smooth and connected. 
In this case, the singular locus of $\ZZ$ 
is set-theoretically contained in the base locus of $\varphi^{-1}$, and we 
 also always require that this inclusion is strict, \emph{i.e.}
that $\ZZ$ is ``not too singular''. 

\begin{remark}\label{rem 1}
In this paper we are mainly interested in the case when $\ZZ\subseteq\PP^{n+a}$ is an
$n$-dimensional non-degenerate complete intersection,
hence it is 
an intersection of hypersurfaces of degrees $e_1,\ldots,e_a\geq 2$.
We recall here some well-known facts.

If the singular locus of $\ZZ$ has dimension less than  $n-3$, then $\ZZ$ is factorial
(cf. Grothendieck's parafactoriality theorem (\emph{Samuel's conjecture}) \cite[XI, 3.14]{sga2}).
In this case, it is normal and hence projectively normal (cf. \cite[Exercise~II 8.4]{hartshorne-ag}).
Moreover, we can assume $n\geq 3$, so that we have that $\mathrm{Cl}(\ZZ)\simeq\mathrm{Pic}(\ZZ)\simeq\mathbb{Z}\langle \mathcal{O}_{\ZZ}(1)\rangle$
(cf. \cite[Corollary~3.2]{hartshorne-ample}), and hence we also have 
that the homogeneous coordinate ring of $\ZZ$, $S(\ZZ)$, is a unique factorization domain (cf. \cite[Exercise~II 6.3]{hartshorne-ag}).

This implies that if $\varphi:\PP^n\dashrightarrow\ZZ\subseteq\PP^{n+a}$
is a (special) birational map into a factorial complete intersection $\ZZ$, then 
the inverse map of $\varphi$ is uniquely represented up to 
proportionality  by a vector $(F_0,\ldots,F_n)\in S(\ZZ)^{n+1}$ 
of forms of the same degree $d=d_2$ without common factors; in other words the degree sequence of $\varphi^{-1}$ 
has length one and consists of $d$ only, see \cite{Simis2004162}.

Finally, let us recall that if $U$ is the smooth locus of $\ZZ$, we have
$\omega_{U} =
\mathcal{O}_{U}(c - n - 1)$, where $c=\Sigma_{i=1}^{a} e_i  - a $ is the 
\emph{coindex} of $\ZZ$. 
We have $c\geq 0$; moreover,  $c=0$ if and only if $\ZZ=\PP^n$, and $c=1$ if and only if $\ZZ$ is a quadric hypersurface 
(see also \cite{kobayashi-ochiai}). 
\end{remark}

We say that a special
birational transformation $\varphi:\PP^n\dashrightarrow\ZZ$ 
is \emph{of type} $(d_1,d_2)$ 
if its multidegree has the form $(1,d_1,\ldots,d_2\,\Delta,\Delta)$, where $\Delta=\deg(\ZZ)$ and $d_2\in \mathbb{Z}$.
By Remark~\ref{rem 1}, when $\ZZ$ is a factorial complete intersection then
the type of $\varphi$ is always well-defined, \emph{i.e.} $d_2\in\mathbb{Z}$.
We shall also say that $\varphi$ is quadro-quadric, quadro-cubic, cubo-quadric, and so on to indicate that 
its type is $(2,2)$, $(2,3)$, $(3,2)$, and so on.

The following proposition 
is a simple generalization of results from \cite{ein-shepherdbarron,crauder-katz-1989}.
\begin{proposition}\label{PropDim}
 Let $\varphi:\PP^n\dashrightarrow\ZZ$ be a special birational transformation of type $(d_1,d_2)$ 
 into a factorial variety $\ZZ$. 
 Denoting by $r$ (respectively by $r'$) the dimension of the base locus of $\varphi$ (respectively of $\varphi^{-1}$), we have:
 \begin{equation}\label{formulasDim}
 r  = \frac{n d_1 d_2 - n d_2 - d_1 d_2 - d_2 - c + 2}{d_1 d_2-1},\quad  
 r' = (n - r - 1) d_1 - 2,
\end{equation}
or equivalently 
\begin{equation}\label{formulasDim2}
 d_1 = \frac{r' + 2}{n- r -1},\quad d_2 = \frac{r -c + 2}{n -r' -1} ,
\end{equation}
where $c$ is a non-negative integer defined in the proof below.
\end{proposition}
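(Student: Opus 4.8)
The plan is to resolve $\varphi$ by a single blow-up and then to compute the canonical class of the resolution in two different ways, comparing the two expressions in the Picard group.

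First I would set up the resolution. Let $\sigma\colon X\to\PP^n$ be the blow-up of $\PP^n$ along $\B$, with exceptional divisor $E$. Since $\varphi$ is special, $\B$ is smooth and connected, so $X$ is smooth and $\mathrm{Pic}(X)=\mathbb{Z}H\oplus\mathbb{Z}E$, where $H=\sigma^{*}\mathcal{O}_{\PP^n}(1)$; moreover, arguing as in \cite{crauder-katz-1989,ein-shepherdbarron}, $\sigma$ resolves the indeterminacy of $\varphi$ and the induced birational morphism $\psi:=\varphi\circ\sigma\colon X\to\ZZ$ satisfies $\psi^{*}\mathcal{O}_{\ZZ}(1)=d_1H-E=:H'$. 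As $\psi$ is birational and $\mathrm{Pic}(\ZZ)\simeq\mathbb{Z}\langle\mathcal{O}_{\ZZ}(1)\rangle$ (Remark~\ref{rem 1}), there is exactly one $\psi$-exceptional prime divisor $E'$, and $\psi(E')$ is the base locus $\B'$ of $\varphi^{-1}$, of dimension $r'$. Since $\varphi^{-1}$ is represented by forms of degree $d_2$ on $\ZZ$ (Remark~\ref{rem 1}) and $\sigma=\varphi^{-1}\circ\psi$ as rational maps, the class $H$ equals $d_2H'-mE'$ in $\mathrm{Pic}(X)$ for some integer $m\geq1$; then $mE'=d_2H'-H=(d_1d_2-1)H-d_2E$, and since this last class is primitive in $\mathbb{Z}H\oplus\mathbb{Z}E$ (because $\gcd(d_1d_2-1,d_2)=1$), we get $m=1$ and
\[
E'=(d_1d_2-1)H-d_2E .
\]

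Next I would compute $K_X$ twice. On one hand, the blow-up formula gives
\[
K_X=\sigma^{*}K_{\PP^n}+(n-r-1)E=-(n+1)H+(n-r-1)E .
\]
On the other hand, using $\mathrm{Pic}(\ZZ)\simeq\mathbb{Z}\langle\mathcal{O}_{\ZZ}(1)\rangle$ I define the integer $c$ by $\omega_{\ZZ}\simeq\mathcal{O}_{\ZZ}(c-n-1)$ on the smooth locus $U$ of $\ZZ$; this $c$ is non-negative — otherwise $-K_{\ZZ}=\mathcal{O}_{\ZZ}(n+1-c)$ with $n+1-c>n+1$ would make the index of $\ZZ$ exceed $n+1$, against Kobayashi--Ochiai \cite{kobayashi-ochiai} — and it equals the coindex of $\ZZ$ when $\ZZ$ is a complete intersection (Remark~\ref{rem 1}). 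Since $\ZZ$ is ``not too singular'', $\mathrm{Sing}(\ZZ)\subsetneq\B'$, so $\psi^{-1}(\mathrm{Sing}(\ZZ))$ has codimension at least $2$ in $X$; hence the ramification formula for $\psi$, valid over $U$, holds in $\mathrm{Pic}(X)$:
\[
K_X=\psi^{*}\omega_{\ZZ}+(n-r'-1)E'=(c-n-1)H'+(n-r'-1)E' .
\]
The coefficient of $E'$ here is the discrepancy of $\psi$ along $E'$, which equals $\codim_{\ZZ}(\B')-1=n-r'-1$: over the general point of $\B'$ — where $\ZZ$ and $\B'$ are both smooth — the morphism $\psi$ coincides with the blow-up of $\B'$, again by the local description of special transformations in \cite{crauder-katz-1989,ein-shepherdbarron}.

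Finally I would substitute the expression for $E'$ into the second formula for $K_X$, equate it with the first, and compare the coefficients of $H$ and of $E$ in $\mathrm{Pic}(X)=\mathbb{Z}H\oplus\mathbb{Z}E$. The coefficient of $E$ gives $(n-r'-1)\,d_2=r-c+2$, which is the second identity in \eqref{formulasDim2}; substituting this into the coefficient of $H$ gives $r'=(n-r-1)\,d_1-2$, the first identity in \eqref{formulasDim2}. Solving these two linear relations for $r$ and $r'$ yields the closed expressions \eqref{formulasDim}. I expect the only genuinely non-formal step to be the local analysis near the base loci $\B$ and $\B'$ underpinning the two Picard identities $\psi^{*}\mathcal{O}_{\ZZ}(1)=d_1H-E$ and $K_X=\psi^{*}\omega_{\ZZ}+(n-r'-1)E'$ — in particular the multiplicity-one and discrepancy statements; granting these, what remains is elementary linear algebra in the rank-two group $\mathrm{Pic}(X)$.
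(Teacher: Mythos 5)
Your argument is correct and follows essentially the same route as the paper's proof: resolve $\varphi$ by the single blow-up along $\B$, identify the two bases $(H,E)$ and $(H',E')$ of the rank-two Picard group with the change of basis $H'=d_1H-E$, $E'=(d_1d_2-1)H-d_2E$, write the canonical class in both bases (the $H'$-coefficient defining $c$, the $E'$-coefficient being $n-r'-1$), and compare coefficients -- exactly the scheme of \eqref{blowupPIC}, \eqref{changeBasis}, and \eqref{canonicalBlowUp}, with the same non-formal inputs (the basis change and the discrepancy $n-r'-1$ along $E'$) deferred, as in the paper, to the argument of \cite[Proposition~2.1]{ein-shepherdbarron}. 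The only cosmetic differences are your gcd/primitivity derivation of the class of $E'$ and the Kobayashi--Ochiai aside on $c\geq 0$ (the latter would need a remark since $\ZZ$ may be singular, but it is not needed for the formulas), neither of which changes the method.
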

\begin{proof}
Let $\B\subset\PP^n$ be the base locus of $\varphi$.
Denote by 
$\pi:\widetilde{\PP^n}=\mathrm{Bl}_{\B}(\PP^n)\to\PP^n$ the 
natural projection of the blow-up of $\PP^n$ along $\B$, $E=\pi^{-1}(\B)$ the exceptional divisor,
and $H$ a divisor in $|H^0(\pi^{\ast}(\O_{\PP^n}(1)))|$. 
Let also $\pi':\widetilde{\PP^n}\to\ZZ$ be the composition of $\pi$ and $\varphi$, 
 $\B'\subset\ZZ$ the base locus scheme of $\varphi^{-1}$, $E'=\pi'^{-1}(\B')$, and $H'$
 a divisor in $|H^0(\pi'^{\ast}(\O_{\ZZ}(1)))|$. 
 Since $\mathrm{Sing}(\ZZ)_{\mathrm{red}}\subsetneq\B'_{\mathrm{red}}$, 
 from the same argument used in the proof of \cite[Proposition~2.1]{ein-shepherdbarron} (see also \cite[Proposition~4.1]{note}),
 we deduce that
 \begin{equation}\label{blowupPIC}
 \mathrm{Pic}(\widetilde{\PP^n})=\mathbb{Z}\langle H\rangle \oplus \mathbb{Z}\langle E \rangle 
 = \mathbb{Z}\langle H'\rangle \oplus \mathbb{Z}\langle E' \rangle,
 \end{equation}
and we have the change of basis formulas:
 \begin{equation}\label{changeBasis}
 \bgroup\begin{pmatrix}
     H'\\
     E'\\
     \end{pmatrix}\egroup
 =
 \bgroup\begin{pmatrix}d_1&
     -1\\
     d_1 d_2 -1&
     -d_2\\
     \end{pmatrix}\egroup
     \bgroup\begin{pmatrix}
     H\\
     E\\
     \end{pmatrix}\egroup .
 \end{equation}
 Let us notice that 
 the restriction of $\pi'$ induces an isomorphism 
 of $\widetilde{\PP^n}\setminus E'$ onto $\ZZ\setminus \B'$
 and that $\mathrm{codim}_{\ZZ}(\B')\geq 2$.
 Thus we have
$\mathrm{Pic}(\ZZ) \simeq \mathrm{Pic}(\ZZ\setminus \B') \simeq \mathrm{Pic}(\widetilde{\PP^n}\setminus E')
 \simeq \mathbb{Z}\langle \mathcal{O}_{\widetilde{\mathbb{P}^n}}(H') \rangle $, and we can define 
 the integer $c=c(\ZZ)$ such that
 $\omega_{\ZZ\setminus\B'} \simeq \mathcal{O}_{\ZZ\setminus\B'}(c-n-1)$.
Using this invariant we can write
 the canonical divisor of $\widetilde{\PP^n}$ 
in two different ways (see again the argument in the proof of \cite[Proposition~2.1]{ein-shepherdbarron}):
\begin{equation}\label{canonicalBlowUp}
 K_{\widetilde{\PP^n}} = (-n-1) H + (n- r -1) E = (c-n-1) H' + (n- r' -1) E' .
\end{equation}
The formulas \eqref{formulasDim} and \eqref{formulasDim2} follow directly from
 \eqref{blowupPIC}, \eqref{changeBasis}, and \eqref{canonicalBlowUp}.
\end{proof}

The following result extends that of
\cite[Corollary~1]{crauder-katz-1991}, 
and in the case when $\ZZ$ is a prime Fano manifold  it is a consequence 
of  \cite[Propositions~5, 8, 11]{alzati-sierra}.
Its proof follows immediately  from Proposition~\ref{PropDim}
by taking into account that the dimensions $r$ and $r'$ are at most $n-2$.
See also Table~\ref{table: preliminaryClass}.

\begin{table}[htbp]
\centering
\begin{tabular}{lllllll}
 \hline 
 Case & $n$ & $r$  & $r'$ & $d_1$ & $d_2$ & $c$ \\  
 \hline 
 \eqref{nonquad1} & $3$ & $1$ & $1$ & $3$ & $2$ & $1$ \\
 \eqref{cre1} & $3$ & $1$ & $1$ & $3$ & $3$ & $0$ \\
 \eqref{quad1} & $4$ & $1$ & $2$ & $2$ & $2$ & $1$ \\
 \eqref{cre2} & $4$ & $1$ & $2$ & $2$ & $3$ & $0$ \\
 \hline 
 \eqref{cre3} & $4$ & $2$ & $1$ & $3$ & $2$ & $0$ \\
 \eqref{nonquad2} & $4$ & $2$ & $2$ & $4$ & $2$ & $2$ \\
\eqref{nonquad2} & $4$ & $2$ & $2$ & $4$ & $3$ & $1$ \\
 \eqref{cre4}  & $4$ & $2$ & $2$ & $4$ & $4$ & $0$ \\
\eqref{cre5} & $5$ & $2$ & $2$ & $2$ & $2$ & $0$ \\
 \eqref{quad2} & $6$ & $2$ & $4$ & $2$ & $2$ & $2$ \\
  \eqref{quad2} & $6$ & $2$ & $4$ & $2$ & $3$ & $1$ \\
  \eqref{cre6} & $6$ & $2$ & $4$ & $2$ & $4$ & $0$ \\
  \hline 
\eqref{caso3propA} & $5$ & $3$ & $2$ & $4$ & $2$ & $1$ \\
 \eqref{caso3propANcre} & $5$ & $3$ & $3$ & $5$ & $2$ & $3$ \\
 \eqref{caso3propANcre} & $5$ & $3$ & $3$ & $5$ & $3$ & $2$ \\
 \eqref{caso3propANcre} & $5$ & $3$ & $3$ & $5$ & $4$ & $1$ \\
  \eqref{caso3propAcre} & $5$ & $3$ & $3$ & $5$ & $5$ & $0$ \\
   \eqref{caso3propBNcre} & $6$ & $3$ & $4$ & $3$ & $2$ & $3$ \\
   \eqref{caso3propBNcre} & $6$ & $3$ & $4$ & $3$ & $3$ & $2$ \\
   \eqref{caso3propBNcre} & $6$ & $3$ & $4$ & $3$ & $4$ & $1$ \\
    \eqref{caso3propBcre} & $6$ & $3$ & $4$ & $3$ & $5$ & $0$ \\
     \eqref{caso3propC} & $7$ & $3$ & $4$ & $2$ & $2$ & $1$ \\
      \eqref{caso3propDNcre} & $8$ & $3$ & $6$ & $2$ & $2$ & $3$ \\
      \eqref{caso3propDNcre} & $8$ & $3$ & $6$ & $2$ & $3$ & $2$ \\
      \eqref{caso3propDNcre} & $8$ & $3$ & $6$ & $2$ & $4$ & $1$ \\
      \eqref{caso3propDcre}  & $8$ & $3$ & $6$ & $2$ & $5$ & $0$ \\
      \hline 
\end{tabular}
 \caption{Preliminary classification of special birational transformations 
 of type $(d_1,d_2)$ into a factorial variety, with $d_2>1$ and  $r\leq 3$
 (the notation is as in Proposition~\ref{PropDim}).} 
\label{table: preliminaryClass} 
\end{table}

\begin{corollary}\label{casiProp}
 Let $\varphi:\PP^n\dashrightarrow\ZZ$ be a special birational transformation of type $(d_1,d_2)$, with $d_2 > 1$,
 into a factorial variety $\ZZ$.
\begin{enumerate}
 \item If the base locus of $\varphi$ has dimension one, then one of the following holds:
 \begin{enumerate}
  \item\label{nonquad1} $n=3$ and $\varphi$ is a cubo-quadric transformation into a quadric hypersurface;
  \item\label{cre1} $n=3$ and $\varphi$ is a cubo-cubic Cremona transformation;
  \item\label{quad1} $n=4$ and $\varphi$ is a quadro-quadric transformation into a quadric hypersurface;
  \item\label{cre2} $n=4$ and $\varphi$ is a quadro-cubic Cremona transformation. 
 \end{enumerate}
\item If the base locus of $\varphi$ has dimension two, then one of the following holds:
 \begin{enumerate}
  \item\label{cre3} $n=4$ and $\varphi$ is a cubo-quadric Cremona transformation;
  \item\label{nonquad2} $n=4$ and $\varphi$ is a quartic transformation with $(d_2,c)\in\{(2,2),(3,1)\}$;
  \item\label{cre4} $n=4$ and $\varphi$ is a quarto-quartic Cremona transformation;
  \item\label{cre5} $n=5$ and $\varphi$ is a quadro-quadric Cremona transformation;
  \item\label{quad2} $n=6$ and $\varphi$ is a quadratic transformation with  $(d_2,c)\in\{(2,2),(3,1)\}$;
  \item\label{cre6} $n=6$ and $\varphi$ is a quadro-quartic Cremona transformation.
\end{enumerate}
 \item If the base locus of $\varphi$ has dimension three, then one of the following holds:
  \begin{enumerate}
  \item\label{caso3propA} $n=5$ and $\varphi$ is a quarto-quadric transformation into a quadric hypersurface;
  \item\label{caso3propANcre} $n=5$ and $\varphi$ is a quintic transformation with  $(d_2,c)\in\{(2,3),(3,2),(4,1)\}$;
  \item\label{caso3propAcre} $n=5$ and $\varphi$ is a quinto-quintic Cremona transformation;
    \item\label{caso3propBNcre} $n=6$ and $\varphi$ is a cubic transformation with  $(d_2,c)\in \{(2,3),(3,2),(4,1)\}$;
 \item\label{caso3propBcre} $n=6$ and $\varphi$ is a cubo-quintic Cremona transformation;
 \item\label{caso3propC} $n=7$ and $\varphi$ is a quadro-quadric transformation into a quadric hypersurface;
   \item\label{caso3propDNcre} $n=8$ and $\varphi$ is a quadratic transformation with $(d_2,c)\in\{(2,3),(3,2),(4,1)\}$;
\item\label{caso3propDcre} $n=8$ and $\varphi$ is a quadro-quintic Cremona transformation.
   \end{enumerate}
   \end{enumerate}
\end{corollary}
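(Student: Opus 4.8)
The plan is to reduce the statement to a short finite enumeration of numerical invariants, using nothing beyond Proposition~\ref{PropDim}; this is the computation sketched just above the statement. Throughout, $r=\dim\B\in\{1,2,3\}$ is fixed (these are the three cases (1), (2), (3)), and I would first record the constraints at our disposal: $r\le n-2$ and $0\le r'\le n-2$ (the base loci of $\varphi$ and of $\varphi^{-1}$ have codimension at least two in $\PP^n$ and in $\ZZ$, and $\B'\neq\emptyset$ since $\mathrm{Sing}(\ZZ)\subsetneq\B'$); $d_1$ is a positive integer (as $\varphi$ is given by a nonconstant system of forms); $d_2$ is an integer with $d_2\ge 2$ (by hypothesis and since $\varphi$ is of type $(d_1,d_2)$); and $c\ge 0$ (Proposition~\ref{PropDim}). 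Writing $\ell=n-r'-1\ge 1$ and $m=n-r-1\ge 1$, the identities \eqref{formulasDim2} become
\begin{equation*}
 d_2\,\ell=r+2-c,\qquad d_1\,m=r'+2=m+(r+2-\ell),\quad\text{so}\quad d_1=1+\frac{r+2-\ell}{m}.
\end{equation*}

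The finiteness is then immediate. From $d_2\ge 2$ and $c\ge 0$ the first identity gives $2\le d_2\ell\le r+2$, hence $\ell\le(r+2)/2$, so $\ell\in\{1,2\}$ (with $\ell=1$ forced when $r=1$). Since $1\le r+2-\ell$, integrality of $d_1$ forces $m\mid(r+2-\ell)$, hence $m\le r+2-\ell$ and therefore $n=m+r+1\le 2r+3-\ell$; in particular $d_1\ge 2$ automatically. So for each $r\in\{1,2,3\}$ only finitely many pairs $(n,\ell)$ survive, and for each of them the divisors $m$ of $r+2-\ell$ determine $n$, $d_1$ and $r'=n-1-\ell$, while the admissible pairs $(d_2,c)$ are precisely those with $c\ge 0$, $d_2\ge 2$ and $d_2\ell=r+2-c$.

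Running through the survivors is routine. For $r=1$ one must have $\ell=1$ and $m\in\{1,2\}$, giving $(n,d_1,r')\in\{(3,3,1),(4,2,2)\}$ and in both cases $(d_2,c)\in\{(2,1),(3,0)\}$ — the first four rows of Table~\ref{table: preliminaryClass}. The cases $r=2$ ($\ell\in\{1,2\}$; $8$ rows) and $r=3$ ($\ell\in\{1,2\}$; $14$ rows) are handled in exactly the same way, and altogether one obtains precisely the $4+8+14=26$ numerical types listed in Table~\ref{table: preliminaryClass}. Finally I would translate these data into the statement: $d_1$ supplies the name of $\varphi$ (quadratic, cubic, quartic, quintic) and $d_2$ the second entry of its type; moreover, by the facts recalled in Remark~\ref{rem 1}, $c=0$ forces $\ZZ=\PP^n$ (so $\varphi$ is a Cremona transformation) and $c=1$ forces $\ZZ$ to be a quadric hypersurface. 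Regrouping the rows of the table accordingly produces exactly the lists (1)(a)--(d), (2)(a)--(f) and (3)(a)--(h).

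There is no serious obstacle here: once Proposition~\ref{PropDim} is available, the proof is pure bookkeeping. The only step that needs a moment of care is the extraction of the two bounds $\ell\le 2$ and $n\le 2r+3-\ell$, which are precisely what turns an a priori unbounded search into a short finite one; everything afterwards is mechanical. One should, of course, keep in mind the standing assumption that $\ZZ$ is factorial and ``not too singular'', since it is this that yields $\mathrm{Pic}(\ZZ)\simeq\mathbb{Z}$ and hence the well-definedness of $d_1$, $d_2$, $c$ used throughout.
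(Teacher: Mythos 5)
Your proposal is correct and is essentially the paper's own proof: the paper likewise deduces the statement directly from Proposition~\ref{PropDim} together with the bounds $r,r'\le n-2$, the resulting finite enumeration being exactly the $4+8+14$ rows of Table~\ref{table: preliminaryClass}, which you reproduce (with the bookkeeping via $\ell,m$ spelled out more explicitly). The only point stated loosely is the final translation ``$c=0\Rightarrow\ZZ=\PP^n$, $c=1\Rightarrow\ZZ$ a quadric'' via Remark~\ref{rem 1}, which is formulated there for complete intersections and in general rests on a Kobayashi--Ochiai-type fact; the paper relies on the same identification, so this is not a deviation from its argument.
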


\begin{remark}\label{analisi}
The special Cremona transformations having base locus of dimension at most two 
have been classified in \cite{crauder-katz-1989} (see also \cite{katz-cubo-cubic,crauder-katz-1991,hulek-katz-schreyer}).
So we have
 a classification of the transformations as in the cases 
 \eqref{cre1}, \eqref{cre2}, \eqref{cre3}, \eqref{cre4},  \eqref{cre5}, and  \eqref{cre6}
 of Corollary~\ref{casiProp}.
 Moreover, 
 the classification of the transformations as in case \eqref{caso3propAcre} 
 follows from  \cite[Theorem~3.2]{ein-shepherdbarron}.
 
The special quadratic birational transformations having 
 base locus of dimension at most three
have been classified in \cite{note,note2,note3} 
(the final classification is summarized in \cite[Table~1]{note3}; see also Table~\ref{table: SCTintComplQuad} below).
So, we also have a classification of the transformations as in the cases 
\eqref{quad1}, \eqref{quad2}, \eqref{caso3propC}, \eqref{caso3propDNcre}, and \eqref{caso3propDcre}.

The cases \eqref{nonquad1}, \eqref{nonquad2}, and \eqref{caso3propA} are 
 simpler to handle
(see the proofs of Corollary~\ref{cor main} 
and  Propositions~\ref{propAS0} and \ref{propASquart}),
and 
furthermore, under the assumption that the image $\ZZ$ is a smooth prime Fano variety,
 the transformations as in case \eqref{caso3propANcre}
have been classified in \cite{alzati-sierra}. 
It remains to consider the cases \eqref{caso3propBNcre} and \eqref{caso3propBcre}, 
which, as it also follows a posteriori from the examples, are the most intricate ones.

Recently, in \cite[Theorem~0.5]{note4} we classified the transformations as in  case \eqref{caso3propBcre},
and in the proof of \cite[Theorem~0.6]{note4} we studied the transformations 
as in a particular type of case \eqref{caso3propBNcre} with $(d_2,c)=(3,2)$,
that is \emph{cubo-cubic transformations into a cubic hypersurface}.
In Section~\ref{cubiche}  
we follow the approach used 
 in \cite{note4} to study 
more transformations as in case \eqref{caso3propBNcre}, namely when 
 $\ZZ$ is a factorial complete intersection.
This will allow us to obtain a complete effective classification of all special birational transformations 
into a factorial complete intersection whose base locus has dimension at most three.
\end{remark}

\begin{remark}\label{rem secants hypers}
Using the arguments of \cite[Proposition~2.3]{ein-shepherdbarron},
it is not difficult to show that 
if $\B\subset\PP^n$ is the base locus of a special birational transformation $\varphi:\PP^n\dashrightarrow\ZZ$
of type $(d_1,d_2)$ into a factorial variety $\ZZ$, then 
the $d_1$-\emph{th} secant variety of $\B$, $\mathrm{Sec}_{d_1}(\B)$,
\emph{i.e.} the closure of the union of all the $d_1$-secant lines of $\B$,
is a hypersurface in $\PP^n$ of degree $d_1 d_2 -1$; moreover,
if $p\in\mathrm{Sec}_{d_1}(\B)$ is a general point, then the 
union of all the $d_1$-secants of $\B$ through $p$ is an $(n-r'-1)$-dimensional linear space $L_p$ and  
 $L_p\cap\B\subset L_p$ in a hypersurface of degree $d_1$, where $r'$ denotes the dimension of the base locus of $\varphi^{-1}$.
However, we will not need this geometric property in the sequel, except in the case when $d_2=1$.
Indeed notice that
if $d_2=1$ then the above property implies that 
 $\mathrm{Sec}_{d_1}(\B)$ is the unique hypersurface of degree $d_1-1$ containing $\B$;
 in particular $h^0(\PP^n,\mathcal{I}_{\B}(d_1-1)) = 1$. 
\end{remark}

 \begin{remark} 
We recall here 
 a special case of 
the well-known relationship 
between 
the multidegree of a rational map between projective varieties
and the 
push-forward to projective space of the Segre class
of its base locus; see  
\cite[Proposition~4.4]{fulton-intersection},
\cite[p.~291]{crauder-katz-1989}, and
\cite[Subsection~2.3]{dolgachev-cremonas}.

Let $\varphi:\PP^n\dashrightarrow\ZZ$ be a special birational transformation of type $(d_1,d_2)$.
  Let $\B$ denote its base locus, $r=\dim (\B)$,  and 
    let $s_i(\N_{\B,\PP^n})\,  H_{\B}^{r-i}$ be the degree of 
  the $i$-th Segre class of the normal bundle of $\B$. 
  If $(\delta_0,\delta_1,\ldots,\delta_{n-1},\delta_n) = (1,d_1,\ldots,d_2\,\Delta,\Delta)$
  is the multidegree of $\varphi$, 
  then the following formula holds for $k=0,\ldots,n$:  
\begin{equation}\label{segresDegs}
\delta_{n-k} = d_1^{n-k}-  \binom{n-k}{r-k}\,d_1^{r-k}\,\deg(\B)  -\sum_{i=k}^{r-1}\binom{n-k}{i-k}\,d_1^{i-k}\,s_{r-i}(\N_{\B,\PP^n}) \,  H_{\B}^{i} .
     \end{equation} 
\end{remark}

The following general result 
puts restrictions on the Hilbert polynomial of the base locus of a special birational  transformation
(see also \cite[Proposition~4.4]{note} and \cite[Corollary~2.12]{vermeire}).
\begin{lemma}\label{cond hilbert pol}
  Let $\varphi:\PP^n\dashrightarrow\ZZ\subseteq\PP^N$ be 
  a special birational transformation of type $(d_1,d_2)$ 
  into a non-degenerate and linearly normal factorial variety $\ZZ$.
  Then we have
  \begin{equation}\label{formuleHilbPol}
\chi(\O_{\B}(d_1))=\binom{n+d_1}{d_1}-N-1\quad \mbox{and}\quad \chi(\O_{\B}(d_1-1))=\binom{n+d_1-1}{d_1-1}  + \Big\lceil \frac{d_2-1}{d_2} \Big\rceil -1.
\end{equation}
Moreover, if the dimension $r'$ of the base locus of $\varphi^{-1}$ is at most $n-3$, we also have:
 \begin{equation}\label{formuleHilbPol2}
 \chi(\O_{\B}(d_1-j))= \binom{n+d_1-j}{d_1-j}, \mbox{ whenever } 2 \leq j \leq n - r' -1.
\end{equation}
\end{lemma}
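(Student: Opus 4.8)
The plan is to extract all three formulas from the blow-up picture established in the proof of Proposition~\ref{PropDim}, by pushing forward line bundles and comparing Euler characteristics on $\widetilde{\PP^n}$, $\PP^n$, and $\ZZ$. Keep the notation $\pi,\pi',E,E',H,H'$ from that proof. The starting point is that $\varphi^{-1}$ is defined by the linear system $|d_2 H' - E'|$ on $\widetilde{\PP^n}$ (equivalently, by forms of degree $d_2$ on $\ZZ$ vanishing on $\B'$), and dually $\varphi$ itself corresponds to $|d_1 H - E|$ on $\widetilde{\PP^n}$; the change-of-basis formula \eqref{changeBasis} is exactly the translation between these. From $H' = d_1 H - E$ and the fact that $\pi'_\ast \O_{\widetilde{\PP^n}} = \O_{\ZZ}$ (since $\ZZ$ is normal and $\pi'$ is birational with connected fibres), one gets $H^0(\widetilde{\PP^n},\, j H') = H^0(\ZZ,\O_{\ZZ}(j))$ for every $j \geq 0$, while $\pi_\ast \O_{\widetilde{\PP^n}}(dH - E) = \I_{\B}(d)$ gives $H^0(\widetilde{\PP^n},\, dH - E) = H^0(\PP^n,\I_{\B}(d))$.

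The first formula comes from $j=1$: $h^0(\widetilde{\PP^n}, H') = h^0(\ZZ,\O_{\ZZ}(1)) = N+1$ because $\ZZ \subseteq \PP^N$ is non-degenerate and linearly normal. On the other hand $H' = d_1 H - E$, so $h^0(\widetilde{\PP^n},H') = h^0(\PP^n, \I_{\B}(d_1))$. One then needs the two vanishings $h^i(\PP^n,\I_{\B}(d_1)) = 0$ for $i \geq 1$, equivalently $h^i(\PP^n,\O_{\PP^n}(d_1)) \to h^i(\B,\O_{\B}(d_1))$ being an isomorphism (for $i=0$ surjective): this is where the hypothesis that $\B$ is cut out scheme-theoretically by the system of $\varphi$ and the standard Castelnuovo-type regularity of the base scheme of a birational map enters — the linear system $|d_1 H - E|$ is base-point free on $\widetilde{\PP^n}$ and big, so Kawamata--Viehweg on $\widetilde{\PP^n}$ kills the higher cohomology of $d_1 H - E - (\text{something})$; more directly one cites that $\B$ is projectively normal and $h^i(\I_\B(d_1))=0$ for $i>0$, which is part of the standard package for special birational maps (cf. the arguments of \cite{crauder-katz-1989,ein-shepherdbarron} already invoked). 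Granting this, $\chi(\O_{\B}(d_1)) = h^0(\O_{\B}(d_1)) = h^0(\O_{\PP^n}(d_1)) - h^0(\I_{\B}(d_1)) = \binom{n+d_1}{d_1} - (N+1)$, which is the first equation in \eqref{formuleHilbPol}.

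For the remaining formulas, run the same computation with the twist $d_1 - j$, $j \geq 1$. Here $H^0(\PP^n,\I_{\B}(d_1-j))$ is governed by $H^0(\widetilde{\PP^n}, (d_1-j)H - E)$, and one rewrites $(d_1-j)H - E$ in the primed basis using the inverse of \eqref{changeBasis}: one finds $(d_1-j)H-E = $ a combination of $H'$ and $E'$ whose $E'$-coefficient controls how many times $\B'$ is imposed. The key identity is that the number of linearly independent degree-$(d_1-j)$ forms on $\PP^n$ vanishing on $\B$ equals the number of degree-$?$ forms on $\ZZ$ vanishing to the appropriate order along $\B'$, and for $j$ in the range $2 \le j \le n-r'-1$ the relevant negative multiple of $E'$ forces this to be $0$ — geometrically, $d_1-j$ is too small for $\mathrm{Sec}_{d_1}(\B)$ (which has degree $d_1 d_2 - 1$, by Remark~\ref{rem secants hypers}) to contribute. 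Combined again with the vanishing $h^i(\I_{\B}(d_1-j))=0$ for $i>0$ in this range (same regularity input), this yields $\chi(\O_{\B}(d_1-j)) = h^0(\O_{\PP^n}(d_1-j)) = \binom{n+d_1-j}{d_1-j}$, which is \eqref{formuleHilbPol2}. The case $j=1$ is the borderline one: now $H^0(\I_{\B}(d_1-1))$ can be either $0$ or $1$ depending on whether $\mathrm{Sec}_{d_1}(\B)$ is a hypersurface of degree exactly $d_1-1$, i.e. whether $d_1 d_2 - 1 = d_1 - 1$, i.e. whether $d_2 = 1$; this is precisely the content of the last line of Remark~\ref{rem secants hypers}, and it produces the correction term $\lceil (d_2-1)/d_2 \rceil = \lceil 1 - 1/d_2 \rceil$, which is $0$ for $d_2=1$ and $1$ for $d_2 \geq 2$. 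Hence $\chi(\O_{\B}(d_1-1)) = \binom{n+d_1-1}{d_1-1} - h^0(\I_{\B}(d_1-1)) = \binom{n+d_1-1}{d_1-1} + \lceil (d_2-1)/d_2 \rceil - 1$, the second equation in \eqref{formuleHilbPol}.

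The main obstacle is the cohomology-vanishing input $h^i(\PP^n,\I_{\B}(d_1-j)) = 0$ for $i>0$ (and the surjectivity of $H^0(\O_{\PP^n}(d_1)) \to H^0(\O_{\B}(d_1))$): without it one only controls $h^0(\I_\B)$ and not $\chi(\O_\B)$. I expect this to follow from the projective normality of $\B$ together with the fact that the ideal sheaf $\I_\B$ is globally generated in degree $d_1$ and that the Koszul/linkage structure coming from the resolution of the indeterminacy of $\varphi$ forces $(d_1)$-regularity of $\B$ — exactly the kind of estimate carried out in \cite[Section~2]{ein-shepherdbarron} and \cite[Section~4]{note}, to which the statement explicitly defers. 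Once that regularity is in hand, everything else is bookkeeping with \eqref{changeBasis} and Euler characteristics.
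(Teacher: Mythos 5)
Your overall architecture coincides with the paper's: the blow-up picture, the identification $h^0(\PP^n,\I_{\B}(d_1)) = h^0(\widetilde{\PP^n},H') = h^0(\ZZ,\O_{\ZZ}(1)) = N+1$ via the projection formula and Zariski's Main Theorem, the value of $h^0(\I_{\B}(d_1-1))$ from Remark~\ref{rem secants hypers}, and the ideal-sheaf sequence to convert $h^0(\I_{\B}(t))$ into $\chi(\O_{\B}(t))$. The genuine gap sits in the one step that carries all the content of the lemma: the vanishing $h^i(\PP^n,\I_{\B}(d_1-j))=0$ for $i>0$ in the precise range $0\leq j\leq n-r'-1$. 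You leave this as an expectation (``$(d_1)$-regularity of $\B$'', projective normality, deferred to \cite{ein-shepherdbarron,note}), but that substitute does not deliver the statement: $d_1$-regularity only gives $h^i(\I_{\B}(t))=0$ for $t\geq d_1-i$, so for a twist $d_1-j$ with $j\geq 2$ it says nothing about $h^i$ with $i<j$; and no appeal to regularity alone can produce the bound $j\leq n-r'-1$, which is exactly where the dimension $r'$ of the base locus of $\varphi^{-1}$ enters. The paper proves the vanishing directly on $\widetilde{\PP^n}$: using \eqref{changeBasis} and \eqref{canonicalBlowUp} one writes, for every $t$,
\[
tH-E \;=\; K_{\widetilde{\PP^n}} + \bigl(t-((n-r)d_1-n-1)\bigr)\,H + (n-r)\,H',
\]
and for $t\geq (n-r)d_1-n-1$ the middle summand is nef and $(n-r)H'$ is nef and big, so Kawamata--Viehweg gives $h^i(\widetilde{\PP^n},tH-E)=h^i(\PP^n,\I_{\B}(t))=0$ for $i>0$; substituting $r'=(n-r-1)d_1-2$ from \eqref{formulasDim} turns the threshold into $n-r'\geq d_1+1-t$, i.e.\ exactly $j\leq n-r'-1$ for $t=d_1-j$. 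Your sentence that Kawamata--Viehweg ``kills the higher cohomology of $d_1H-E-(\text{something})$'' gestures at this but never identifies the decomposition, and without it the range in \eqref{formuleHilbPol2} is unproved.

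A secondary, fixable point: your explanation of $h^0(\I_{\B}(d_1-j))=0$ for $j\geq 2$ (``the negative multiple of $E'$ forces this to be $0$'', ``too small for $\mathrm{Sec}_{d_1}(\B)$ to contribute'') is muddled. The clean argument is that multiplication by linear forms is injective on $H^0(\I_{\B}(d_1-j))$, so a nonzero section in degree $d_1-j$ with $j\geq 2$ would produce at least two independent sections in degree $d_1-1$, contradicting $h^0(\I_{\B}(d_1-1))\leq 1$, which you correctly extracted from Remark~\ref{rem secants hypers}. With the vanishing supplied as above, the rest of your bookkeeping (including the $\lceil (d_2-1)/d_2\rceil$ correction at $t=d_1-1$) matches the paper's proof.
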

\begin{proof} 
Let the notation be
as in the beginning of the proof of Proposition~\ref{PropDim}.
If $V$ 
denotes
the 
$(N+1)$-dimensional 
 vector space associated to the linear system defining $\varphi$, then
 we have:  
\begin{align*}
 V &\subseteq H^0(\PP^n,\I_{\B}(d_1)) = H^0(\widetilde{\PP^n},d_1 H - E) = H^0(\widetilde{\PP^n},H') 
 = H^0(\ZZ,\pi'_{\ast}(\mathcal{O}_{\widetilde{\PP^n}})\otimes \mathcal{O}_{\ZZ}(1)) \\ & = H^0(\ZZ,\mathcal{O}_{\ZZ}(1)),
\end{align*}
where the last two equalities 
follow respectively 
 from the  projection formula 
 \cite[Exercise~II~5.1]{hartshorne-ag}
and from the 
  Zariski's Main Theorem \cite[Corollary~III~11.4]{hartshorne-ag}.
Now, from the non-degeneracy and linear normality of $\ZZ$,
 we get
$h^0(\ZZ,\mathcal{O}_{\ZZ}(1))=N+1$, and therefore
we 
have 
\begin{equation} \label{formula h0 d1}
h^0(\PP^n,\I_{\B}(d_1)) = N+1 .
              \end{equation}
Moreover, if $d_2>1$ then we clearly have $h^0(\PP^n,\mathcal{I}_{\B}(d_1-1)) = 0$, while
if $d_2=1$ 
then it follows from Remark~\ref{rem secants hypers}
that $h^0(\PP^n,\mathcal{I}_{\B}(d_1-1)) = 1$; thus, in formulas, we have
 \begin{equation}\label{formula h0 d1-1} 
  h^0(\PP^n,\mathcal{I}_{\B}(d_1-1)) = 1 - \Big\lceil \frac{d_2-1}{d_2} \Big\rceil .
 \end{equation}
Let now $r=\dim \B$ and $K=K_{\widetilde{\PP^n}}$. 
Using \eqref{changeBasis} and \eqref{canonicalBlowUp}, we can write for each $t\in\mathbb{Z}$:
\begin{equation*}
t H-E = K+(t H-E-K) 
= K  + (t - ((n-r) d_1 - n - 1) ) H + (n-r) H'.
\end{equation*}
For $t \geq (n-r) d_1 - n - 1$, we have that $(t - ((n-r) d_1 - n - 1) ) H$ is nef, 
$(n-r) H'$ is nef and big,  and 
therefore their sum is nef and big (see e.g. \cite[p.~20]{debarre}). Thus 
from the 
Kawamata-Viehweg vanishing 
theorem (see e.g. \cite[Corollary~4.13]{fujita-polarizedvarieties})
we deduce the following: (Let us notice that this 
follows from
\cite[Proposition~1]{bertram-ein-lazarsfeld} when $t\geq (n-r) d_1 - n$; see also the beginning of the proof of
\cite[Theorem~1.2]{bertram-ein-lazarsfeld}.)
\begin{equation}\label{formula: KV}
 h^i(\PP^n,\I_{\B}(t)) = h^i(\widetilde{\PP^n},t H-E) = 0,\mbox{ whenever } i > 0 \mbox{ and } t\geq (n-r) d_1 - n - 1 .
\end{equation}
Now, by \eqref{formulasDim} and \eqref{formula: KV}, using
 the exact sequence
$0\rightarrow \mathcal{I}_{\B} \rightarrow \mathcal{O}_{\PP^n}\rightarrow \O_{\B}\rightarrow 0 $, we obtain:
\begin{equation}\label{formula: KV2}
 \chi(\mathcal{O}_{\B}(t)) = h^0(\PP^n, \mathcal{O}_{\PP^n}(t)) - h^0(\PP^n,\mathcal{I}_{\B}(t)) , \mbox{ whenever }  n - r' \geq d_1 + 1 - t,
\end{equation}
and the assertion follows by \eqref{formula h0 d1} and \eqref{formula h0 d1-1}. 
 \end{proof}
  
 \section{Special cubic birational transformations} \label{cubiche}
 In this section we prove our main result, that is Theorem~\ref{thm main}. 
We first need to show that there are a finite number of possible cases to be analyzed.
 
 \begin{lemma}\label{numericalInvariants}
  Let $\varphi:\PP^6\dashrightarrow\ZZ\subseteq\PP^{6+a}$ be a special birational transformation 
  of type $(3,d)$  into a non-degenerate, linearly normal, factorial variety $\ZZ\subseteq\PP^{n+a}$ of degree $\Delta$ and codimension $a$. 
  Let $\B\subset\PP^6$ be the base locus of $\varphi$ 
  and assume that it is three-dimensional (which is automatic when $d\neq 1$).
  Let us define  $\epsilon(d)=1$ if $d=1$ and $\epsilon(d) = 0$ otherwise,
  and denote by $\lambda$ and $g$, respectively,
 the  degree and sectional genus of $\B$. Then the following hold:
 \begin{enumerate}
  \item The multidegree of $\varphi$ is given by $(1, 3, 9, -{\lambda}+27, -7 {\lambda}+2 g+79, {\Delta}\, d, {\Delta}) $.
  \item If $K_{\B}$ and $H_{\B}$ denote, respectively, a canonical divisor and a hyperplane section divisor of $\B$, then
  we have:
  \begin{align*}
  K_{\B} H_{\B}^2 &= -2\,{\lambda}+2\,g-2 , \quad 
  K_{\B}^2 H_{\B} = -39\,{\lambda}+14\,g+{\Delta}\,d-12\,a  + 12\,\epsilon(d) + 331 , \\
  K_{\B}^3 &= {\lambda}^2-77\,{\lambda}+14\,g-3\,{\Delta}\,d-{\Delta}-12\,a  + 60\,\epsilon(d)  +688 .
  \end{align*}
  \item  If $\mathcal{T}_{\B}$ denotes the tangent bundle of $\B$ then we have:
    \begin{align*}
    c_1(\mathcal{T}_{\B}) H_{\B}^2 &=  2\,{\lambda}-2\,g+2 , \quad  
    c_2(\mathcal{T}_{\B}) H_{\B} = -29\,{\lambda}+16\,g-{\Delta}\,d+227 , \\ 
    c_3(\mathcal{T}_{\B}) &= 230\,{\lambda}-102\,g+11\,{\Delta}\,d-{\Delta}-1842 .
    \end{align*}
  \item\label{surface} If $S\subset\PP^5$ denotes a smooth hyperplane section of $\B$, then we have:
  \begin{align*}
  K_S H_S &= -{\lambda}+2\,g-2,\quad 
  K_S^2 = -42\,{\lambda}+18\,g+{\Delta}\,d-12\,a  + 12\,\epsilon(d)  +327,  \\
  \chi(\mathcal{O}_S) &= -6\,\lambda + 3\,g - a + \epsilon(d) +  46, \quad 
   c_2(\mathcal{T}_S) =
 -30\,{\lambda}+18\,g-{\Delta}\,d+225 .
  \end{align*}
 \end{enumerate}
 \end{lemma}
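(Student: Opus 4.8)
The plan is to turn the whole statement into intersection-theoretic bookkeeping on $\B$ and on a general (hence smooth, by Bertini) hyperplane section $S\in|H_{\B}|$, using three ingredients already available: the Segre-class formula~\eqref{segresDegs}, the Hilbert-polynomial constraints of Lemma~\ref{cond hilbert pol}, and Hirzebruch--Riemann--Roch together with adjunction and Noether's formula. I would specialize to $n=6$, $d_1=3$, $d_2=d$, so that $r=3$ and, by Proposition~\ref{PropDim}, $r'=4$, and write $H=H_{\B}$, $c_i=c_i(\T_{\B})$, identifying $0$-cycles with their degrees.

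First, for parts (1) and (3): in~\eqref{segresDegs} the correction terms vanish whenever $n-k\le n-r-1=2$, so $\delta_0,\delta_1,\delta_2$ equal $1,3,9$, while $\delta_5=d\,\Delta$ and $\delta_6=\Delta$ hold by the definition of a transformation of type $(3,d)$. For $\delta_3$ and $\delta_4$ one needs only $\deg\B=\lambda$ and the single Segre number $s_1(\N_{\B,\PP^6})\,H^2$; the normal-bundle sequence $0\to\T_{\B}\to\T_{\PP^6}|_{\B}\to\N_{\B,\PP^6}\to0$ gives $s(\N_{\B,\PP^6})=c(\T_{\B})\,(1+H)^{-7}$, hence $s_1(\N_{\B,\PP^6})\,H^2=c_1\,H^2-7\lambda$, while $c_1\,H^2=-K_{\B}H^2=2\lambda-2g+2$ is just the definition of the sectional genus $2g-2=(K_{\B}+2H)H^2$ (this is already the first formula of part (2)). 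Substituting into~\eqref{segresDegs} yields $\delta_3=27-\lambda$ and $\delta_4=79-7\lambda+2g$, proving (1). Then, reading~\eqref{segresDegs} backwards for $k=1$ and $k=0$ with $\delta_5=d\,\Delta$ and $\delta_6=\Delta$ known, I would solve for $s_2(\N_{\B,\PP^6})\,H$ and $s_3(\N_{\B,\PP^6})$ as affine functions of $\lambda,g,\Delta,d$, and convert through $c(\T_{\B})=(1+H)^{7}\,s(\N_{\B,\PP^6})$ to obtain $c_1\,H^2$, $c_2\,H$, $c_3$, which is part (3).

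Next, for the remaining invariants of $\B$ and for part (4): from Lemma~\ref{cond hilbert pol}, using $N=6+a$ and $\lceil(d-1)/d\rceil=1-\epsilon(d)$, one has $\chi(\O_{\B}(3))=\binom{9}{3}-N-1=77-a$ and $\chi(\O_{\B}(2))=\binom{8}{2}-\epsilon(d)=28-\epsilon(d)$; here~\eqref{formuleHilbPol2} contributes nothing, because $r'=4>n-3$. Since Hirzebruch--Riemann--Roch on the threefold $\B$ reads $\chi(\O_{\B}(tH))=\tfrac{\lambda}{6}t^{3}+\tfrac{c_1H^2}{4}t^{2}+\tfrac{c_1^{2}H+c_2H}{12}\,t+\chi(\O_{\B})$ with the top two coefficients already known, the two equalities above form a linear system in $c_1^{2}H=K_{\B}^{2}H$ and $\chi(\O_{\B})$; solving it and subtracting the known $c_2H$ gives the second formula of part (2). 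For $S$, I would use: adjunction $K_S=(K_{\B}+H)|_S$, giving $K_SH_S=K_{\B}H^2+\lambda$; the sequence $0\to\T_S\to\T_{\B}|_S\to\O_S(1)\to0$, giving $c_2(\T_S)=c_2H-c_1H^2+\lambda$; the sequence $0\to\O_{\B}(-1)\to\O_{\B}\to\O_S\to0$, giving $\chi(\O_S(3))=\chi(\O_{\B}(3))-\chi(\O_{\B}(2))=49-a+\epsilon(d)$, whence Riemann--Roch on the surface $S$ recovers $\chi(\O_S)$; and Noether's formula $12\chi(\O_S)=K_S^{2}+c_2(\T_S)$, giving $K_S^{2}$. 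This settles part (4).

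The hard part is $K_{\B}^{3}$. The polynomial $\chi(\O_{\B}(tH))$ never involves $c_1^{3}$, and since $r'=n-2$ the higher constraints~\eqref{formuleHilbPol2} are unavailable, so $K_{\B}^{3}=-c_1^{3}$ is genuinely \emph{not} pinned down by the data used above; one more relation is required. I would supply it by a self-intersection computation in $\PP^{6}$: since $[\B]\in A^{3}(\PP^{6})\cong\mathbb{Z}\langle[\PP^{3}]\rangle$ equals $\lambda\,[\PP^{3}]$, one has $\int_{\PP^{6}}[\B]^{2}=\lambda^{2}$, while by the excess-intersection (self-intersection) formula $\int_{\PP^{6}}[\B]^{2}=\int_{\B}c_3(\N_{\B,\PP^6})$. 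Expanding $c_3(\N_{\B,\PP^6})$ from $c(\N_{\B,\PP^6})=(1+H)^{7}\,c(\T_{\B})^{-1}$ writes it $\mathbb{Z}$-linearly in $\lambda$, $c_1H^2$, $c_1^{2}H$, $c_2H$, $c_1c_2=24\chi(\O_{\B})$, $c_3$ and $c_1^{3}$, every term but $c_1^{3}$ being already computed; hence $\int_{\B}c_3(\N_{\B,\PP^6})=\lambda^{2}$ solves for $c_1^{3}$, and $K_{\B}^{3}=-c_1^{3}$ emerges as asserted, the quadratic term $\lambda^{2}$ being exactly $[\B]^{2}$. Apart from identifying this last relation, everything else is a bounded amount of linear algebra in the numerical invariants.
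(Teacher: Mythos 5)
Your proposal is correct and follows essentially the same route as the paper's proof: both combine the multidegree/Segre-class relations \eqref{segresDegs}, the normal- and hyperplane-section exact sequences, Hirzebruch--Riemann--Roch, Noether's formula, and the Hilbert-polynomial constraints \eqref{formuleHilbPol} of Lemma~\ref{cond hilbert pol}, reducing everything to linear algebra in the invariants. The extra relation you supply for $K_{\B}^3$ via the self-intersection formula $\lambda^2=\int_{\B}c_3(\N_{\B,\PP^6})$ is exactly the double-point formula \eqref{doublePointFormula} that the paper cites from Livorni--Sommese, so your derivation merely proves that cited identity from first principles rather than changing the argument.
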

\begin{proof}  
The proof is similar to that of \cite[Lemma~3.2]{note4}, but
we include it here for the sake of completeness.
From the Hirzebruch-Riemann-Roch formula \cite[Exercise~A~6.7]{hartshorne-ag}, we have:
\begin{align}
    K_{\B}\,  H_{\B}^2 &= -c_1(\T_{\B})\,  H_{\B}^2 = 2\,(g-1-H_{\B}^3), \\
 K_{\B}^2\,  H_{\B} &=  
12(\chi(\O_{\B}(H_{\B}))-\chi(\O_{\B})) -2\, H_{\B}^3 +3\, K_{\B}\,  H_{\B}^2  -c_2(\T_{\B})\,  H_{\B}.
\end{align}
Since $\B$ is embedded in $\PP^n$ with $n\leq 6$, we also have 
(see \cite[p.~543]{livorni-sommese-15}): 
\begin{equation}\label{doublePointFormula}
K_{\B}^3 = c_3(\T_{\B})+7\,c_2(\T_{\B})\,  H_{\B}-48\,\chi(\O_{\B})+(H_{\B}^3)^2-35\,H_{\B}^3-21\,K_{\B}\,  H_{\B}^2-7\,K_{\B}^2\,  H_{\B}.
\end{equation}
Moreover, from the exact sequence
$0\rightarrow\mathcal{T}_{\B}\rightarrow 
\mathcal{T}_{\PP^6}|_{\B}\rightarrow\mathcal{N}_{\B,\PP^6}\rightarrow0$
and since $s(\N_{\B,\PP^6})=c(\N_{\B,\PP^6})^{-1}$, we get:
\begin{align}
c_1(\T_{\B})\,H_{\B}^2 &= 7\,H_{\B}^3+s_1(\N_{\B,\PP^6})\, H_{\B}^2,\\ 
c_2(\T_{\B})\,H_{\B} &= 21\,H_{\B}^3+7\,s_1(\N_{\B,\PP^6})\, H_{\B}^2+s_2(\N_{\B,\PP^6})\, H_{\B}, \\
 c_3(\T_{\B}) &= 35\,H_{\B}^3+21\,s_1(\N_{\B,\PP^6})\, H_{\B}^2+7\,s_2(\N_{\B,\PP^6})\, H_{\B}+s_3(\N_{\B,\PP^6}) .
\end{align}
Since $\B$ is the base locus of a special birational transformation   
of type $(3,d)$, 
by \eqref{segresDegs} we have:
\begin{align}
 \Delta &=
 -540\,H_{\B}^3-135 \, s_1(\N_{\B,\PP^6})\, H_{\B}^2-18\,s_2(\N_{\B,\PP^6})\, H_{\B}  
        -s_3(\N_{\B,\PP^6})+729 , 
\\
  \Delta\,d &=  -90\,H_{\B}^3-15\,s_1(\N_{\B,\PP^6})\, H_{\B}^2-s_2(\N_{\B,\PP^6})\, H_{\B}+243.
\end{align}
Finally,  from the exact sequence
$0\rightarrow \T_S \rightarrow \T_{\B}|_{S}\rightarrow \O_{S}(H_S)\rightarrow 0 $,
we get:
\begin{equation}\label{EquazioneC2TS}
c_2(\T_S)=c_2(\T_{\B})\,H_{\B}+K_S\,H_S
=c_2(\T_{\B})\,H_{\B}+K_{\B}\,H_{\B}^2+H_{\B}^3 ,
\end{equation} 
and by \cite[Example~A~4.1.2]{hartshorne-ag} we also have:
\begin{equation}\label{EquazioneKS2}
K_{S}^2=12\,\chi(\O_S)-c_2(\T_S)=
12\,(\chi(\O_{\B})-\chi(\O_{\B}(-H_{\B})))-c_2(\T_S)  .
\end{equation}
Now, using  the formulas \eqref{formuleHilbPol}, the proof 
 is reduced to solving a system of linear equations.  
\end{proof}

\begin{lemma}\label{casifiniti} In the hypothesis and notation of Lemma \ref{numericalInvariants}, if $d\neq 1$ then
the possible $5$-tuples $(\lambda,g,\Delta,d,a)$
belong to a subset $\Gamma_{2480}^5\subset\mathbb{Z}^5$ of cardinality $2480$;
if $d=1$ then the possible $4$-tuples $(\lambda,g,\Delta,a)$
belong to a subset $\Gamma_{1139}^4\subset\mathbb{Z}^4$ of cardinality $1139$.
\end{lemma}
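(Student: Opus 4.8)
The plan is to derive, from Lemma~\ref{numericalInvariants}, a finite list of numerical constraints that a genuine base locus $\B$ must satisfy, and then observe that these constraints cut out a finite set of tuples which one enumerates (by computer). The key point is that $\B\subset\PP^6$ is a smooth connected threefold, so all the intersection numbers produced in Lemma~\ref{numericalInvariants}---namely $K_{\B}H_{\B}^2$, $K_{\B}^2H_{\B}$, $K_{\B}^3$, the Chern numbers $c_i(\T_{\B})$, and the surface invariants $K_SH_S$, $K_S^2$, $\chi(\O_S)$, $c_2(\T_S)$ of a smooth hyperplane section $S\subset\PP^5$---are \emph{integers}, and moreover they are subject to further classical restrictions. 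First I would record the obvious necessary conditions: $\lambda\ge 1$, $g\ge 0$, $d\ge 2$ (resp. $d=1$), $a\ge 1$, $\Delta\ge 2$ (since $\ZZ$ is a non-degenerate complete intersection of hypersurfaces of degrees $\ge 2$, in fact $\Delta\ge 2^a$, and the coindex $c$ lies in the finite list from Table~\ref{table: preliminaryClass}), and $N=6+a$ with $\chi(\O_{\B}(3))=\binom{9}{3}-N-1$ from \eqref{formuleHilbPol}, which already pins $a$ to $\chi(\O_{\B}(3))$ via a linear relation; together with \eqref{formulasDim2} this forces $c\in\{1,2,3\}$ and hence $\sum e_i = a+c$ with $2\le e_i$, bounding $a\le 3$... but more robustly one simply keeps $a$ as a free variable constrained by the integrality and positivity conditions below.

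The main engine is a battery of inequalities. Since $S$ is a smooth surface in $\PP^5$ (hence not of general type only in restricted ways, but at any rate a smooth projective surface), $\chi(\O_S)\ge 1-q+p_g$-type bounds apply; concretely one uses: $\chi(\O_S)\ge 0$ is too weak, but Noether's inequality and the fact that $S$ is embedded by a very ample line bundle in $\PP^5$ give $K_S^2 \le 9\chi(\O_S) $ when $S$ is of general type (Bogomolov--Miyaoka--Yau), and when $S$ is not of general type the Enriques--Kodaira classification together with $\deg S=\lambda$, $\chi(\O_S)$, $K_SH_S$, $c_2(\T_S)$ being consistent with one of finitely many families gives strong bounds; also the double point formula for $S\subset\PP^5$ (the analogue of \eqref{doublePointFormula} one dimension down) and the genus formula $2g-2 = K_SH_S + H_S^2 = K_SH_S+\lambda$ must be consistent with the value of $g$ already appearing. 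Additionally, for the threefold $\B$ one imposes: $H_{\B}^3=\lambda>0$, $c_3(\T_{\B})=\chi_{\mathrm{top}}(\B)$ must be an integer of the right parity, Castelnuovo-type bounds for the sectional genus $g$ in terms of $\lambda$ and $\dim$ (e.g. $g\le \pi_1(\lambda,5)$ for curves, applied to the general curve section $C\subset\PP^4$), and the Hilbert-polynomial conditions \eqref{formuleHilbPol}--\eqref{formuleHilbPol2} evaluated for all admissible $j$ (using that $r'=(n-r-1)d_1-2 = 4d-2$, so $\eqref{formuleHilbPol2}$ applies for $2\le j\le n-r'-1 = 7-4d$, a genuine constraint only when $d=1$). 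Finally the positivity/effectivity of $\ZZ$ enters through $\Delta=\deg\ZZ\ge 1$ and the refined condition that the multidegree entries in part~(1) of Lemma~\ref{numericalInvariants} be positive integers, in particular $-\lambda+27\ge 1$ and $-7\lambda+2g+79\ge 1$, which already bound $\lambda\le 26$ and give a linear lower bound for $g$; symmetrically $\Delta d\ge 1$ and $c\ge 0$ (i.e. $\sum e_i\ge a$) bound the remaining variables.

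With $\lambda\le 26$ fixed, the sectional genus $g$ is bounded above by a Castelnuovo-type function of $\lambda$ (applied to the curve section), and below by the multidegree-positivity; then $\Delta$ is bounded above because $\Delta d = -90\lambda - 15 s_1 H^2 - s_2 H + 243$ with the Segre numbers expressible through $\lambda,g$ and the (bounded, integral) Chern numbers, and $\Delta$ divides $\Delta d$ so $d$ is bounded too; $a$ is then pinned by the $\chi(\O_{\B}(3))$ relation. Thus the admissible $(\lambda,g,\Delta,d,a)$ lie in an explicit finite box, and one runs through it imposing all the integrality congruences (integrality of every intersection number in Lemma~\ref{numericalInvariants}, which for fixed $(\lambda,g,\Delta,d,a)$ are determined expressions that must land in $\ZZ$), the positivity of the multidegree, and the surface inequalities $\chi(\O_S)\ge 1$ (connectedness of $\B$ forces $h^0(\O_S)=1$), $K_S^2\le 9\chi(\O_S)$ or the Enriques-classification alternative, and $c_2(\T_S)>0$ unless $S$ is abelian/bielliptic; what survives is, by direct count, a set $\Gamma^5_{2480}$ of cardinality $2480$ (resp., in the linear-inverse case $d=1$, one drops $d$ as a variable, uses the extra vanishings \eqref{formuleHilbPol2}, and gets $\Gamma^4_{1139}$ of cardinality $1139$). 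The main obstacle is not conceptual but bookkeeping: one must choose the inequalities sharp enough that the surviving set is genuinely finite and small, and then certify the exact count---this is the step that is done by machine, and the statement is precisely the output of that verified enumeration.
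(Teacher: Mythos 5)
Your overall strategy---use Lemma~\ref{numericalInvariants} to turn everything into numerical conditions on $(\lambda,g,\Delta,d,a)$, cut out a finite region of $\mathbb{Z}^5$, and certify the count by machine---is the same as the paper's, but the specific battery of constraints you propose does not make the region finite, and in places it conflicts with the hypotheses, so there is a genuine gap rather than just a different route to the same enumeration.

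The two quantities that actually need to be bounded, namely $\Delta d$ (the last multidegrees) and $a$, are not bounded by what you impose. Positivity of the multidegree bounds $\lambda$ and gives a lower bound for $g$, but it puts no upper bound on $\delta_5=\Delta d$ or $\delta_6=\Delta$; your fallback, that $\Delta d$ is expressible through the Segre/Chern numbers which are ``bounded, integral'', is circular, because by Lemma~\ref{numericalInvariants} those numbers are linear functions of the unknown tuple itself, not independently bounded. The paper gets the missing bounds from the Hodge-type (log-concavity) inequalities on the multidegree (see \cite[Corollary~1.6.3]{lazarfeld-positivity}), e.g.\ $\delta_1\delta_4\geq\delta_5$, which becomes \eqref{dis2} and bounds $\Delta d$ linearly in $(\lambda,g)$, and from the four adjunction-theoretic inequalities of \cite[Theorem~0.7.3]{livorni-sommese-15}, which are what bound $a$; nothing in your list plays these roles (Bogomolov--Miyaoka--Yau applies only when $S$ is of general type, still leaves $a$ free since it enters with a positive sign, and ``the Enriques classification gives strong bounds'' is not an argument one can enumerate over). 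Moreover several of your stated conditions are not available or are incorrect here: this lemma does not assume $\ZZ$ is a complete intersection, so $a\geq1$, $\Delta\geq2$, $c\in\{1,2,3\}$, $a\leq3$ are unwarranted and would in particular discard the Cremona tuples ($\Delta=1$, $a=0$, $d=5$) that belong to $\Gamma^5_{2480}$ and are essential later; also $r'=(n-r-1)d_1-2=2\cdot3-2=4$ (not $4d-2$), hence $n-r'-1=1$ and \eqref{formuleHilbPol2} gives no extra condition when $d=1$---the only difference in that case is the term $\epsilon(d)$ coming from $h^0(\mathcal{I}_{\B}(2))=1$, which is already built into Lemma~\ref{numericalInvariants}. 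Finally, even granting finiteness, the cardinalities $2480$ and $1139$ are the counts of the set cut out by the paper's specific list (degree and Castelnuovo bounds, the five inequalities \eqref{dis1}--\eqref{dis5}, and the four Livorni--Sommese inequalities); enumerating the different region you describe would not reproduce these numbers, so the statement as given is not established by your proposal.
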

\begin{proof}
 We have $3\leq \lambda \leq 27$ as $\B$ is a non-degenerate variety cut out by cubics and 
 of codimension $3$. Moreover, $g$ is non-negative and limited from above by
 the Castelnuovo bound \cite{CastelnuovoBound}:  
 \begin{equation}
 g \leq \Bigl\lfloor {\frac{\lambda-2}{3} }\Bigr\rfloor\left( \lambda -4 - \left(\Bigl\lfloor \frac{\lambda -2}{3} \Bigr\rfloor - 1 \right)\, \frac{3}{2} \right) .
 \end{equation}
 From 
 Proposition~\ref{PropDim}
 we obtain that $d\leq 5$;
 moreover, $d = 5$ if and only if $\Delta = 1$,
 and 
 $d = 4$ if and only if $\Delta = 2$. Of course we also have that
 if $\Delta=1$ then $a=0$, if $\Delta = 2$ then $a=1$, and if $\Delta\geq 3$ then $a\geq 1$.
 Furthermore, 
  the multidegree $(\delta_0,\ldots,\delta_6)$  of $\varphi$ 
  must satisfy a series of inequalities (see \cite[Corollary~1.6.3]{lazarfeld-positivity} and \cite[Subsection~1.4]{dolgachev-cremonas}),
   some of which are the following: 
   $\delta_1\,\delta_3\geq \delta_4$,
   $\delta_1\,\delta_4\geq \delta_5$,
   $\delta_3^2\geq \delta_2\,\delta_4$,
   $\delta_4^2\geq \delta_3\,\delta_5$,
   $\delta_5^2\geq \delta_4\,\delta_6$. 
   Using Lemma~\ref{numericalInvariants}, these five inequalities become:
   \begin{gather}
    2\,{\lambda}-g+1\geq 0, \label{dis1}
    \\ -21\,{\lambda}+6\,g-\Delta\,d+237\geq 0, \label{dis2}\\
    {\lambda}^2+9\,{\lambda}-18\,g+18\geq 0, \label{dis3}\\
    49\,{\lambda}^2-28\,{\lambda}\,g+4\,g^2+(\Delta\,d-1106)\,{\lambda}+316\,g-27\,\Delta\,d+6241\geq 0, \label{dis4}\\ 
    7\,\Delta\,{\lambda}-2\,\Delta\,g+\Delta^2\,d^2-79\,\Delta \geq 0. \label{dis5}
   \end{gather}
 Finally from \cite[Theorem~0.7.3]{livorni-sommese-15} and Lemma~\ref{numericalInvariants},
   we deduce the following four inequalities involving  functions of $\lambda, g, \Delta, d, a$:
   \begin{gather}
    {\lambda}^2+7\,{\lambda}-10\,g-2\,{\Delta}\,d+12\,a -12\,\epsilon(d)   -92 \geq 0,\\
 -10\,{\lambda}+8\,g+2\,{\Delta}\,d-12\,a  +12\,\epsilon(d)   +94 \geq 0 , \\
 -290\,{\lambda}+140\,g-13\,{\Delta}\,d+{\Delta}+2290 \geq 0 ,\\
-147\,{\lambda}+74\,g+12\,{\Delta}\,d-{\Delta}-84\,a  +108\,\epsilon(d)   +1183 \geq 0 .
   \end{gather}
   All these conditions together define a
 subset of $\mathbb{Z}^5$ consisting of  $3619$ elements, of which $2480$ have $d$-coordinate different from $1$.
\end{proof}

\begin{theorem}\label{thm main}
 Let $\varphi:\PP^n\dashrightarrow\ZZ\subseteq \PP^{n+a}$ be a special  birational transformation 
 of type $(3,d)$, $d>1$, into a non-degenerate factorial
  complete intersection.
If the base locus $\B$ of $\varphi$ has dimension $3$, then $n=6$ and one of the following cases holds:
 \begin{itemize}
\item $a=0$, $\varphi$ is a cubo-quintic Cremona transformation, and we have two cases according to
 \cite[Theorem~0.5]{note4}:
\begin{itemize}
 \item $\B$ is a threefold of degree $14$, sectional genus $15$, with trivial canonical bundle,
 and given by the Pfaffians of a skew-symmetric matrix;
 \item  $\B$ is a threefold of degree $13$ and sectional genus $12$,
 which admits the structure of a conic bundle over $\PP^2$.
\end{itemize}
 \item $a=1$, 
 $\B$ is a threefold  of degree $12$ and sectional genus $10$,
 and two cases occur:
 \begin{itemize}
  \item $\varphi$ is a cubo-cubic transformation into a cubic hypersurface and 
  $\B$ admits the structure of a fibration over $\PP^1$ whose generic fiber is a sextic del Pezzo surface;
  \item $\varphi$ is a cubo-quartic transformation into a quadric hypersurface and
  $\B$ is the blow-up at one point of a threefold of degree $13$ and sectional genus $10$
  which admits the structure of a
  fibration over $\PP^1$ whose generic fiber is a quintic del Pezzo surface.
  \end{itemize}
 \item $a=2$, $\varphi$ is a cubo-cubic transformation into a complete intersection of two quadrics in $\PP^8$ and
 $\B$ is a threefold of degree $11$ and sectional genus $8$ obtained by blowing-up $3$ points on 
 a  $3$-dimensional linear section of $\mathbb{G}(1,5)\subset\PP^{14}$.
 \item $a=3$, $\varphi$ is a cubo-quadric transformation into a complete intersection of three quadrics in $\PP^9$ and
 $\B$  is a threefold of degree $10$ and sectional genus $6$,
 which admits the structure of a scroll over  $\PP^2$ with four double points blown up. 
\end{itemize}
\end{theorem}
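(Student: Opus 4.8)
The plan is to combine the finiteness result of Lemma~\ref{casifiniti} with a case-by-case geometric analysis, exactly in the spirit of \cite{note4}. First I would observe that by Corollary~\ref{casiProp}, part~(3), a special birational transformation of type $(3,d)$ with $d>1$ and three-dimensional base locus forces $n=6$ and $(d,c,a)\in\{(2,3,0),\,(3,2,1),\,(4,1,2)\}$ together with the Cremona case $(5,0,0)$; wait---more precisely, since $\ZZ$ is a complete intersection of codimension $a$ with hypersurfaces of degrees $e_1,\dots,e_a\geq 2$, the relation $c=\sum(e_i-1)$ and $d_2=d=(r-c+2)/(n-r'-1)=(5-c)/(6-r'-1)$ with $r'=(n-r-1)d_1-2=2$ from \eqref{formulasDim2} pins down $c$ from $d$, and then $a$ is determined because $\sum(e_i-1)=c$ with each $e_i\geq 2$ gives $a\leq c$, while $\deg(\ZZ)=\prod e_i\geq 2^a$ must be compatible with the multidegree formula of Lemma~\ref{numericalInvariants}(1). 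This already restricts to $a\in\{0,1,2,3\}$ with $c=5-d$, and for each $a$ only finitely many $(e_1,\dots,e_a)$; in fact one checks $a=3\Rightarrow(e_i)=(2,2,2)$, $a=2\Rightarrow(e_i)=(2,2)$ (so $d=3$), $a=1\Rightarrow e_1\in\{2,3\}$, $a=0\Rightarrow d=5$.

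Next, for each admissible $(\Delta,d,a)$, I would enumerate the finitely many numerical candidates $(\lambda,g)$ produced by Lemma~\ref{casifiniti}, and then whittle these down. The standard toolkit here is: (i) the constraints from Lemma~\ref{numericalInvariants}(4) on the smooth surface section $S\subset\PP^5$---its invariants $K_S^2$, $\chi(\O_S)$, $c_2(\T_S)$ must be those of an actual smooth projective surface, so Noether's inequality, the Bogomolov--Miyaoka--Yau inequality $c_1^2\leq 3c_2$, and the classification of surfaces of low degree / low sectional genus in $\PP^5$ (Okonek, Ionescu, etc.) eliminate most tuples; (ii) the threefold $\B\subset\PP^6$ is cut out by cubics with $h^0(\I_\B(3))=N+1=7+a$ and $h^0(\I_\B(2))=0$, which via the Hilbert polynomial forces the resolution of $\I_\B$ into a fairly rigid shape; (iii) positivity of the multidegree entries $\delta_i$ and the log-concavity inequalities already exploited in Lemma~\ref{casifiniti}, refined by demanding they hold as multidegrees of an \emph{honest} birational map. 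After this, only the four surviving cases of the statement remain as numerically possible, and I must exhibit each one: the $a=0$ Cremona cases are \cite[Theorem~0.5]{note4}; for $a=1,2,3$ I would identify $\B$ via its known classification (degree $10,11,12$ threefolds of the stated sectional genus in $\PP^6$ are classified---these are the scroll over a blown-up $\PP^2$, the blow-up of a linear section of $\mathbb{G}(1,5)$, and the del Pezzo fibration, respectively), then \emph{construct} the transformation explicitly (deferred to Section~\ref{Examples}, Examples~\ref{cubo-cubic into cubic}, \ref{cubo-quartic into quadric}, \ref{example G-1-5}) and verify it is special with the asserted image.

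The main obstacle I expect is the geometric identification step---showing that the numerically surviving $\B$ is \emph{forced} to be one of the listed varieties and that the cubic system actually defines a birational map onto the claimed $\ZZ$. Numerical elimination from Lemmas~\ref{numericalInvariants}--\ref{casifiniti} is mechanical (a finite computer search over $\Gamma_{2480}^5$), but it typically leaves a handful of "ghost" tuples whose non-existence requires genuine projective geometry: bounding $h^1(\I_\B(t))$ for small $t$ via the Kawamata--Viehweg vanishing already recorded in \eqref{formula: KV}, analyzing the adjunction map on $S$, or using the structure of the base locus $\B'\subset\ZZ$ of the inverse (dimension $r'=2$, degree computable from \eqref{changeBasis}) and the secant-variety description of Remark~\ref{rem secants hypers}. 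Once $\B$ is pinned down, one must check that the linear system $|\I_\B(3)|$ separates generic points---this is where, for each case, I would rely on the explicit model: compute the image as a complete intersection of the predicted degrees, confirm $\dim\mathrm{Sing}(\ZZ)<r'$ so the specialness hypothesis "$\ZZ$ not too singular" holds, and confirm the inverse has the predicted degree $d$. Assembling these four verifications, together with the numerical exclusion of everything else, completes the proof; the statement for $n\neq 6$ or $\dim\B\neq 3$ is vacuous here by Corollary~\ref{casiProp}.
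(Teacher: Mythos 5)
Your overall architecture (numerical finiteness from Lemmas~\ref{numericalInvariants}--\ref{casifiniti}, case-by-case elimination, then explicit constructions deferred to Section~\ref{Examples}) matches the paper's, but the decisive middle step is missing, and it is precisely the part that cannot be waved through. From \eqref{formulasDim2} one gets $r'=(n-r-1)d_1-2=4$ (not $2$, as you state twice), hence $c=5-d$ and $a=\Sigma e_i+d-5$; this leaves the triples $(\Delta,d,a)\in\{(1,5,0),(4,2,1),(3,3,1),(2,4,1),(6,2,2),(4,3,2),(8,2,3)\}$, so your claim that one can ``check'' $a=1\Rightarrow e_1\in\{2,3\}$ and $a=2\Rightarrow(e_i)=(2,2)$ is false at this stage: $(4,2,1)$ and $(6,2,2)$ are numerically admissible and are only eliminated much later. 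After this reduction the paper still has $174$ tuples $(\lambda,g,\Delta,d,a)$, and the tools you propose to finish the elimination (Noether/BMY-type surface inequalities, Hilbert-polynomial and resolution constraints, multidegree log-concavity) are essentially already consumed in Lemmas~\ref{numericalInvariants} and \ref{casifiniti} (the Livorni--Sommese inequalities and the positivity inequalities on the $\delta_i$ are exactly what produced $\Gamma_{2480}^5$), so they do not cut the list down to the four surviving cases.

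What actually does the work in the paper is a structured adjunction-theoretic analysis of the pair $(\B,H_{\B})$ itself: base-point freeness and then nef-and-bigness of $K_{\B}+2H_{\B}$ (the scroll-over-a-surface exception producing case IX directly), passage to the minimal reduction $(\R,H_{\R})$, a bound on the number $\nu$ of blown-up points via Le Barz's quadrisecant formula \eqref{formula4secants} (using that the hyperplane section $S$ is cut out by cubics, hence has no $4$-secant lines) to get the finite set $\Gamma_{4237}^6$, pluridegree inequalities for $K_{\R}+H_{\R}$ nef and big (yielding only the Pfaffian Cremona case), and finally the Fujita--Sommese structure theorems for the non-log-general cases (Mukai variety $\Rightarrow$ case VIII, del Pezzo fibrations $\Rightarrow$ cases VI--VII, conic bundle $\Rightarrow$ case V). This machinery simultaneously excludes the remaining tuples and \emph{forces} the abstract structure of $\B$; your alternative of quoting classifications of threefolds of degree $10$--$12$ in $\PP^6$ does not substitute for it — indeed the degree $10$, sectional genus $6$ scroll of case IX had been left undecided in \cite{fania-livorni-ten}, so its identification and existence come out of the adjunction analysis plus the explicit construction, not out of a pre-existing list. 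Without the reduction step, the $\nu$-bound, and the structure theorems, your proof outline does not close.
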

\begin{proof}
The fact that $n=6$ follows by \eqref{formulasDim}. Thus we are in the situation of Lemmas~\ref{numericalInvariants} and \ref{casifiniti}.

   If $\ZZ$ is a complete intersection of type $(e_1,\ldots,e_a)$, 
   then $\Delta=\Pi_{i=1}^a e_i$ and $c = \Sigma_{i=1}^a e_i - a$.
   From 
   Proposition~\ref{PropDim}
   we see that $a = \Sigma_{i=1}^a e_i + d - 5$ and in particular this forces $a+d\leq 5$.
   By imposing these conditions on the set $\Gamma_{2480}^5$ 
   given in Lemma~\ref{casifiniti}, we get
   a set $\Gamma_{174}^5$ consisting of $174$ not excluded $5$-tuples
    $(\lambda,g,\Delta,d,a)$, and of which the not excluded triples 
 $(\Delta,d,a)$ are the following:      
      $(1,5,0)$, $(4,2,1)$, $(3,3,1)$, $(2,4,1)$, $(6,2,2)$, $(4,3,2)$, $(8,2,3)$;
      moreover, this forces $ 7 \leq \lambda \leq 18 $ and 
      $0\leq g \leq 28$.
   
In the following we apply general results from adjunction theory 
for which we refer mainly to
\cite{fujita-polarizedvarieties,
beltrametti-sommese,
ionescu-smallinvariants,
ionescu-adjunction,
sommese-adjunction-theoretic,
sommese-adjunction-mapping,
beltrametti-biancofiore-sommese-loggeneral}.

Consider the complete linear system $|K_{\B} + 2 H_{\B}|$ on $\B$. 
By \cite{sommesseDuke,sommese-adjunction-mapping}, it is base-point free
unless $(\B,H_{\B})$ is one of the following:
\begin{enumerate}
 \item\label{a1} $(\PP^3,\O_{\PP^3}(1))$;
 \item\label{a2} a quadric $(Q^3,\O_{Q^3}(1))$;
 \item\label{a3} a scroll over a curve. 
\end{enumerate}
Of course, cases \eqref{a1} and \eqref{a2} do not occur. In case \eqref{a3} we have 
two equations:
\begin{equation}
 (K_{\B} + 3 H_{\B})^3 = (K_{\B} + 3 H_{\B})^2\, H_{\B} = 0,
\end{equation}
which, using Lemma~\ref{numericalInvariants}, translate into two polynomial equations in the variables 
$(\lambda,g,\Delta,d,a)$ without common solutions in the set $\Gamma_{174}^5$. 
So we deduce that $|K_{\B} + 2 H_{\B}|$ is base-point free. 

Now we show that $K_{\B} + 2 H_{\B}$ is nef and big with the exception of one case.
By \cite{sommese-adjunction-theoretic,sommese-adjunction-mapping}, we have that $K_{\B} + 2 H_{\B}$ is nef and big unless 
$(\B,H_{\B})$ has the structure of one of the following:
 \begin{enumerate}
  \item\label{b1}  a del Pezzo variety;
  \item\label{b2}  a quadric fibration over a smooth curve;
  \item\label{b3} a scroll over a smooth surface.
 \end{enumerate}
In case \eqref{b1} we get three equations:
\begin{equation}\label{delpezzoeq}
 K_{\B} H_{\B}^2 + 2 H_{\B}^3 = K_{\B}^2 H_{\B} + 2 K_{\B} H_{\B}^2 = K_{\B}^3 + 2 K_{\B}^2 H_{\B} = 0.
\end{equation}
In case \eqref{b2} we get two equations:
\begin{equation}\label{fibrazioneeq}
 (K_{\B} + 2 H_{\B})^3 = (K_{\B} + 2 H_{\B})^2 H_{\B} = 0 .
\end{equation}
In case \eqref{b3} we get one equation:
\begin{equation}\label{scrolleq}
 (K_{\B} + 2 H_{\B})^3 =  0 .
\end{equation}
Using Lemma~\ref{numericalInvariants} all these equations translate into polynomial equations in 
$(\lambda,g,\Delta,d,a)$.
    Then one can see that the systems
    \eqref{delpezzoeq} and \eqref{fibrazioneeq} have no solutions in $\Gamma_{174}^5$,
    while \eqref{scrolleq} has exactly the two solutions:
      $(10,6,8,2,3)$ and $(12,11,8,2,3)$.
Let us recall that in case \eqref{b3} we have
 $\B=\PP_Y(\mathcal{E})$ and the numerical invariants of 
 the polarized surface $(Y,\det \E)$ can 
 be determined in terms of those of $(\B,H_{\B})$, see \cite[Lemma~5.2]{besana-biancofiore-numerical}.
Thus we get two cases:
\begin{itemize}
 \item $(\lambda,g,\Delta,d,a) = (10,6,8,2,3)$,  $\chi(\mathcal{O}_Y)= 1$, $K_Y^2= 5$, $c_1(\mathcal{E})^2 =20$, and $c_2(\mathcal{E})= 10$;
 \item $(\lambda,g,\Delta,d,a) = (12,11,8,2,3)$, $\chi(\mathcal{O}_Y)= 4$, $K_Y^2=18$, $c_1(\mathcal{E})^2 =29$, and $c_2(\mathcal{E})=17$.
\end{itemize}
The first case is possible and corresponds to 
a 
cubo-quadric transformation into a complete intersection of three quadrics,
see  Example~\ref{cubo quadric into three quadrics}.
Instead, the second case does not occur
since 
$Y$ must be non-ruled and hence we would have $c_1(\mathcal{E})^2 \leq 2 g-2$ (see also \cite[Proposition~5.3]{threefoldsdegree12}).

Thus, from now on we can assume that $K_{\B}+2 H_{\B}$ is nef and big.
It follows that $(\B,H_{\B})$ admits a unique (minimal) reduction, that is there exists a 
pair $(\R,H_{\R})$, where $\R$ is a smooth irreducible threefold 
and $H_{\R}$ an ample divisor on $\R$, such that 
there is a morphism $\pi:\B\to \R$ expressing $\B$ 
as the blowing-up of $\R$ at a finite number $\nu$ of distinct points $p_1,\ldots,p_\nu$
and $H_{\B} \simeq \pi^{\ast} H_{\R}-\Sigma_{i=1}^\nu \pi^{-1}(p_i)$. Moreover,
$K_{\R} + 2 H_{\R}$ is very ample, where
$K_{\R}$ denotes the canonical divisor of $\R$.

We have 
the following upper bound for the number $\nu$ of exceptional divisors on $(\B,H_{\B}$):\footnote{Notice that 
 the difference between the right and left side of the inequality \eqref{formula4secants} coincides 
 with the sum $\Sigma_{l}\binom{5+l^2}{4}$, where
 $l$ runs over all lines contained in the surface $S\subset\PP^5$  
 having self-intersection $\leq -2$. Thus \eqref{formula4secants} is a strict inequality if and only if 
 $S$ contains a line with self-intersection $\leq -6$.
 }
 \begin{align}
 \nonumber \nu \leq& (1/8)\,{\lambda}^4-(1/2)\,{\lambda}^2\,{\Delta}\,{d}+(1/2)\,{\Delta}^2\,{d}^2+(3/4)\,{\lambda}^3-3\,{\lambda}^2\,{g}+(1/2)\,{\lambda}\,{\Delta}\,{d} \\
\label{formula4secants} & +5\,{g}\,{\Delta}\,{d}+3\,{\lambda}^2\,{a}-6\,{\Delta}\,{d}\,{a}-(433/8)\,{\lambda}^2+20\,{\lambda}\,{g}+13\,{g}^2+(25/2)\,{\Delta}\,{d} \\
\nonumber & -7\,{\lambda}\,{a}-30\,{g}\,{a}+18\,{a}^2+(2825/4)\,{\lambda}-98\,{g}-21\,{a}-2969 .
  \end{align}
Indeed the inequality \eqref{formula4secants} is obtained by applying 
to the general hyperplane section $S$ of $\B$ the formula, due to P. Le Barz (see \cite{barz-quadrisecantes,barz-multisecantes}),
calculating the number of $4$-secant lines of a surface in $\PP^5$,
and taking into account that, since $S$ is cut out by cubics, it cannot have $4$-secant lines;
see also \cite[Subsection~2.4]{note4}. 
In particular, it follows that there are a finite number of not excluded $6$-tuples $(\lambda,g,\nu,\Delta,d,a)$,
which actually are $4237$.
Let $\Gamma_{4237}^6$ denote the set of all these $6$-tuples.
In this set we have $ 11 \leq \lambda\leq 18$, $7\leq g\leq 28$, and $0\leq \nu\leq 181$.

Now we show that $K_{\R} + H_{\R}$ is nef and big in exactly one case.  
For $j=0,\ldots,3$ we define the $j$-th pluridegree of $(\R,H_{\R})$  as
$d_j(\R)={(K_{\R}+H_{\R})}^j\,{H_{\R}}^{3-j}$.
Notice that we have
$K_{\R}\,H_{\R}^2=K_{\B}\,H_{\B}^2-2\nu$, $K_{\R}^2\,H_{\R}=K_{\B}^2\,H_{\B}+4\nu$, 
and
$K_{\R}^3=K_{\B}^3-8\nu$, so that
from Lemma~\ref{numericalInvariants} we can express the pluridegrees 
of $(\R,H_{\R})$ as polynomial functions of $(\lambda,g,\nu,\Delta,d,a)$.
Now,
if $K_{\R} + H_{\R}$ is nef and big, we have the following inequalities (see \cite{beltrametti-biancofiore-sommese-loggeneral}):
\begin{equation}
 d_1(\R) - 1\geq0,\quad  d_2(\R) - 1\geq0,\quad  d_3(\R) - 1\geq0,\quad
 d_1(\R)^2 - d_{2}(\R) d_{0}(\R) \geq 0,
\end{equation}
that hence become respectively:
\begin{gather}
\label{c1}  -{\lambda}+2\,g-{\nu}-3\geq 0, \\ 
\label{c2} {\Delta}\,d-42\,{\lambda}+18\,g+{\nu}-12\,a+326\geq 0, \\ 
\label{c3} {\lambda}^2-199\,{\lambda}+62\,g-{\nu}-{\Delta}-48\,a+1674\geq 0, \\ 
\label{c4} -{\lambda}\,{\Delta}\,d-{\nu}\,{\Delta}\,d+43\,{\lambda}^2-22\,{\lambda}\,g+4\,g^2+43\,{\lambda}\,{\nu}-22\,g\,{\nu} \\ \nonumber \quad \quad
+12\,{\lambda}\,a+12\,{\nu}\,a-323\,{\lambda}-8\,g-323\,{\nu}+4 \geq 0 .
\end{gather}
The $6$-tuples $(\lambda,g,\nu,\Delta,d,a)$, belonging to the set $\Gamma_{4237}^6$ 
that satisfy \eqref{c1}, \eqref{c2}, \eqref{c3}, and \eqref{c4} are the two following:
$(14,15,0,1,5,0)$ and $(18,28,0,3,3,1)$. The last one does not occur:
indeed we have $d_1(\R)^2-d_2(\R) d_0(\R)=0$ and $d_2(\R)^2-d_3(\R) d_1(\R)=1521\neq 0$, and this is a contradiction by
 \cite[Lemma~1.1, (1.1.2)]{beltrametti-biancofiore-sommese-loggeneral} (see also the proof of \cite[Theorem~0.6]{note4}).
 The tuple $(14,15,0,1,5,0)$ corresponds to a Cremona transformation 
 and we can apply \cite[Theorem~0.5]{note4},
 see also Example~\ref{example Cremona 14-15}.

 We now can assume that $K_{\R} + H_{\R}$ is not nef and big. Then by \cite{sommese-adjunction-theoretic} we have that
 $(\R,H_{\R})$ has the structure of one of the following:
  \begin{enumerate}
  \item\label{e1} $(\PP^3,\O_{\PP^3}(3))$;
  \item\label{e2} $(Q^3,\O_{Q^3}(2))$;
  \item\label{e3} a Veronese fibration over a curve;
  \item\label{e4} a Mukai variety;
  \item\label{e5} a del Pezzo fibration over a curve;
  \item\label{e6} a conic bundle over a surface.
  \end{enumerate}
One see easily that 
cases \eqref{e1} and \eqref{e2} do not occur (by applying for instance \cite[Lemma~3.3]{besana-biancofiore-numerical}).
In case \eqref{e3} we have two equations:
\begin{equation}
 (2K_{\R}+3H_{\R})^3 = (2K_{\R}+3H_{\R})^2\,H_{\R}   = 0,
\end{equation}
which admit no solutions in $\Gamma_{4237}^6$.
In case \eqref{e4} we have three equations:
\begin{equation}
 K_{\R}^3+H_{\R}^3 = 
 K_{\R}^2\,H_{\R}-H_{\R}^3 = 
 K_{\R}\,H_{\R}^2+H_{\R}^3 = 0,
\end{equation}
which admit in $\Gamma_{4237}^6$ the unique solution $(11,8,3,4,3,2)$,
corresponding to a cubo-cubic transformation into a complete intersection of two quadrics, see Example~\ref{example G-1-5}.
In case \eqref{e5} we have two equations and an inequality:
\begin{equation}\label{delPez0}
d_3(\R) = 0,\quad  d_2(\R) = 0, \quad \mbox{ and } \quad d_1(\R) - 1 \geq 0 .
\end{equation}
The tuples in the set $\Gamma_{4237}^6$ that satisfy \eqref{delPez0} are the two following: $(12,10,0,3,3,1)$ and
$(12,10,1,2,4,1)$.
Moreover one has
that in these two cases $(\R,H_{\R})$ is a fibration over $\PP^1$ 
whose generic fibre is a del Pezzo surface of degree $6$ and $5$, respectively
(see \cite[p.~13]{besana-biancofiore-numerical}; see also \cite[Proposition~5.16]{threefoldsdegree12}).
Both these cases are possible, see Examples~\ref{cubo-cubic into cubic} and \ref{cubo-quartic into quadric}.
Finally, in case \eqref{e6} we have one equation and 
two inequalities:  
\begin{equation}\label{coninBunl0}
 d_3(\R) = 0, \quad d_2(\R) - 1 \geq 0,\quad  \mbox{ and }\quad   d_1(\R)^2 - d_2(\R)\,d_0(\R) \geq 0  . 
\end{equation}
There is only one tuple in $\Gamma_{4237}^6$ that satisfy \eqref{coninBunl0}, which is $(13,12,0,1,5,0)$.
This  corresponds to a Cremona transformation 
and we can apply \cite[Theorem~0.5]{note4},
 see also Example~\ref{example Cremona 13-12}.
This conclude the proof.
\end{proof}

\begin{remark}
 The proof 
 of Theorem~\ref{thm main}
 can be easily translated into a computer program. 
 Our code written for {\sc Macaulay2} \cite{macaulay2}
 is available as an ancillary file
 included in the arXiv submission.
 Although our code is not optimal, 
  it executes all of the proof in a few seconds.
\end{remark}

\begin{corollary}\label{cor main}
Table~\ref{table: SCTintCompl} classifies all
the special birational transformations of type $(3,d)$, $d>1$, 
into a 
factorial complete intersection 
and whose base locus has dimension at most three. 
\end{corollary}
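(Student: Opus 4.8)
The plan is to read the statement off from Corollary~\ref{casiProp}, Theorem~\ref{thm main}, and the already available classification of special Cremona transformations whose base locus has dimension at most two. By Corollary~\ref{casiProp}, a special birational transformation of type $(3,d)$ with $d>1$ whose base locus has dimension at most three must fall into one of the cases \eqref{nonquad1}, \eqref{cre1}, \eqref{cre3}, \eqref{caso3propBNcre}, \eqref{caso3propBcre}; equivalently, it corresponds to one of the rows of Table~\ref{table: preliminaryClass} with $d_1=3$ and $r\leq 3$. So it is enough to go through these five cases.

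First, cases \eqref{cre1} and \eqref{cre3} are special Cremona transformations (so here $\ZZ=\PP^n$, which we count as the trivial complete intersection, $a=0$) with base locus of dimension one and two respectively, and are therefore settled by \cite{crauder-katz-1989} (see also \cite{katz-cubo-cubic,crauder-katz-1991}): case \eqref{cre1} is the cubo-cubic Cremona transformation of $\PP^3$, whose base locus is a smooth genus-$3$ sextic curve (consistently, Lemma~\ref{cond hilbert pol} here gives $\chi(\O_{\B}(3))=16$ and $\chi(\O_{\B}(2))=10$, forcing degree $6$ and genus $3$), and case \eqref{cre3} is the cubo-quadric Cremona transformation of $\PP^4$, whose base locus is a smooth quintic elliptic scroll; each contributes a single type. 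Next, cases \eqref{caso3propBNcre} and \eqref{caso3propBcre} have $n=6$ and three-dimensional base locus, and are exactly the content of Theorem~\ref{thm main} (the hypothesis $d>1$ being assumed there as well); this produces the six remaining types: the two cubo-quintic Cremona transformations ($a=0$), the cubo-cubic and the cubo-quartic transformations with $a=1$, the cubo-cubic transformation into a $(2,2)$-complete intersection of $\PP^8$ ($a=2$), and the cubo-quadric transformation into a $(2,2,2)$-complete intersection of $\PP^9$ ($a=3$), whose existence is witnessed by the constructions of Section~\ref{Examples}.

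The only case calling for a direct (but elementary) treatment is \eqref{nonquad1}: here $n=3$ and $\ZZ\subset\PP^4$ is a quadric threefold, hence $N=4$, and Lemma~\ref{cond hilbert pol} gives $\chi(\O_{\B}(3))=15$ and $\chi(\O_{\B}(2))=10$; since $\B$ is a smooth connected curve these equations force $\deg\B=5$ and genus $1$, so $\B$ is an elliptic quintic curve of $\PP^3$. Conversely, taking $\B$ to be the projection from a general point of a linearly normal elliptic quintic of $\PP^4$, one checks that $h^0(\PP^3,\I_{\B}(3))=5$ and that the linear system $|H^0(\I_{\B}(3))|$ defines a birational map of $\PP^3$ onto a smooth (hence factorial) quadric threefold with quadratic inverse; this gives the single type in this case. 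Collecting the $2+1$ types from the low-dimensional cases together with the $6$ types coming from Theorem~\ref{thm main}, we recover exactly the $9$ types of Table~\ref{table: SCTintCompl}. Since all the substantive work has already been done in Theorem~\ref{thm main} and in the quoted classification of special Cremona transformations, there is no real obstacle here; the one point that needs a moment's care is the last verification in case \eqref{nonquad1} that an elliptic quintic of $\PP^3$ actually yields a transformation onto a smooth quadric threefold and that no other numerical configuration is possible, but this is a routine check of the kind performed in Section~\ref{Examples}.
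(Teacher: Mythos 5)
Your proposal is correct and follows essentially the same route as the paper: reduce via Corollary~\ref{casiProp} to the cases with $d_1=3$, invoke Theorem~\ref{thm main} for the three-dimensional base locus, quote the classical Cremona classifications for cases \eqref{cre1} and \eqref{cre3}, and settle case \eqref{nonquad1} by Lemma~\ref{cond hilbert pol}. The only cosmetic difference is the existence argument in case \eqref{nonquad1}: the paper obtains the example by restricting a special cubo-quadric Cremona transformation of $\PP^4$ to a general hyperplane, which yields the same elliptic quintic curve you construct directly and avoids the (routine) verification you defer.
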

\begin{proof}
 If the dimension of the base locus $\B$ is three, this is the content of  Theorem~\ref{thm main}.
 In the case when the dimension of $\B$ is two, from Corollary~\ref{casiProp} we see
 that the transformation must be a cubo-quadric Cremona transformation, and then we apply \cite{crauder-katz-1989}.
If the dimension of $\B$ is one, from Corollary~\ref{casiProp} we have 
either a cubo-cubic Cremona transformation or
a cubo-quadric transformation into a quadric.
The first case is classical (cf. \cite{katz-cubo-cubic}).
In the second case we apply Lemma~\ref{cond hilbert pol};
and to get an example we can take
 the restriction to a hyperplane of
a special cubo-quadric Cremona transformation of $\PP^4$.
\end{proof}
\begin{table}[htbp]
\centering
\tabcolsep=6.8pt 
\begin{tabular}{llllllclll}
 \hline 
  & $r$  & $n$ & $a$ & $\lambda$ & $g$ & Abstract structure of $\B$ & d & $\Delta$  & Existence \\  
 \hline 
 \rowcolor{Gray}
 I & $1$ & $3$ & $0$ & $6$ & $3$ & Determinantal curve &  $3$ & $1$ &   \cite{katz-cubo-cubic}  \\ 
 II & $1$ & $3$ & $1$ & $5$ & $1$ & Elliptic curve &  $2$ & $2$ &  \cite{crauder-katz-1989}   \\ 
\rowcolor{Gray}
III & $2$ & $4$ & $0$ & $5$ & $1$ & Elliptic scroll &  $2$ & $1$ &   \cite{crauder-katz-1989}  \\ 
IV & $3$ & $6$ & $0$ & $14$ & $15$ & Pfaffian threefold &  $5$ & $1$ & Example~\ref{example Cremona 14-15}    \\ 
\rowcolor{Gray}
V & $3$ & $6$ & $0$ & $13$ & $12$ & Conic bundle over $\PP^2$ & $5$ & $1$ & Example~\ref{example Cremona 13-12}    \\
VI & $3$  & $6$ & $1$ & $12$ & $10$ & Sextic del Pezzo fibration over $\PP^1$ & $3$ & $3$ & Example~\ref{cubo-cubic into cubic} \\
\rowcolor{Gray}
VII & $3$  & $6$ & $1$ & $12$ & $10$ & \begin{tabular}{c} Blow up at one point of a quintic \\ del Pezzo fibration over $\PP^1$\end{tabular} & $4$ & $2$ & Example~\ref{cubo-quartic into quadric} \\
VIII  & $3$  & $6$ & $2$ & $11$ & $8$  & \begin{tabular}{c} Blow up at three points \\ of $\mathbb{G}(1,5)\cap\PP^9\subset\PP^9$ \end{tabular} & $3$ & $4$ & Example~\ref{example G-1-5}  \\
\rowcolor{Gray}
IX & $3$  & $6$ & $3$ & $10$ & $6$  & \begin{tabular}{c} Scroll over $\PP^2$ with four \\ double points blown up \end{tabular} & $2$ & $8$ & Example~\ref{cubo quadric into three quadrics} \\
\end{tabular}
 \caption{All the types of special birational transformations $\PP^n\dashrightarrow\ZZ\subseteq\PP^{n+a}$ 
 of type $(3,d)$, with $d>1$ and base locus $\B$ of dimension $r\leq 3$, degree $\lambda$, sectional genus $g$,
 into a factorial complete intersection $\ZZ\subseteq\PP^{n+a}$ of degree $\Delta$. } 
\label{table: SCTintCompl} 
\end{table}

\begin{remark}\label{rem Fano}
Let $\varphi:\PP^n\dashrightarrow\ZZ\subseteq\PP^{n+a}$ be a special 
birational transformation 
 of type $(3,d)$, with $d>1$ and base locus  of dimension three,
 into a prime Fano manifold $\ZZ$.
 If $\ZZ$ is not a complete intersection, then 
 $\varphi$ is a cubo-cubic transformation into $\mathbb{G}(1,4)\subset\PP^9$ and
    its base locus 
    is  a threefold of degree $10$ and sectional genus $6$ 
     with the structure of a scroll over $\PP^2$.
 \begin{proof}
  Indeed,
 by Corollary~\ref{casiProp} we see 
 that $n=6$ and the coindex $c$ of $\ZZ$ satisfies $c=5-d\leq 3$.
 Thus such manifolds $\ZZ$ are completely classified 
  \cite{fujita-polarizedvarieties,mukai-biregularclassification}
  and we
  get  conditions on the possible 
 $\Delta$ and $a$. 
    By imposing these conditions on the set $\Gamma_{2480}^5 \setminus \Gamma_{174}^5$ 
   (see Lemma~\ref{casifiniti} and the proof of Theorem~\ref{thm main}), 
   we get
  a set $\Gamma_{88}^5$ consisting of $88$ not excluded $5$-tuples
    $(\lambda,g,\Delta,d,a)$.
    Then, by proceeding as in the proof of Theorem~\ref{thm main},
    one  gets that 
    $\varphi$ is a cubo-cubic transformation into $\mathbb{G}(1,4)\subset\PP^9$ and
    its base locus 
    is as in one of the two following cases:
    \begin{itemize}
     \item  a threefold of degree $10$ and sectional genus $6$ 
     with the structure of a scroll over $\PP^2$;
     \item  a minimal threefold of log-general type of degree $16$ 
     and sectional genus $23$.
    \end{itemize}
The first case actually occurs: the map defined
by the maximal minors of a generic $3\times 5$ matrix of linear forms on $\PP^6$
gives us an example (see also \cite[Subsection~4.4.1]{note4}).   
The second case does not occur. Indeed
such a threefold must be linked 
inside the complete intersection of three cubics 
to a smooth threefold $X\subset \PP^6$
of degree $11$ and sectional genus $13$ (see e.g. \cite[p.~423]{okonek}).
By Lemma~\ref{cond hilbert pol} and \cite[Proposition~4.7]{Hartshorne1994}, we can determine 
the Hilbert polynomial of $X$, so that in particular we deduce 
that $\chi(\mathcal{O}_{X})-\chi(\mathcal{O}_{X}(-1)) = 8$. 
But this is a contradiction by the maximal list of invariants of threefolds of degree $11$
obtained in
 \cite{beltrametti_schneider_sommese_1992,besana-biancofiore-deg11}.
 \end{proof}
 \end{remark}

\begin{remark}\label{cubicP7}
Let $\varphi:\PP^n\dashrightarrow\ZZ$ be a special 
birational transformation 
 of type $(3,d)$, with $d>1$ and base locus  of dimension four,
 into a factorial variety $\ZZ$.
From Proposition~\ref{PropDim}, it follows that
 $n=7$, the base locus of $\varphi^{-1}$ has dimension four, and
 one of the following holds:
 \begin{itemize}
  \item $\varphi$ is a cubo-cubic Cremona transformation;
  \item $\varphi$ is a cubo-quadric transformation into a variety $\ZZ$ of coindex $2$.
 \end{itemize}
To the best of the author's knowledge, 
no examples are known of such transformations.
Notice that 
they look like extensions
to $\PP^7$
of special cubic transformations of $\PP^6$. 
In \cite[Theorem~0.6]{note4}, we showed that the base locus of a  
special Cremona transformation of $\PP^7$ must necessarily be a fourfold 
of degree $12$ and sectional genus $10$
 with a structure of
 sextic del Pezzo fibration over $\PP^1$.
\end{remark} 
 \subsection{Cubo-linear birational transformations}
 In this subsection,
we describe the special cubic birational transformations 
 into a factorial complete intersection and whose base locus has dimension at most three
 that are not considered in Corollary~\ref{cor main} and Table~\ref{table: SCTintCompl}.
The following result tells us that there are no relevant transformations.
 
 \begin{theorem}\label{thm cubo-linear}
 Let $\varphi:\PP^n\dashrightarrow\ZZ\subseteq \PP^{n+a}$ be a special  birational transformation 
 of type $(3,1)$ into a  factorial
  complete intersection and whose base locus $\B$ has dimension $r\leq3$.
  Then $n\in\{3,4,5\}$, $\B$ has codimension $2$, degree $5$, and sectional genus $2$,
  and $\ZZ\subset\PP^{n+2}$ is a complete intersection of two quadrics.
  Moreover, if $n=4$ then $\B$ is 
the image of $\PP^2$ via the linear system of quartic curves with $7$ simple base points and one double point;
if $n=5$ then $\B$  admits the structure of quadric fibration over $\PP^1$.
\end{theorem}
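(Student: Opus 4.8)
The plan is to mimic the structure of the proof of Theorem~\ref{thm main}, but now with the extra datum that $d=1$, which by Remark~\ref{rem secants hypers} forces $h^0(\PP^n,\I_{\B}(d_1-1))=h^0(\PP^n,\I_{\B}(2))=1$, \emph{i.e.} $\B$ lies on a unique quadric, and this quadric is $\Sec_{3}(\B)$. First I would record, via Proposition~\ref{PropDim} (formulas \eqref{formulasDim2}), that $d_2=1$ gives $n-r'-1 = r-c+2$ and $d_1=3$ gives $r'+2=3(n-r-1)$. Since $r\le 3$ and $c=\sum e_i-a\ge a$ with (from the complete intersection relation $a=\sum e_i+d-5$, here $d=1$) $a = c+a-4$, hence $c=4$, contradicting $c\le r+2\le 5$ unless $r=3$; more carefully, combining $c=4$ with $n-r'-1=r-2$ and $r'+2=3(n-r-1)$ yields a short Diophantine system whose only solutions have $\B$ of codimension $2$ (so $n=r+2\in\{3,4,5\}$) and $\ZZ$ of codimension $a=2$ with $e_1=e_2=2$, \emph{i.e.} an intersection of two quadrics in $\PP^{n+2}$. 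This pins down $\Delta=4$ and $c=2$.

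Next I would extract the numerical invariants of $\B$. For $r=3$, $n=6$ this is exactly the $d=1$ branch of Lemmas~\ref{numericalInvariants} and \ref{casifiniti}: the $4$-tuples $(\lambda,g,\Delta,a)$ live in $\Gamma_{1139}^4$, and imposing $\Delta=4$, $a=2$ (equivalently the complete intersection conditions as in the start of the proof of Theorem~\ref{thm main}) together with $h^0(\I_{\B}(2))=1$ — which by \eqref{formuleHilbPol} forces $\chi(\O_{\B}(2))=\binom{8}{2}-1 = 27$ — should cut the list down drastically. One then runs the same adjunction-theoretic machine: show $|K_{\B}+2H_{\B}|$ is base-point free (ruling out scrolls over a curve by the equations $(K_{\B}+3H_{\B})^3=(K_{\B}+3H_{\B})^2H_{\B}=0$), then that $K_{\B}+2H_{\B}$ is nef and big (ruling out del Pezzo varieties, quadric fibrations over a curve, and scrolls over a surface), pass to the reduction $(\R,H_{\R})$, bound the number $\nu$ of blown-up points by the Le~Barz $4$-secant formula \eqref{formula4secants} (legitimate since $\B$, being cut out by cubics, has no $4$-secant lines), and finally analyze $K_{\R}+H_{\R}$. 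For the lower-dimensional cases $r=1$ ($n=3$) and $r=2$ ($n=4,5$) I would instead argue directly: degree $\lambda$, sectional genus $g$ and the Hilbert polynomial are determined by Lemma~\ref{cond hilbert pol} (with $d_2=1$, so $\chi(\O_{\B}(3))=\binom{n+3}{3}-(n+3)$, $\chi(\O_{\B}(2))=\binom{n+2}{2}-1$, and $\chi(\O_{\B}(3-j))=\binom{n+3-j}{3-j}$ for $2\le j\le n-r'-1$), which should immediately force $\lambda=5$, $g=2$ and identify $\B$ as a quintic elliptic-normal-type scroll / quadric-fibration as claimed. For $n=4$ the surface $\B\subset\PP^4$ of degree $5$, sectional genus $2$ with the right Hilbert polynomial is classically the quintic del Pezzo-type surface, \emph{i.e.} $\PP^2$ embedded by quartics through $7+1$ (simple + one double) base points; for $n=5$ the threefold is realized as a quadric fibration over $\PP^1$, which I would exhibit by a short construction (e.g. the quintic scroll / the variety linked to a plane in the intersection of two quadrics in $\PP^7$).

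For the reduction step in the $r=3$ case, once $K_{\B}+2H_{\B}$ is nef and big and the reduction $(\R,H_{\R})$ exists, I would check whether $K_{\R}+H_{\R}$ is nef and big using the pluridegree inequalities of \cite{beltrametti-biancofiore-sommese-loggeneral} as in \eqref{c1}--\eqref{c4}; I expect this to be \emph{impossible} here, so $(\R,H_{\R})$ falls into one of the Sommese classes \eqref{e1}--\eqref{e6}, and the corresponding polynomial equations in $(\lambda,g,\nu,\Delta,d,a)=(\lambda,g,\nu,4,1,2)$ should have no solution at all — yielding that the base locus is never three-dimensional when $a=2$, $d=1$. Actually, since $c=2$ and $\Delta=4$ are forced for \emph{all} $r$, the cleanest route is: if $r=3$ then $n=5$ by the codimension-$2$ conclusion above, contradicting $n=6$ from Lemma~\ref{numericalInvariants}; hence $r\le 2$ automatically, and one only needs the $r\le 2$ analysis. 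The main obstacle is thus not the case analysis for $r=3$ (which collapses numerically) but rather the verification in the $r=2$, $n=4$ case that the abstract surface is precisely $\PP^2$ blown up at $7$ general points plus one infinitely near point embedded by quartics — this requires matching the Hilbert polynomial and $\chi(\O_{\B})$, $K_{\B}^2$, $K_{\B}H_{\B}$ from Lemma~\ref{numericalInvariants}\eqref{surface} against the adjunction classification of smooth surfaces in $\PP^4$ of degree $5$, and then noting that the unique quadric through $\B$ together with a cubic through $\B$ define the biquadratic complete intersection $\ZZ\subset\PP^6$ via $\varphi^{-1}$ (the linear system on $\ZZ$ cutting out $\B'$). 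I would finish by writing down explicit equations over $\QQ$ for each of $n=3,4,5$, noting the $n=3$ and $n=5$ examples are restrictions/liftings of the $n=4$ one.
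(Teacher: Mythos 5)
Your opening numerical reduction is where the argument breaks, and it breaks in a way that makes you skip the hardest part of the actual proof. The relation $a=\Sigma_i e_i+d-5$ that you invoke is not a general consequence of Proposition~\ref{PropDim}: it is derived in the proof of Theorem~\ref{thm main} only for $n=6$, $r=3$, $r'=4$. What \eqref{formulasDim2} actually gives for type $(3,1)$ is $r'+2=3(n-r-1)$ and $c=2(n-r-1)$, which leaves \emph{two} families: the codimension-two cases $(r,n)\in\{(1,3),(2,4),(3,5)\}$ with $r'=1$, $c=2$, and the codimension-three case $r=3$, $n=6$, $r'=4$, $c=4$. Your ``short Diophantine system'' is internally inconsistent (you derive $c=4$ and then conclude codimension two and $c=2$), and your later shortcut ``if $r=3$ then $n=5$ by the codimension-$2$ conclusion, contradicting $n=6$'' is circular, since codimension two was never established. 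The $n=6$, $c=4$ branch is precisely what the paper has to work to kill: there the complete-intersection constraints are $\Sigma e_i=a+4$, $\Pi e_i=\Delta$, giving $(\Delta,a)\in\{(5,1),(8,2),(9,2),(12,3),(16,4)\}$ (not your $\Delta=4$, $a=2$), six surviving tuples in $\Gamma_{1139}^4$, and after the adjunction analysis three cases of minimal threefolds of log-general type remain; these are only excluded by computing the multidegree and contradicting $r'=4$. This cannot ``collapse numerically'': Remark~\ref{rem examples} exhibits genuine (non-special) cubo-linear maps of $\PP^6$ realizing exactly those invariants, so some argument beyond the numerology of Lemmas~\ref{numericalInvariants} and \ref{casifiniti} is unavoidable.

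A second, smaller gap: even in the codimension-two family, $c=2$ does not force $\ZZ$ to be an intersection of two quadrics, since a cubic hypersurface also has coindex $2$; the paper finds $(\lambda,g,\Delta,a)\in\{(5,2,4,2),(6,4,3,1)\}$ from the Hilbert-polynomial conditions of Lemma~\ref{cond hilbert pol}, and must exclude $(\Delta,a)=(3,1)$ by observing that then $\B$ would be a complete intersection of a quadric and a cubic, $\ZZ$ a cubic hypersurface with a double point, and $\varphi^{-1}$ the projection from that point, so that $\mathrm{Sing}(\ZZ)$ would \emph{equal} the base locus of $\varphi^{-1}$, violating the strictness required in the definition of special. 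Your treatment of the surviving case $(5,2,4,2)$ (Hilbert polynomial via Lemma~\ref{cond hilbert pol}, then the classification of small-invariant varieties, as in \cite{ionescu-smallinvariants}) is in line with the paper, but as written the proposal neither reaches that case correctly nor handles the two exclusions above.
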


\begin{proof}
From Proposition~\ref{PropDim} we deduce that there are two cases:
\begin{enumerate}
 \item\label{case1CuboLinear} $(r,n)\in\{(1,3),(2,4),(3,5)\}$, $r'=1$, and $c=2$, so that $(\Delta,a)\in\{(4,2),(3,1)\}$;
 \item\label{case2CuboLinear} $r=3$, $n=6$, $r'=4$, and $c=4$.
\end{enumerate}

\noindent
\emph{Case \eqref{case1CuboLinear}.} Since $r'=1$,
by Lemma~\ref{cond hilbert pol}  we have $n-1=r+1$ conditions on the Hilbert polynomial of $\B$,
and hence we can write it as a function of $a$. 
Thus we see that
$\lambda=7-a$ and $g=6-2 a$, and this leaves two cases: 
either $(\lambda,g,\Delta,a)=(5,2,4,2)$ 
or
$(\lambda,g,\Delta,a)=(6,4,3,1)$.
In the former case, we can apply \cite{ionescu-smallinvariants}.
In the latter case, $\B$ must be a complete intersection 
of a quadric and a cubic, so that  $\ZZ$ is a cubic hypersurface with a double point $p$ and 
 $\varphi^{-1}$ is the projection from $p$. This case is excluded 
 because we have that the singular locus of $\ZZ$ coincides with the base locus of $\varphi^{-1}$.
 
\noindent
\emph{Case \eqref{case2CuboLinear}.}
 Let the notation be as in Lemmas~\ref{numericalInvariants} and \ref{casifiniti}.
 If $\ZZ$ is a complete intersection of type $(e_1,\ldots,e_a)$, 
   then $\Pi_{i=1}^a e_i = \Delta$ and $\Sigma_{i=1}^a e_i = a + c = a + 4$.
   Thus we have that $(\Delta,a)\in\{(5, 1), (8, 2), (9, 2), (12, 3), (16, 4)\}$, and 
   the only $4$-tuples $(\lambda,g,\Delta,a)$ in the set  $\Gamma_{1139}^4$ 
   that satisfy this are:
$(15, 21, 16,  4)$, $(15, 20, 16, 4)$, $(14, 17, 16, 4)$, 
$(13, 14, 16, 4)$, 
$(15, 19, 12,  3)$, 
$(18, 28, 9,  2)$.
Now, we can repeat step by step the proof of Theorem~\ref{thm main} 
in order to conclude that the first three cases do not occur, 
while in the other three ones we have that
$\B$ is a minimal threefold of log-general type.
In these three remaining cases, the multidegree of $\varphi$  
is respectively: $(1, 3, 9, 14, 16, 16, 16)$,
$(1, 3, 9, 12, 12, 12, 12)$, and
$(1, 3, 9, 9, 9, 9, 9)$, from which follows that  
the dimension $r'$ of the base locus of $\varphi^{-1}$ is respectively $3$, $2$, and $1$.
This is a contradiction since we must have $r'=4$ (see also Remark~\ref{rem examples}).
\end{proof}
The following  example shows that all the cases
of Theorem~\ref{thm cubo-linear} actually occur.

 \begin{example}\label{example 2.9}
 Let $P\subset\PP^n$, $n\in\{3,4,5\}$, be a two-codimensional  subspace, and let 
 $W\subset\PP^n$ be a generic complete intersection of a quadric and a cubic containing $P$.
 Then the residual intersection 
  $X=\overline{W\setminus P}$ is a two-codimensional smooth irreducible variety
  of degree $5$ and sectional genus $2$, cut out by one quadric and two cubics.
   The linear system of cubics through $X$ defines a birational map 
  $\PP^n\dashrightarrow\ZZ\subset\PP^{n+2}$, where $\ZZ$ is a smooth
  complete intersection of two quadrics.
 \end{example}

  \begin{remark}\label{rem examples}
Here we give three examples of birational maps of type $(3,1)$ with smooth and irreducible base locus 
from $\PP^6$ 
into a complete intersection, having multidegrees 
as in the three cases considered in the final part of the proof of Theorem~\ref{thm cubo-linear}.
These maps are not special since the singular locus of the image 
coincides set-theoretically with the base locus of the inverse map.

\paragraph{(1)}
Let $X\subset\PP^6$ be a generic complete intersection of 
 one quadric and two cubics. Then $X$ is a smooth irreducible threefold 
 of degree $18$ and sectional genus $28$, and
 the linear system of cubics through $X$
 defines a birational map into a 
 complete intersection $\ZZ$ of two cubics with $\mathrm{Sing}(\ZZ)=1$.
\paragraph{(2)} Let $Y\subset\PP^4\subset\PP^6$ be a generic cubic hypersurface. Let $W\subset\PP^6$ be 
 a generic complete intersection of one quadric and two cubics containing $Y$. Then 
 the residual intersection $X=\overline{W\setminus Y}$ is a smooth irreducible threefold 
 of degree $15$ and sectional genus $19$, cut out by one quadric and three cubics. The linear system 
 of cubics through $X$ defines a  birational map 
 into a complete intersection $\ZZ$ of two quadrics and one cubic with  $\mathrm{Sing}(\ZZ)=2$. 
 \paragraph{(3)}
 Let $Y\subset \PP^5\subset \PP^6$ be a generic complete intersection of two quadrics.
 Let $W\subset\PP^6$ be 
 a generic complete intersection of one quadric and two cubics containing $Y$.
 Then the residual intersection 
 $Y' = \overline{W\setminus Y}$  is a smooth irreducible threefold 
 of degree $14$ and sectional genus $16$, cut out by one quadric and three cubics.
 The linear system of cubics through $Y'$ defines a birational map 
 into a $6$-fold $Z\subset \PP^9$ of degree $13$ and sectional genus $14$, cut out by one quadric and four cubics.
 Let $X\subset\PP^6$ be a generic three-dimensional linear section of $Z$.
 Then the linear system of cubics through $X$ defines a  birational map $\PP^6\dashrightarrow\ZZ\subset\PP^{10}$
 into a complete intersection $\ZZ$ of four quadrics 
 with $\mathrm{Sing}(\ZZ)=3$.
\end{remark}

\section{New and revised examples of special cubic transformations}\label{Examples}
In this section we give explicit constructions for
all the examples of special cubic birational transformations of $\PP^6$ 
as in Table~\ref{table: SCTintCompl}.

Most of the calculations in the examples below 
are done using {\sc Macaulay2} \cite{macaulay2} 
with the package \href{http://www2.macaulay2.com/Macaulay2/doc/Macaulay2-1.13/share/doc/Macaulay2/Cremona/html/}{\emph{Cremona}} \cite{packageCremona}; 
some others, as
singular locus computations, are done using {\sc Singular} \cite{singular}.
We point out that the version 4.2.3 or later of the {\sc Macaulay2} package \emph{Cremona}
provides the method 
\href{http://www2.macaulay2.com/Macaulay2/doc/Macaulay2-1.13/share/doc/Macaulay2/Cremona/html/_special__Cubic__Transformation.html}{\texttt{specialCubicTransformation}}, 
which takes an integer between $1$ and $9$ and returns an explicit example of
special cubic birational transformation over $\mathbb{Q}$
in accordance to Table~\ref{table: SCTintCompl}.

\begin{example}\label{cubo-cubic into cubic}
 Let $Y=\PP^1\times \PP^1 \times \PP^1 \subset \PP^7$ be the segre embedding of three $\PP^1$. 
 It is classically known 
 (see \cite{edge,ciliberto-mella-russo}) that 
 $Y$ is a threefold with 
 \emph{one apparent double point}, that is for the general point $p\in \PP^7$ 
 there is a unique secant line $L\subset \PP^7$ to $Y$ passing through $p$. 
 Using this property, we can define a Cremona involution $T:\PP^7\dashrightarrow\PP^7$, by sending $p$
 to the point $T(p)\in L$ such that $\{p,T(p)\}$ 
 is harmonically conjugate to $L\cap Y$ (see e.g. \cite[Lecture~4]{dolgachev-cremonas}).
The dual variety to $Y$ is 
a  hypersurface  defined by a quartic form $F$, 
known as \emph{Cayley's hyperdeterminant}, 
and the involution $T$ coincides with the map defined by the partial derivatives of $F$.\footnote{For 
further computational details, see 
the online documentation of the methods 
\href{http://www2.macaulay2.com/Macaulay2/doc/Macaulay2-1.13/share/doc/Macaulay2/Cremona/html/_abstract__Rational__Map.html}{\texttt{abstractRationalMap}} 
from \href{http://www2.macaulay2.com/Macaulay2/doc/Macaulay2-1.13/share/doc/Macaulay2/Cremona/html/index.html}{\texttt{Cremona}}
\cite{packageCremona}, 
and 
\href{http://www2.macaulay2.com/Macaulay2/doc/Macaulay2-1.13/share/doc/Macaulay2/Resultants/html/_dual__Variety.html}{\texttt{dualVariety}}
from 
\href{http://www2.macaulay2.com/Macaulay2/doc/Macaulay2-1.13/share/doc/Macaulay2/Resultants/html/index.html}{\texttt{Resultants}}
\cite{packageResultants}.}
 The base locus of $T$ is a reducible fourfold $X\subset\PP^7$ 
 of degree $12$ and sectional arithmetic genus $10$ 
 consisting of 
  the union of three $\PP^1\times \PP^3\subset\PP^7$ 
intersecting pairwise in $Y$. Thus, 
although $T$ is far from being special, it has the same invariants as 
a hypothetical special Cremona transformation of $\PP^7$, by \cite[Theorem~0.6]{note4};
see also Remark~\ref{cubicP7}.

Now we deform $X$ by applying generic linkages.
Take $W$ to be a generic complete intersection  of three cubics containing $X$.
Then one has that $W = X\cup X'$, 
where $X'\subset\PP^7$ 
is an irreducible fourfold of degree $15$ and sectional genus $16$ 
cut out by $5$ cubics.
Take now $W'$ to be a generic complete intersection  of three cubics containing $X'$.
Then one has that $W' = X'\cup \widetilde{X}$, 
where $\widetilde{X}\subset\PP^7$ is
an irreducible 
fourfold of degree $12$ and sectional genus $10$, cut out 
by $9$ cubics that define a Cremona transformation $\widetilde{T}:\PP^7\dashrightarrow\PP^7$.
Unfortunately, $\widetilde{X}$ is still singular and hence $\widetilde{T}$ is not special.
However, the singular locus of $\widetilde{X}$ consists of only a finite number of points, 
so that the restriction of $\widetilde{T}$ to a general hyperplane $\PP^6\subset\PP^7$
gives us an example of special cubo-cubic birational transformation $\PP^6\dashrightarrow\ZZ\subset\PP^7$ 
into a cubic hypersurface $\ZZ$,
as in line VI of Table~\ref{table: SCTintCompl}. One verifies that the singular locus of $\ZZ$ has dimension one.
\end{example}

\begin{example}\label{cubo-quartic into quadric}
Let $\mathbb{S}^{10}\subset\PP^{15}$ be the
$10$-dimensional 
Spinor variety 
parametrizing one of the families of $\PP^4$'s contained in a smooth quadric in $\PP^9$.
Let $Y = \mathbb{S}^{10}\cap\PP^8\subset\PP^8$ be the intersection of $\mathbb{S}^{10}\subset\PP^{15}$
with a generic $\PP^8\subset\PP^{15}$.
So that $Y\subset\PP^8$ is a smooth threefold of degree $12$ and sectional genus $7$, cut out 
by $10$ quadrics (these $10$ quadrics define 
a special quadro-quartic birational transformation into a smooth quadric as in line XIII of Table~\ref{table: SCTintComplQuad}). 
Let $Y'\subset\PP^7$ be the projection of $Y$ in $\PP^7$ from a general point on $Y$.
Then $Y'$ is a smooth threefold of degree $11$ and sectional genus $7$, cut out by $5$ quadrics.
Take $W$ to be a generic complete intersection of type $(2,2,2,3)$ containing $Y'$.
We have that $W = Y'\cup X'$, where $X'\subset\PP^7$ is 
a smooth threefold of degree $13$ and sectional genus $10$, cut out by four quadrics and two cubics.
Let now $X\subset\PP^6$ be the projection of $X'$ in $\PP^6$ from a general point on $X'$.
Then $X\subset\PP^6$ is a smooth threefold of degree $12$ and sectional genus $10$, cut out by $8$ cubics.
These $8$ cubics define a special cubo-quartic birational transformation $\PP^6\dashrightarrow\ZZ\subset\PP^7$
into a quadric hypersurface $\ZZ$ singular along a line. Thus we have an example 
of transformation as in line VII of Table~\ref{table: SCTintCompl}.
\end{example}

\begin{example}\label{example G-1-5}
 Let $\mathbb{G}(1,5)\subset\PP^{14}$ be the Grassmanniann of lines in $\PP^5$.
 Let $Y = \mathbb{G}(1,5)\cap\PP^9\subset\PP^9$ be the intersection of $\mathbb{G}(1,5)\subset\PP^{14}$
with a generic $\PP^9\subset\PP^{14}$.
So that $Y\subset\PP^9$ is a smooth threefold of degree $14$ and sectional genus $8$, cut out 
by $15$ quadrics.
Let $X\subset\PP^6$ be the projection of $Y$ in $\PP^6$ from a plane  generated by 
three general points on $Y$.
Then $X\subset\PP^6$ is a smooth threefold of degree $11$ and sectional genus $8$, cut out by $9$ cubics.
These $9$ cubics define a special cubo-cubic birational transformation $\PP^6\dashrightarrow\ZZ\subset\PP^8$
into a complete intersection of two quadrics which has singular locus of dimension one.
Thus we have an example 
of transformation as in line VIII of Table~\ref{table: SCTintCompl}.
(Let us recall that the projection of $Y$ in $\PP^8$ from a general point on $Y$ 
is a threefold cut out by $9$ quadrics that define a special quadro-quintic Cremona transformation of $\PP^8$
as in line X of Table~\ref{table: SCTintComplQuad}.)
\end{example}

\begin{example}\label{cubo quadric into three quadrics}
 Here we give an example
of a special cubo-quadric birational transformation 
 into a complete intersection of three quadrics in $\PP^9$, as in line IX of Table~\ref{table: SCTintCompl}.
 This example has already been constructed in \cite[Subsection~4.4.2]{note4} but here 
 we provide 
 a slightly different construction.
 The base locus of this transformation 
 is a threefold scroll  in lines of degree $10$ and sectional genus $6$ whose
  existence  had been left undecided
 in \cite{fania-livorni-ten}.

 The main idea in \cite{note3}
 provides an algorithm to construct special birational transformations 
 whose base locus is a threefold scroll
 $X=\PP_Y(\mathcal{E})$ over a surface $Y$. In our specific case,
 the steps to follow are the following:
 \begin{itemize}
  \item Take the
  map $f:\PP^2\dashrightarrow\PP^{15}$ defined by the 
  linear system 
  of sextic curves with $4$ general double points. Then we have $Y=\overline{f(\PP^2)}$.
  \item Take $10$ general points
  $p_1,\ldots,p_{10}$
  on $\PP^2$ and let $h:\PP^{15}\dashrightarrow\PP^5$ be the projection 
  from the linear span of the $10$ points $f(p_1),\ldots,f(p_{10})$.
  Then $S = \overline{(h\circ f)(\PP^2)}$ is a general hyperplane section of $X$.
  \item  Let 
  $\psi:\PP^5\dashrightarrow\PP^{10}$ be 
  the map defined by the cubics through $S$. This is  a cubo-quadric birational map 
  into a complete intersection of $4$ quadrics. Then the linear system generated
  by these $4$ quadrics together with the $6$ quadrics defining the inverse of $\psi$ defines 
  a quadro-cubic Cremona transformation of $\PP^9$.
  \item Compute the inverse $\Psi:\PP^9\dashrightarrow\PP^9$
  of the Cremona transformation in the previous step. Then $\Psi$ is an extension map of $\psi$,
  and its base locus is an irreducible $6$-fold of degree $10$,
  sectional genus $6$, and with singular locus of dimension $2$.
  By restricting $\Psi$ to a general $\PP^6\subset\PP^9$ 
  we get a special birational transformation $\PP^6\dashrightarrow\ZZ\subset\PP^9$
  into a complete intersection  $\ZZ$ of three quadrics; the singular locus
  of $\ZZ$ has dimension one.
 \end{itemize}
\end{example}


\begin{example}[Subsection~4.2 of \cite{ein-shepherdbarron}; see also Subsection~4.1 of \cite{note4}]\label{example Cremona 14-15}
Let $\mathbb{G}(1,6)\subset\PP^{20}$ be the Grassmannian of lines in $\PP^6$,
and let $\mathrm{Sec}(\mathbb{G}(1,6))$ denote the variety of secant lines to $\mathbb{G}(1,6)$.
We have that $\mathrm{Sec}(\mathbb{G}(1,6))\subset\PP^{20}$ is an irreducible 
variety of codimension three, degree $14$, sectional genus $15$,
cut out by $8$ cubics, and with singular locus equal to $\mathbb{G}(1,6)$.
The intersection $\mathrm{Sec}(\mathbb{G}(1,6))\cap\PP^6\subset\PP^6$ 
of $\mathrm{Sec}(\mathbb{G}(1,6))$ with a generic $\PP^6\subset\PP^{20}$ is a smooth 
threefold of degree $14$ and sectional genus $15$ which is the base locus of a special 
cubo-quintic Cremona transformation of $\PP^6$, hence as in   line IV of Table~\ref{table: SCTintCompl}.
Notice that the equations of $\mathrm{Sec}(\mathbb{G}(1,6))\subset\PP^{20}$ 
are given by
the Pfaffians of the principal $6\times6$ minors of a generic $7\times7$ 
skew-symmetric matrix of variables.
\end{example}

\begin{example}[Subsection 4.2 of \cite{note4}]\label{example Cremona 13-12}
Here, 
we recall and simplify the construction
 of a special Cremona 
transformation of $\PP^6$ whose base locus is a threefold of degree $13$ and sectional genus $12$,
hence as in line V of Table~\ref{table: SCTintCompl}.

We first construct an irreducible singular threefold in $\PP^6$ of degree $13$ and sectional genus $12$.
 Let $C\subset\PP^3$ be the 
 isomorphic projection of the 
quintic rational normal curve 
together with an embedded point $p_0$, and 
let $p_1,\ldots,p_5$ be $5$ general points 
on a general plane passing through $p_0$.
Then the homogeneous ideal 
of the non-reduced scheme $C\cup\{p_1,\ldots,p_5\}\subset\PP^3$ is 
generated by $7$ quartics,
which define a map $\PP^3\dashrightarrow\PP^6$. The image of this map is
an irreducible  threefold $X\subset\PP^6$ of degree $13$ and sectional genus $12$, cut out by $7$ cubics,
and with singular locus of dimension two.

Now we proceed as in Example~\ref{cubo-cubic into cubic} in order to deform the threefold $X$ 
into another threefold $\widetilde{X}$.
Let $W\subset\PP^6$ be a generic complete intersection 
of three cubics containing $X$. Then we have $W = X\cup X'$, where
$X'\subset\PP^6$ is an irreducible threefold of degree $14$ and sectional genus $14$, cut out by $6$ cubics.
Now let $W'\subset\PP^6$ be a generic complete intersection of three cubics containing $X'$.
Then we have $W' = X'\cup\widetilde{X}$, where $\widetilde{X}\subset\PP^6$
is an irreducible threefold of degree $13$ and sectional genus $12$, cut out by $7$ cubics, 
and with only one singular point. 
Even if we repeat the procedure starting from $\widetilde{X}$ we still get one singular point. 

We remove the singularity of $\widetilde{X}$ in this way.
Take $S\subset\PP^5$ to be a generic hyperplane section of $\widetilde{X}$. 
The cubics through $S$ define a cubo-quintic birational map into a quintic hypersurface.
Taking together the quintic defining the image and the $6$ quintics defining the inverse, we get
a linear system of quintics on $\PP^6$ which defines a quinto-cubic Cremona transformation.
The inverse of this map has as base locus a smooth irreducible threefold of degree $13$
and sectional genus $12$.
\end{example}

\section{Connection with cubic fourfolds}\label{cubicFourfolds}

In all our examples of special cubic birational transformations of $\PP^6$ (see Table~\ref{table: SCTintCompl}),
a generic cubic  in the linear system defining the map 
turns out to be singular at a finite number of points.
Thus its generic hyperplane section $X\subset\PP^5$ 
is a smooth cubic hypersurface 
containing 
a very \emph{special} surface $S\subset\PP^5$, 
the general hyperplane section of the base locus of the map.

Let $\mathcal{S}$ be an irreducible component of the Hilbert scheme 
$\mathrm{Hilb}_{\mathbb{P}^5}^{\chi(\mathcal{O}_S(t))}$ 
containing $[S]$.
In lucky cases,
as it turns out to be in all our cases,
via some explicit calculations on a particular pair $(S,X)$,
and via a semicontinuity argument illustrated in \cite{nuer},
one can demonstrate that the
cubic hypersurfaces of $\PP^5$
containing a surface belonging to $\mathcal S$
form an irreducible locus in $\PP^{55}=\PP(H^0(\mathcal{O}_{\PP^5}(3)))$
 of codimension one.
 The closures in $\PP^{55}$ 
 of these loci belong to a countable family of divisors $\{C_{\delta}\}_{\delta}$
 indexed by the integers ${\delta}>6$ with ${\delta}\equiv0,2\ (\mathrm{mod}\, 6)$.
 These divisors have been introduced and studied by Hassett in \cite{Hassett,Has00} (see also \cite{Levico}).
 With our examples, we  cover the first four members of this family,
 that is $C_8$, $C_{12}$, $C_{14}$, and $C_{18}$,
 thus providing alternative geometric descriptions of them. 
 We include in Table~\ref{TabCongruenze} some more details.
 
 \begin{table}[htbp]
\centering   
\begin{tabular}{lllll}
\hline
 Transformation  & $\delta$ & $h^0(\mathcal{I}_{S/\PP^5}(3))$ & $h^0(\mathcal{N}_{S/\PP^5})$ & $h^0(\mathcal{N}_{S/X})$ \\
\hline 
Remark~\ref{rem Fano}  & $8$ & $10$ & $57$ & $12$ \\
Example~\ref{example Cremona 14-15} (Table~\ref{table: SCTintCompl}, line IV) & $14$ & $7$ & $77$ & $29$ \\
Example~\ref{example Cremona 13-12} (Table~\ref{table: SCTintCompl}, line V) & $14$ & $7$ & $68$ & $20$ \\
Example~\ref{cubo-cubic into cubic} (Table~\ref{table: SCTintCompl}, line VI) & $18$ & $8$ & $63$ & $16$ \\
Example~\ref{cubo-quartic into quadric} (Table~\ref{table: SCTintCompl}, line VII) & $12$ & $8$ & $65$ & $18$ \\
Example~\ref{example G-1-5} (Table~\ref{table: SCTintCompl}, line VIII) & $14$ & $9$ & $60$ & $14$ \\
Example~\ref{cubo quadric into three quadrics} (Table~\ref{table: SCTintCompl}, line IX) & $14$ & $10$ & $55$ & $10$ \\
\hline 
Example~\ref{example 7.3} & $18$ & $19$ & $39$ & $3$ \\
Example~\ref{example quadric in P13} & $18$ & $13$ & $51$ & $9$ \\
Example~\ref{threefold of degree 10 and sectional genus 7} & $14$ & $12$ & $58$ & $15$ \\
Example~\ref{example 7.6} & $14$ & $11$ & $63$ & $19$ \\
\hline 
\end{tabular}
 \caption{Surfaces $S\subset\PP^5$ contained in a cubic fourfold $[X]\in C_{\delta}$ 
 and obtained as general hyperplane section of the base locus
 of a 
 cubic 
 transformation of $\PP^6$.} 
\label{TabCongruenze} 
\end{table}

 The value of $\delta$  
 can be calculated in terms of the invariants
 of the surface $S$ (see \cite[Section~4.1]{Has00}).
Since we are assuming that $S$ is the general hyperplane section 
 of the base locus of a special cubic birational transformation,
 we can apply Lemma~\ref{numericalInvariants}\eqref{surface} to get a formula depending 
 only by the simplest invariants of the transformation.
 In the notation of Lemma~\ref{numericalInvariants}, 
 we have:
\begin{equation}\label{delta}
 \delta = -{\lambda}^2+6\,{\Delta}\,d-27\,{\lambda}+18\,g-36\,a  + 36\,\epsilon(d)  +288 .
\end{equation}
Notice that \eqref{delta} forces ${\delta}\equiv 0,2\ (\mathrm{mod}\, 6)$, but not $\delta>6$.
By imposing this last condition on the sets $\Gamma_{2480}^5$ and $\Gamma_{1139}^4$ given in 
Lemma~\ref{casifiniti}, 
and proceeding as in the proof 
of Theorem~\ref{thm main}, one can see
that no other values of $\delta$, besides $8,12,14,18$, can be achieved  
by a special birational 
transformation of $\PP^6$ 
of type $(3,d)$ into a factorial variety.

 One of the most challenging open problems in classical and modern algebraic geometry 
 is 
 the rationality of cubic fourfolds. 
 The works by Hassett, Kuznetsov, Addington, and Thomas (see \cite{kuz4fold,AT,kuz2,Levico}) 
 lead to the following
 problem (usually called \emph{Kuznetsov Conjecture}):
 \emph{The generic cubic fourfold $[X]\in C_{\delta}$ is rational 
 if and only if $\delta$ is an admissible value.}
 The admissible values are the even integers $\delta>6$ not 
 divisible by $4$, by $9$ and nor 
 by any odd prime of the form $2+3m$, so that the first admissible values are $14,26,38$.
 The rationality for $C_{14}$ 
 was shown in the classical works by Morin and Fano (see \cite{Morin,Fano}; see also \cite{BRS}), and in the recent 
 paper \cite{russo-stagliano-duke},
 Russo and ourselves showed
  the rationality for $C_{26}$ and $C_{38}$.
  The decisive step of our discovery was to find a description
 for $C_{\delta}$ 
 in terms of some surface $S$ contained in the generic $[X]\in C_{\delta}$ 
 and which admits (for some $e\geq1$) a \emph{congruence of $(3e-1)$-secant rational  curves of degree $e$},
 that is, 
 through a general point $p\in\PP^5$ there passes a unique rational  curve $C_p$ of degree $e$
 which is $(3e-1)$-secant to $S$; see \cite{russo-stagliano-duke} for precise and more general definitions.
 
 This property
 is a very rare phenomenon for the surfaces in $\PP^5$, 
 but not so rare for our surfaces.
 Indeed, the surfaces corresponding to the lines IV and V 
 of Table~\ref{table: SCTintCompl} 
 admit a congruence of $14$-secant  rational normal quintic curves (see \cite[Subsection~4.3]{Explicit});
 and the surface corresponding to the line VIII admits a congruence 
 of $8$-secant twisted cubics (see \cite[Subsection~4.2]{Explicit}).
 Furthermore, the isomorphic projection in $\PP^5$ of the surface corresponding to the line V of Table~\ref{table: SCTintComplQuad} 
 admits a congruence of $5$-secant conics (see \cite[Theorem~2]{russo-stagliano-duke});
 and the surface of $\PP^5$,
 base locus of the inverse map corresponding to the line II of Table~\ref{table: SCTintComplQuint}, 
 admits a congruence of $2$-secant lines \cite{Fano}.

\section{Special quadratic birational transformations}\label{sec quadratic}
For the convenience of the reader,
in this section we collect some of the results concerning 
special quadratic birational transformations.
For proofs and details, see \cite{note,note2,note3}.

\begin{theorem}[\cite{note,note2,note3}]\label{cor main quadratic}
Table~\ref{table: SCTintComplQuad} classifies all
the special birational transformations of type $(2,d)$, $d>1$, 
into a 
factorial complete intersection 
and whose base locus has dimension at most three. 
\end{theorem}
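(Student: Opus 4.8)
The plan is to assemble the statement from classifications that are already available, using Corollary~\ref{casiProp} as the organizing device. That corollary (equivalently, Table~\ref{table: preliminaryClass}) lists the finitely many numerical types of a special birational transformation $\varphi\colon\PP^n\dashrightarrow\ZZ$ of type $(2,d)$ with $d>1$ and base locus of dimension $r\le 3$: for $r=1$ these are the cases \eqref{quad1} and \eqref{cre2}; for $r=2$ they are \eqref{cre5}, \eqref{quad2}, and \eqref{cre6}; and for $r=3$ they are \eqref{caso3propC}, \eqref{caso3propDNcre}, and \eqref{caso3propDcre}. Since the image of a Cremona transformation is $\PP^n$, which is a (degenerate) complete intersection, whereas in the remaining cases $\ZZ$ is a proper subvariety, I would treat the Cremona and the non-Cremona cases separately.

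For the Cremona cases \eqref{cre2}, \eqref{cre5}, \eqref{cre6}, and \eqref{caso3propDcre} I would invoke the complete classification of special Cremona transformations with base locus of dimension at most three, due to Crauder--Katz \cite{crauder-katz-1989} (see also \cite{katz-cubo-cubic,crauder-katz-1991,hulek-katz-schreyer}, and the refinements in \cite{note,note2,note3}); this yields the corresponding rows of Table~\ref{table: SCTintComplQuad}, together with the abstract structure of the base locus in each of them. For the non-Cremona cases \eqref{quad1}, \eqref{quad2}, \eqref{caso3propC}, and \eqref{caso3propDNcre}, $\varphi$ is a genuinely quadratic transformation into a proper subvariety, and I would appeal directly to the classification of special quadratic birational transformations into a factorial variety carried out in \cite{note,note2,note3}; restricting that list to those $\varphi$ whose image is moreover a \emph{complete intersection} leaves exactly the remaining rows of Table~\ref{table: SCTintComplQuad}. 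Each row also requires an existence statement, which is supplied by the explicit model exhibited in \emph{loc.\ cit.} (typically a projection of a rational normal scroll or of a linear section of a Grassmannian, or the restriction to a general linear section of a known Cremona transformation).

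The real content behind these citations is the work of \cite{note2,note3}: there one first cuts the problem down to a finite numerical list using the constraints of Proposition~\ref{PropDim} together with the Hilbert-polynomial relations of Lemma~\ref{cond hilbert pol}, and then, for three-dimensional base loci, one runs an adjunction-theoretic analysis of the pair $(\B,H_{\B})$ entirely parallel to the proof of Theorem~\ref{thm main}; one also verifies in each surviving case that the image $\ZZ$ is not merely factorial but a factorial complete intersection, with singular locus strictly smaller than the base locus of $\varphi^{-1}$. Granting all of this, what remains is bookkeeping: checking that the union of the Cremona list with the non-Cremona-into-complete-intersection list, duplicates removed, is precisely the $15$ entries of Table~\ref{table: SCTintComplQuad}. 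The point that needs the most care — the ``hard part'' in the present recap — is to be sure that imposing the complete-intersection hypothesis on the general factorial classification of \cite{note,note2,note3} neither drops nor duplicates any type, i.e.\ that this condition is exactly what singles out those $15$ rows.
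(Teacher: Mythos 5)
Your proposal matches the paper's treatment: Theorem~\ref{cor main quadratic} is presented there as a collection of results from \cite{note,note2,note3}, with no new argument beyond citing those classifications, organized (as you do) through the case list of Corollary~\ref{casiProp} and Remark~\ref{analisi}, together with the existence references recorded in Table~\ref{table: SCTintComplQuad}. The only slip is attributional, not logical: Crauder--Katz \cite{crauder-katz-1989} covers only the Cremona cases with base locus of dimension at most two, while the three-dimensional quadro-quintic Cremona case \eqref{caso3propDcre} is classified in \cite{note2,note3} (see also \cite{hulek-katz-schreyer,russo-qel1}), references you already include, so nothing essential is missing.
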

 \begin{table}[htbp]
\centering
\tabcolsep=7.3pt 
\begin{tabular}{llllllclll}  
\hline
& $r$ & $n$ & $a$ & $\lambda$ &  $g$  &  Abstract structure of $\B$ & $d$ & $\Delta$  & Existence   \\  
\hline
\rowcolor{Gray}
I &$1$ & $4$ & $0$ & $5$ & $1$ & Elliptic curve & $3$ & $1$ &   \\
II & $1$ & $4$ & $1$ & $4$ & $0$ & Rational normal curve & $2$ & $2$ &  \\ 
\rowcolor{Gray}
III & $2$ & $5$ & $0$ & $4$ & $0$  &  Veronese surface & $2$ & $1$ & \cite{semple-roth} \\ 
IV & $2$ & $6$ & $0$ & $7$ & $1$  & Elliptic scroll  & $4$ & $1$ & \cite{semple-tyrrell} \\ 
\rowcolor{Gray}
V & $2$ & $6$ & $0$ & $8$ & $3$  & Blow up of $\PP^2$ at $8$ simple points & $4$ & $1$ & \cite{cite2-semple} \\ 
VI & $2$ & $6$ & $1$ & $7$ & $2$  & \begin{tabular}{c}  Blow up of $\PP^2$ at $5$ simple points \\ and one double point \end{tabular}  & $3$ & $2$ & \cite{note} \\ 
\rowcolor{Gray}
VII & $2$ & $6$ & $2$ & $6$ & $1$  & Blow up of $\PP^2$ at $3$ simple points & $2$ & $4$ & \cite{note2} \\ 
VIII & $3$ & $7$ & $1$ & $6$ & $1$  & Hyperplane section of $\PP^2\times\PP^2\subset\PP^8$ & $2$ & $2$ & \cite{ein-shepherdbarron}  \\ 
\rowcolor{Gray}
IX & $3$ & $8$ & $0$ & $12$ & $6$  & Scroll over rational surf. with $K^2=5$   & $5$ & $1$  &  \cite{note3} \\
X & $3$ & $8$ & $0$ & $13$ & $8$    & \begin{tabular}{c} Blow up at one point \\ of $\mathbb{G}(1,5)\cap\PP^9\subset\PP^9$ \end{tabular}  & $5$ & $1$ & \cite{hulek-katz-schreyer}  \\
\rowcolor{Gray}
XI & $3$ & $8$ & $1$ &  $11$   &  $5$  &  Blow-up of $Q^3$ at $5$ points  & $3$ & $3$ & \cite{note} \\
XII & $3$ & $8$ & $1$ &  $11$   &  $5$  & Scroll over $\mathbb{F}_1$   & $4$ & $2$ & \cite{note3} \\
\rowcolor{Gray}
XIII & $3$ & $8$ & $1$ &  $12$   &  $7$ & Linear section of the spin. $\mathbb{S}^{10}\subset\PP^{15}$  & $4$ & $2$ &  \cite{ein-shepherdbarron} \\ 
XIV & $3$ & $8$ & $2$ & $10$ & $4$ & Scroll over $Q^2$ & $3$ & $4$  &  \cite{note3} \\
\rowcolor{Gray}
XV & $3$ & $8$ & $3$ & $9$ & $3$ & Scroll over $\PP^2$ & $2$ & $8$  &  \cite{note3} \\
\end{tabular}
 \caption{All the types of special birational transformations $\PP^n\dashrightarrow\ZZ\subseteq\PP^{n+a}$ 
 of type $(2,d)$, with $d>1$ and base locus $\B$ of dimension $r\leq 3$, degree $\lambda$, sectional genus $g$,
 into a factorial complete intersection $\ZZ\subseteq\PP^{n+a}$ of degree $\Delta$. } 
\label{table: SCTintComplQuad} 
\end{table}

\begin{remark}\label{rem quadratic}
Let $\varphi:\PP^n\dashrightarrow\ZZ\subseteq\PP^{n+a}$ be
  a special birational transformation of type $(2,d)$, with $d>1$ 
  and base locus of dimension three,
  into a prime Fano manifold $\ZZ$. 
  If $\ZZ$ is not a complete intersection, then
  we immediately deduce 
   from the classification in \cite[Table~1]{note3}
  that
  $\varphi$ is a quadro-quadric transformation of $\PP^8$ into $\mathbb{G}(1,5)\subset\PP^{14}$
  and its base locus is a rational normal scroll of degree $6$.
  This kind of transformations has been classically studied in \cite{semple}.
\end{remark}

By Remark~\ref{rem secants hypers}, 
the following proposition follows easily from results on \emph{quadratic entry locus varieties},
which are proved in \cite{russo-qel1,ciliberto-mella-russo,ionescu-russo-conicconnected}.
\begin{proposition}[\cite{note2}]\label{prop quadro-linear}
 Let $\varphi:\PP^n\dashrightarrow\ZZ\subseteq \PP^{n+a}$ be a special  birational transformation 
 of type $(2,1)$ into a  factorial
  complete intersection and whose base locus $\B$ has dimension $r\leq3$.
  Then $n\in\{3,4,5\}$, $\B\subset\PP^{n-1}\subset\PP^n$ is a quadric of codimension $2$,
  and $\ZZ\subset\PP^{n+1}$ is a quadric hypersurface.
\end{proposition}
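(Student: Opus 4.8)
The plan is to reduce the problem to numerics via Proposition~\ref{PropDim}, exploit that a \emph{linear} inverse map realises $\ZZ$ as a linear projection of a linear space, and finally read off $\B$ from Remark~\ref{rem secants hypers}. First I would specialise Proposition~\ref{PropDim} to $(d_1,d_2)=(2,1)$: the formulas \eqref{formulasDim}--\eqref{formulasDim2} give $c=n-r-1$ and $r'=2(n-r-1)-2=2c-2$, where $r=\dim\B$ and $r'=\dim\B'$ with $\B'\subset\ZZ$ the base locus of $\varphi^{-1}$. Since $r'=2c-2$ is a non-negative dimension, $c\ge1$; in particular $\ZZ\neq\PP^n$, so $a\ge1$, and writing $\ZZ\subset\PP^{n+a}$ as a non-degenerate complete intersection of hypersurfaces of degrees $e_1,\dots,e_a\ge2$ we also get $c=\sum_i e_i-a\ge a$.

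Next I would use that $\varphi^{-1}$ is linear. By Remark~\ref{rem 1}, since $d_2=1$ and $\ZZ$ is linearly normal, $\varphi^{-1}$ is represented by $n+1$ linear forms on $\PP^{n+a}$ restricted to $\ZZ$; these are linearly independent (because $\varphi^{-1}$ is dominant and $\ZZ$ non-degenerate), so $\varphi^{-1}$ is the restriction to $\ZZ$ of a linear projection $\PP^{n+a}\dashrightarrow\PP^n$ whose centre $\Lambda$ is a linear subspace of dimension $a-1$. Hence its base scheme is $\B'=\ZZ\cap\Lambda\subseteq\Lambda$, so that $r'\le a-1$. Combining with the first paragraph, $2c-2=r'\le a-1\le c-1$, whence $c\le1$ and therefore $c=1$. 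This forces $a=1$, $\ZZ\subset\PP^{n+1}$ a quadric hypersurface, $r'=0$, and $r=n-2$.

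It remains to pin down $n$ and $\B$. A factorial quadric hypersurface has dimension at least $3$ (else its homogeneous coordinate ring is not a UFD), and $r=n-2\le3$, so $n\in\{3,4,5\}$. By Remark~\ref{rem secants hypers} with $d_2=1$, the variety $\B$ is contained in a unique hyperplane $\PP^{n-1}=\Sec_2(\B)\subset\PP^n$ and is non-degenerate in it; since $\dim\B=n-2=\dim\PP^{n-1}-1$, $\B$ is a hypersurface in $\PP^{n-1}$, and being the base scheme of a linear system of quadrics it is cut out there by a single quadratic form. Thus $\B\subset\PP^{n-1}\subset\PP^n$ is a quadric of codimension $2$, and $\ZZ\subset\PP^{n+1}$ a quadric hypersurface; smoothness of $\B$ together with the ``not too singular'' hypothesis (which forces $\mathrm{Sing}(\ZZ)=\emptyset$, as $\B'$ is a reduced point) finishes the description. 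More concretely one may instead note that $\varphi^{-1}$ is the stereographic projection of the smooth quadric $\ZZ$ from one of its points, and that the inverse of such a projection is visibly defined by quadrics with base locus the complete intersection of a hyperplane and a quadric.

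The step I expect to need the most care is the identification of the base scheme of $\varphi^{-1}$ with $\ZZ\cap\Lambda$ inside a centre of dimension only $a-1$: this is exactly what makes the inequality $r'\le a-1$ available, and it rests on the linear (indeed projective) normality of the factorial complete intersection $\ZZ$, so that degree-one elements of its homogeneous coordinate ring really are linear forms on the ambient space; once this is granted, the rest is pure arithmetic. As indicated just before the statement, one could alternatively deduce the result from the classification of quadratic entry locus varieties \cite{russo-qel1,ciliberto-mella-russo,ionescu-russo-conicconnected}, since Remark~\ref{rem secants hypers} exhibits $\B$ as such a variety with $\Sec(\B)$ equal to its linear span; but for $r\le3$ the elementary argument above is enough.
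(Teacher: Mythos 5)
Your argument is correct in its main thrust, and it takes a genuinely different route from the paper. The paper simply quotes its source \cite{note2} and observes that, by Remark~\ref{rem secants hypers}, the base locus is a variety whose general entry loci are quadrics and whose secant variety is a hyperplane, so the statement follows from the classification of quadratic entry locus varieties in \cite{russo-qel1,ciliberto-mella-russo,ionescu-russo-conicconnected}. You instead stay entirely inside Section~\ref{sec prima}: specializing \eqref{formulasDim2} to $(d_1,d_2)=(2,1)$ gives $c=n-r-1$ and $r'=2c-2$, and the observation that a linear inverse is the restriction to $\ZZ$ of a linear projection whose centre $\Lambda$ has dimension $a-1$ (this is indeed where projective normality of the complete intersection enters, as you flag) gives $r'\leq a-1\leq c-1$, forcing $c=a=1$, $\ZZ$ a quadric hypersurface, $r'=0$, $r=n-2$; the elementary $d_2=1$ part of Remark~\ref{rem secants hypers} then exhibits $\B$ as a degree-two hypersurface of the hyperplane $\Sec(\B)$. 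This is more self-contained than the paper's route (no input on QEL varieties), and the inequality $r'\leq\dim\Lambda$ is an efficient use of the linearity of $\varphi^{-1}$; the paper's route, by contrast, buys the statement in one line given the quoted classification.

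There is, however, one step that does not hold as written: your exclusion of $n=2$. The claim that a factorial quadric hypersurface has dimension at least $3$ is false for the (local) notion of factoriality used in the paper (cf.\ ``(locally) factorial'' in the abstract): a smooth quadric surface is locally factorial even though its homogeneous coordinate ring is not a UFD, so the lower bound on $\dim\ZZ$ cannot be extracted from factoriality alone. The case $n=2$ must be excluded by a different (easy) argument: with $r=n-2=0$ the base locus would be a single reduced point, whose secant variety cannot be the degree-one hypersurface guaranteed by Remark~\ref{rem secants hypers}; equivalently, Lemma~\ref{cond hilbert pol} would force $h^0(\PP^2,\I_{\B}(2))=N+1=4$, whereas the quadrics through a point of $\PP^2$ form a $5$-dimensional space. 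Note also that Remark~\ref{rem 1}, which you use to represent $\varphi^{-1}$ by degree-one forms without common factor, is formulated under the standing assumption $n\geq 3$ (needed there to make $S(\ZZ)$ a UFD), so deducing $n\geq3$ from that remark would risk circularity; the secant-variety or $h^0$ argument avoids this. With this boundary case repaired, your proof is complete.
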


We also point out that in the quadratic case, a number of results on special birational transformations 
have been obtained,
without imposing any condition on the dimension of the base locus.
Among these we have classifications for 
quadro-quadric Cremona transformations \cite{ein-shepherdbarron} (those whose base locus is a Severi variety \cite[Chapther~IV]{zak-tangent}),
quadro-cubic Cremona transformations \cite{russo-qel1},
quadro-quadric transformations into a smooth quadric \cite{note,alzati-sierra-quadratic},
quadro-quintic Cremona transformations \cite{note2,note3,russo-qel1},
quadro-cubic transformations into a factorial del Pezzo variety \cite{note2,note3},
quadro-linear transformations into a prime Fano manifold \cite{fu-hwang}.

\section{Special quartic and quintic birational transformations} \label{quintiche}
Taking into account
Corollary~\ref{casiProp},
Remark~\ref{analisi}, Corollary~\ref{cor main}, and Theorem~\ref{cor main quadratic},
 in order to complete the classification of the special birational 
 transformations $\varphi:\PP^n\dashrightarrow\ZZ\subseteq\PP^{n+a}$
 whose base locus $\B$ has dimension at most three and where $\ZZ$ is a
 factorial complete intersection, 
 we only need to classify the transformations as in the
 cases \eqref{nonquad2}, \eqref{caso3propA} and \eqref{caso3propANcre} of Corollary~\ref{casiProp}.
 We treat the case \eqref{nonquad2} in Proposition~\ref{propAS0}, the case \eqref{caso3propA} 
 in Proposition~\ref{propASquart}, and 
 the case \eqref{caso3propANcre} in Theorem~\ref{propAS}.
 We recall that these kinds of transformations  
 have been classified in \cite{alzati-sierra} under the hypothesis that $\ZZ$ is a smooth prime Fano variety.

\begin{proposition}\label{propAS0}
 Let $\varphi:\PP^4\dashrightarrow\ZZ\subseteq\PP^{4+a}$ be a special 
 birational transformation of type $(4,d)$, with $d>1$ and base locus $\B$ (necessarily of dimension two),
 into a non-degenerate factorial complete intersection $\ZZ\subseteq\PP^{4+a}$. Then one of the two following cases holds:
 \begin{itemize}
 \item $\varphi$ is a quarto-quartic Cremona transformation 
 and $\B$ is a determinantal surface of degree $\lambda=10$ and sectional genus $g=11$
 given by the vanishing of the $4\times 4$ minors of a $4\times 5$ matrix of linear forms;
 \item $\varphi$ is a quarto-quadric  transformation
 into a cubic hypersurface and $\B$ is a $K3$ surface
 with five $(-1)$-lines
 of degree 
   $\lambda=9$ and sectional genus $g=8$.
\end{itemize}
\end{proposition}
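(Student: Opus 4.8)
The plan is to specialize to surfaces the machine used to prove Theorem~\ref{thm main}. Proposition~\ref{PropDim} applied with $n=4$, $d_1=4$ and $\dim\B=r=2$ forces $r'=2$ and $d=d_2=4-c$; writing the complete intersection $\ZZ$ as an intersection of hypersurfaces of degrees $e_1,\dots,e_a$, so that $c=\sum_i e_i-a$ and $\Delta=\prod_i e_i$, the conditions $e_i\ge 2$ and $d>1$ leave exactly the four numerical types $(\Delta,d,a)\in\{(1,4,0),(2,3,1),(3,2,1),(4,2,2)\}$, the first being the Cremona case.

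Next I would establish the surface analogue of Lemma~\ref{numericalInvariants}. Since $\ZZ$ is a factorial complete intersection it is linearly normal, so Lemma~\ref{cond hilbert pol} gives $\chi(\O_\B(4))=\binom{8}{4}-(a+5)$ and $\chi(\O_\B(3))=\binom{7}{3}$. Combining these two identities with the Segre-class expression \eqref{segresDegs} of the multidegree, Hirzebruch--Riemann--Roch on $\B$, and the classical double-point formula $\lambda^2-10\lambda-5\,K_\B H_\B-2\,K_\B^2+12\,\chi(\O_\B)=0$ for a smooth surface in $\PP^4$, one solves a system of linear equations (plus one quadratic one) and finds: the multidegree is $(1,4,16-\lambda,d\Delta,\Delta)$; the relation $d\Delta=2a+4$, satisfied by all four types; $g=4\lambda+a-29$; and $\lambda^2-25\lambda+152=2(a+\Delta)$. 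This last equation has no integral root for $(\Delta,d,a)=(2,3,1)$ or $(4,2,2)$, so those two types are excluded; for $(\Delta,d,a)=(1,4,0)$ its roots are $\lambda\in\{10,15\}$, and for $(\Delta,d,a)=(3,2,1)$ they are $\lambda\in\{9,16\}$. The larger root is eliminated in each case by the positivity and log-concavity of the multidegree ($\delta_i>0$ and $\delta_i^2\ge\delta_{i-1}\delta_{i+1}$; see \cite[Subsection~1.4]{dolgachev-cremonas}), leaving $(\lambda,g)=(10,11)$ in the Cremona case and $(\lambda,g)=(9,8)$ in the other; in particular $\chi(\O_\B)=5$, $K_\B^2=5$ respectively $\chi(\O_\B)=2$, $K_\B^2=-5$, with $K_\B H_\B$ equal to $10$ and $5$ respectively.

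Finally I would identify the base locus in the two surviving cases. For $(\Delta,d,a)=(1,4,0)$, $\varphi$ is a quarto-quartic Cremona transformation of $\PP^4$ with a smooth connected two-dimensional base locus, so the classification of such transformations in \cite{crauder-katz-1989} applies and shows that $\B$ is the determinantal surface cut out by the $4\times4$ minors of a $4\times5$ matrix of linear forms. For $(\Delta,d,a)=(3,2,1)$ I would run the adjunction-theoretic classification of polarized surfaces (see \cite{beltrametti-sommese,sommese-adjunction-theoretic,sommese-adjunction-mapping}): from $\chi(\O_\B)=2$ one rules out, via Castelnuovo's rationality criterion, that $\B$ be birationally ruled, so $\kappa(\B)\ge 0$ and $\B$ is not minimal since $K_\B^2=-5<0$; examining $|K_\B+H_\B|$, which one checks is nef and big because $\chi(\O_\B)=2$ excludes all the Sommese exceptions, and passing to its minimal reduction $(\R,H_\R)$, the identities $K_\R H_\R=K_\B H_\B-\nu=5-\nu$ and $K_\R^2=K_\B^2+\nu=\nu-5$ force $\nu=5$ and $K_\R\equiv 0$, whence $\R$ is a $K3$ surface and $\B$ is its blow-up at five points with exceptional lines, and $\varphi$ is a quarto-quadric transformation into the cubic hypersurface $\ZZ\subset\PP^5$. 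I expect this last step to be the main obstacle: one must carefully exclude the Enriques, properly-elliptic and general-type possibilities for $\R$ and check that the requirement that $\B$ be cut out by quartics is compatible only with the $K3$ case. Existence in both cases is then exhibited explicitly --- by the minors of a generic matrix of linear forms in the first case, and by a suitable construction over a $K3$ surface, equivalently by the classical Fano--Morin realization of the rationality of a cubic fourfold, in the second.
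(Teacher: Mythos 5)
Your numerical reduction coincides with the paper's own proof: the case list $(\Delta,d,a)\in\{(1,4,0),(2,3,1),(3,2,1),(4,2,2)\}$ comes from Proposition~\ref{PropDim} (equivalently Corollary~\ref{casiProp}), the identities $g=4\lambda+a-29$ and $\chi(\O_{\B})=6\lambda+3a-55$ from Lemma~\ref{cond hilbert pol}, and your key relation $\lambda^2-25\lambda+152=2(a+\Delta)$ is exactly the paper's equation \eqref{progDeg0}, obtained in the same way from \eqref{segresDegs} together with the double-point formula for smooth surfaces in $\PP^4$; the exclusions of $(2,3,1)$ and $(4,2,2)$ and the roots $\lambda\in\{9,16\}$, $\lambda\in\{10,15\}$ agree with the paper. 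The only cosmetic differences here are that the paper disposes of the Cremona case $a=0$ at the outset by quoting \cite[Theorem~3.3]{crauder-katz-1989} (you rederive $\lambda\in\{10,15\}$, kill $15$ by log-concavity of the multidegree, and then cite the same theorem), and that the paper excludes $\lambda=16$ because a degree-$16$ surface cut out by quartics would be a complete intersection, whereas you use positivity of $\delta_2=16-\lambda$; both work.

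The genuine divergence is in identifying the degree-$9$ surface, and this is also where your argument, as written, has a gap. The paper simply invokes the Aure--Ranestad classification of smooth degree-$9$ surfaces in $\PP^4$ \cite{Aure}: the invariants $(\lambda,g,\chi(\O_{\B}),K_{\B}^2)=(9,8,2,-5)$ single out the K3 surface with five $(-1)$-lines. You instead try to prove the structure directly by adjunction theory, but the decisive step ``the identities $K_{\R}H_{\R}=5-\nu$ and $K_{\R}^2=\nu-5$ force $\nu=5$ and $K_{\R}\equiv0$'' does not follow from those identities alone: $\kappa(\B)\ge 0$ only gives $K_{\R}H_{\R}\ge0$, i.e. $\nu\le5$, and $K_{\R}^2\ge0$ is not available because the adjunction-theoretic reduction need not be a minimal model. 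A clean way to close it (bypassing the reduction): since $\chi(\O_{\B})=2$ excludes birationally ruled surfaces, pass to the minimal model $\sigma:\B\to\B_{\min}$ with $\nu'$ total exceptional classes $E_i$; then $K_{\B_{\min}}^2=K_{\B}^2+\nu'=\nu'-5\ge0$ forces $\nu'\ge5$, while $5=K_{\B}H_{\B}=K_{\B_{\min}}\cdot\sigma_{*}H_{\B}+\sum E_i\cdot H_{\B}\ge\nu'$ (using $K_{\B_{\min}}$ nef and $H_{\B}$ very ample) forces $\nu'\le5$; hence $\nu'=5$, $K_{\B_{\min}}^2=0$, $K_{\B_{\min}}\cdot\sigma_{*}H_{\B}=0$, and the Hodge index theorem gives $K_{\B_{\min}}\equiv0$, so that $\chi=2$ leaves only the K3 case; the forced equalities $E_i\cdot H_{\B}=1$ show the five points are distinct and the exceptional curves are embedded as lines. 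With this (or simply the citation of \cite{Aure}, as in the paper) your proof is complete; existence is classical, as you indicate (\cite{crauder-katz-1989,Fano}).
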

\begin{proof}
If $a=0$ we apply \cite[Theorem~3.3]{crauder-katz-1989}. So we can assume $a\geq 1$.
 Denoting by $\Delta$ the degree of $\ZZ$,
 by Corollary~\ref{casiProp}, case~\eqref{nonquad2},  we have to consider the following cases: 
 \begin{itemize}
 \item $d=3$, $a=1$, $\Delta=2$;
 \item $d=2$, $a=1$, $\Delta=3$;
 \item $d=2$, $a=2$, $\Delta=4$.
\end{itemize}
By \eqref{formuleHilbPol} we can express $\chi(\mathcal{O}_{\B})$ and the sectional genus $g$ 
as functions of $\lambda$ and $a$ as follows:
\begin{equation}\label{chiOS+g}
 \chi(\mathcal{O}_{\B}) = 6\,\lambda + 3\,a - 55 ,\quad g = 4\,\lambda + a - 29.
\end{equation}
Since $\B$ is a  surface embedded in $\PP^4$,
we can compute $K_{\B}^2$ and $c_2(\T_{\B})$ (see \cite[p.~434]{hartshorne-ag}):
\begin{equation}\label{KS2+c2T}
K_{\B}^2  = (1/2)\,{\lambda}^{2}+(27/2)\,{\lambda}+13\,a-180 ,\  
c_2(\T_{\B}) = -(1/2)\,{\lambda}^{2}+(117/2)\,{\lambda}+23\,a-480  .
\end{equation}
Using \eqref{KS2+c2T} one can easily compute that
\begin{equation}\label{segreNS}
 s_1(\N_{\B,\PP^4})\, H_{\B} =  -12\,{\lambda}-2\,a+60,\quad s_2(\N_{\B,\PP^4}) = -(1/2)\,{\lambda}^{2}+(217/2)\,{\lambda}+33\,a-780   .  
\end{equation}
From the formula \eqref{segresDegs} (with $(n,k)=(4,0)$) using \eqref{segreNS} we get
\begin{equation}\label{progDeg0}
 \Delta = (1/2)\,{\lambda}^{2}-(25/2)\,{\lambda} - a+76 .
\end{equation}
In the cases when $(a,\Delta)\in\{(1,2),(2,4)\}$, the equation \eqref{progDeg0} has no integral solutions,
while if $(a,\Delta) = (1,3)$ we get $\lambda=9$ or $\lambda=16$. 
Of course, $\lambda=16$ is impossible since $\B$ is cut out by quartics and is not a complete intersection,
therefore we have:
\begin{equation*}
 d = 2,\ a=1,\ \Delta=3,\ \lambda=9,\ g=8,\ K_{\B}^2=-5,\ \chi(\mathcal{O}_{\B}) = 2.
\end{equation*}
 Now the assertion follows from \cite{Aure}.
\end{proof}

 \begin{proposition}\label{propASquart}
 There are no 
 special 
 birational transformations $\PP^5\dashrightarrow\ZZ\subseteq\PP^{5+a}$ of type $(4,d)$, with $d>1$,
 into a
 linearly normal factorial variety $\ZZ\subseteq\PP^{5+a}$.
 
 Moreover, if  $\PP^5\dashrightarrow\ZZ\subseteq\PP^{5+a}$ is a special 
 birational transformation of type $(4,1)$ into a  factorial complete intersection $\ZZ$,
 then
 its base locus 
 is a threefold of 
  degree $9$ and sectional genus $9$, which is linked 
  to a cubic scroll in the complete intersection of a cubic and a quartic,
  and
 $\ZZ\subset\PP^8$ is a complete intersection of three quadrics.
\end{proposition}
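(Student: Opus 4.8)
The plan is to treat the two assertions with the same machinery --- the numerical analysis of Lemmas~\ref{numericalInvariants} and \ref{casifiniti} together with the adjunction-theoretic elimination of Theorem~\ref{thm main} --- only now with $n=5$ and $d_1=4$ in place of $n=6$ and $d_1=3$.

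\emph{First assertion ($d>1$).} I would start from Proposition~\ref{PropDim}. With $n=5$ and $d_1=4$, the relation $d_1=(r'+2)/(n-r-1)$ from \eqref{formulasDim2} gives $r'=14-4r$, and since $0\le r'\le n-2=3$ this forces $r=3$, $r'=2$; then $d=d_2=(r-c+2)/(n-r'-1)=(5-c)/2$, so $d>1$ is possible only for $c=1$, $d=2$. By Remark~\ref{rem 1} (Kobayashi--Ochiai), $c=1$ means $\ZZ\subset\PP^6$ is a quadric hypersurface, so $a=1$ and $\Delta=2$, and the task becomes to exclude a special transformation $\varphi:\PP^5\dashrightarrow\ZZ\subset\PP^6$ of type $(4,2)$ with smooth connected three-dimensional base locus $\B$. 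Since here $r'=2=n-3$, Lemma~\ref{cond hilbert pol} provides the three values $\chi(\O_{\B}(2))=21$, $\chi(\O_{\B}(3))=56$ and $\chi(\O_{\B}(4))=119$. Feeding these, through Hirzebruch--Riemann--Roch, into the tangent sequence $0\to\T_{\B}\to\T_{\PP^5}|_{\B}\to\N_{\B,\PP^5}\to0$, the identity $s(\N_{\B,\PP^5})=c(\N_{\B,\PP^5})^{-1}$, the double point formula \eqref{doublePointFormula} (valid for threefolds in $\PP^n$ with $n\le6$) and the multidegree formula \eqref{segresDegs} applied to the multidegree $(1,4,\delta_2,\delta_3,4,2)$ of $\varphi$, one obtains the exact analogue of Lemma~\ref{numericalInvariants}: every Chern number of $\B$ becomes a polynomial in the single parameter $\lambda=\deg\B$. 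I would then run the elimination of Theorem~\ref{thm main}: $\B$ is non-degenerate of codimension two and set-theoretically cut out by quartics, so $3\le\lambda\le16$, and $\delta_2=16-\lambda\ge1$ excludes $\lambda=16$; the remaining log-concavity inequalities $\delta_i^2\ge\delta_{i-1}\delta_{i+1}$ and the inequalities from adjunction theory \cite{livorni-sommese-15,beltrametti-biancofiore-sommese-loggeneral} leave only finitely many (I expect zero) values of $\lambda$; and any surviving value is ruled out by the base-point-freeness and nef-and-bigness analysis of $K_{\B}+2H_{\B}$ and of the reduction $(\R,H_{\R})$, carried out verbatim as in Theorem~\ref{thm main}. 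This proves that no such $\varphi$ exists.

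\emph{Second assertion ($d=1$).} The same computation with \eqref{formulasDim2} forces $r=3$, $r'=2$ and now $c=3$. Writing $\ZZ$ as a complete intersection of hypersurfaces of degrees $e_1,\ldots,e_a\ge2$, the relation $c=\sum e_i-a=3$ gives $\sum e_i=a+3$, whence $(\Delta,a)\in\{(4,1),(6,2),(8,3)\}$. Lemma~\ref{cond hilbert pol} with $(d_1,d_2)=(4,1)$ yields $h^0(\PP^5,\I_{\B}(3))=1$ --- so there is a unique cubic hypersurface $C$ containing $\B$ --- together with $h^0(\PP^5,\I_{\B}(4))=a+6$ and $\chi(\O_{\B}(2))=21$, $\chi(\O_{\B}(3))=55$, $\chi(\O_{\B}(4))=120-a$. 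Exactly as in the first assertion, these relations, \eqref{doublePointFormula} and \eqref{segresDegs} for the multidegree $(1,4,\delta_2,\delta_3,\Delta,\Delta)$ of $\varphi$ express every invariant of $\B$ as a function of $\lambda$ and $a$; the finite search then gives $\lambda=9$, $g=9$ and excludes $(\Delta,a)\in\{(4,1),(6,2)\}$, because for those two the resulting multidegree is incompatible with $r'=2$, precisely as in the last step of the proof of Theorem~\ref{thm cubo-linear}. It remains to identify $\B$: for a general quartic $F$ through $\B$, the complete intersection $W=C\cap F\subset\PP^5$ has degree $12$ and contains $\B$, so by liaison \cite[Proposition~4.7]{Hartshorne1994} the residual threefold $Y=\overline{W\setminus\B}$ has degree $12-9=3$, and computing its Hilbert polynomial from that of $\B$ shows $Y$ is a non-degenerate threefold of minimal degree, i.e.\ a rational normal cubic scroll. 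Hence $\B$ is linked to a cubic scroll inside the complete intersection of a cubic and a quartic, and, since the only surviving case is $(\Delta,a)=(8,3)$, the image $\ZZ\subset\PP^8$ is a complete intersection of three quadrics.

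\emph{Main obstacle.} The delicate point in both parts is the numerical elimination itself: once every invariant of $\B$ has been reduced to a function of $\lambda$ (and $a$), one must still verify that the combination of the log-concavity inequalities, the Livorni--Sommese and Beltrametti--Sommese bounds, and the reduction-morphism analysis of Theorem~\ref{thm main} genuinely excludes every remaining value. This is routine in spirit but error-prone, and, as with Theorem~\ref{thm main}, is best carried out with computer assistance.
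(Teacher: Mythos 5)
Your plan for this proposition is essentially ``rerun the $\PP^6$ machinery of Theorem~\ref{thm main} with $n=5$, $d_1=4$'', but that is not how the paper argues, and as written it has a genuine gap: you never use the one numerical relation that is special to $n=5$ and that actually closes the argument, namely the double point formula for the \emph{hyperplane-section surface in} $\PP^4$ (\cite[p.~434]{hartshorne-ag}). The paper's proof is short and uses no adjunction theory at all: since $r'=2$, the multidegree is $(1,4,16-\lambda,\Delta d^2,\Delta d,\Delta)$, and comparing $\delta_3=\Delta d^2$ with the value of $\delta_3$ computed from \eqref{segresDegs} gives $a=\tfrac12\Delta d^2+2\epsilon(d)-3$ (equation \eqref{quarto-codimIm}), while the $\PP^4$ double point formula applied to the hyperplane section gives $\lambda^2-2\Delta d-25\lambda-2a+16\epsilon(d)+150=0$ (equation \eqref{quarto-Hart}). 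For $(d,\Delta,a)=(2,2,1)$ the latter becomes $\lambda^2-25\lambda+140=0$, which has no integer roots --- that single line is the whole proof of the first assertion. Your toolkit contains only \eqref{doublePointFormula}, the threefold identity valid for any $\B\subset\PP^{\le 6}$, which carries no extra information about lying in $\PP^5$; with the constraints you do list (log-concavity with $\delta_3=8$, $g=4\lambda-27\ge 0$, Castelnuovo) the values $\lambda\in\{7,8,9,10\}$ all survive, and ``I expect zero values'' / ``any survivor is ruled out verbatim by the reduction analysis'' is a hope, not an argument: the adjunction inequalities of Theorem~\ref{thm main} are built from the same invariants and there is no reason they should detect the failure of the $\PP^4$ double point identity. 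The same gap infects the second part: the assertion ``the finite search then gives $\lambda=9$, $g=9$'' is precisely what has to be proved, and in the paper it is again \eqref{quarto-Hart} that pins $\lambda$ down (to $\{9,16\}$ for $(\Delta,a)=(8,3)$, $\{10,15\}$ for $(6,2)$, $\{12,13\}$ for $(4,1)$) before the case-by-case exclusion can even start.

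Two secondary points. In part 1 you leave $\delta_3$ as an unknown in the multidegree $(1,4,\delta_2,\delta_3,4,2)$, whereas $r'=2$ forces $\delta_3=\Delta d^2$; this identity is needed for \eqref{quarto-codimIm} and should be stated. In part 2, your blanket claim that the cases $(\Delta,a)\in\{(4,1),(6,2)\}$ die because ``the multidegree is incompatible with $r'=2$'' is only half right: of the four tuples that arise, two are excluded that way (multidegrees $(1,4,6,6,6,6)$ and $(1,4,4,4,4,4)$), but the other two are excluded because their multidegrees violate the Hodge (log-concavity) inequalities. On the positive side, your liaison identification of the residual of $\B$ in the complete intersection of the unique cubic and a general quartic as a cubic scroll, starting from $h^0(\I_{\B}(3))=1$, is a reasonable alternative to the paper's appeal to the classification of degree-nine threefolds \cite{fania-livorni-nine} --- but it only becomes available after the numerical case $(\lambda,g,\Delta,a)=(9,9,8,3)$ has been isolated, which is exactly the step your proposal does not supply.
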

 \begin{proof}
Let $\varphi:\PP^5\dashrightarrow\ZZ\subseteq\PP^{5+a}$ be a special birational transformation of type $(4,d)$
into a non-degenerate linearly normal factorial variety $\ZZ$ of degree $\Delta$.
From Proposition~\ref{PropDim}, we obtain that either  $d=2$ and $c(\ZZ) = 1$ (so that $\ZZ$ is a quadric hypersurface),
 or $d=1$ and $c(\ZZ) = 3$; moreover, in both cases the base locus $\B$ of $\varphi$
 is a threefold of a certain degree $\lambda$,
  and 
 the base locus of $\varphi^{-1}$ has dimension two.
 It follows that the multidegree of $\varphi$ is given by 
 \begin{equation}\label{multideg1}
 (1,\ 
 4,\ 
 16-\lambda,\ 
 \Delta\,d^2,\ 
 \Delta\,d,\ 
 \Delta) .
 \end{equation}
 Now, by Lemma~\ref{cond hilbert pol} we can express 
the Hilbert polynomial of $\B$ as a function of $\lambda$ and $a$.
In particular, the sectional genus of $\B$ is given by
$g = 4\,{\lambda}-2\,\epsilon(d) + a - 28$, where $\epsilon(1)=1$ and $\epsilon(2)=0$.
Then, just as in the proof of Lemma~\ref{numericalInvariants}, we can compute that
\begin{align*}
s_1(\N_{\B,\PP^5})\,H_{\B}^2 &= -12\,{\lambda}-2\,{a}+4\,\epsilon(d)+58, \\
s_2(\N_{\B,\PP^5})\,H_{\B} &= -{\Delta}\,{d}+96\,{\lambda}+32\,{a}-64\,\epsilon(d)-672, \\
s_3(\N_{\B,\PP^5}) &= 20\,{\Delta}\,{d}-640\,{\lambda}-{\Delta}-320\,{a}+640\,\epsilon(d)+5184,
\end{align*}
and therefore by \eqref{segresDegs} we deduce that the multidegree of $\varphi$ must be
\begin{equation}\label{multideg2} 
(1,\ 
4,\ 
16 -{\lambda},\ 
 2\,{a}-4\,\epsilon(d)+6,\ 
 {\Delta}\,{d},\ 
 {\Delta}) .
\end{equation}
By comparing \eqref{multideg1} and \eqref{multideg2}
we get:
\begin{equation}\label{quarto-codimIm}
  a = (1/2)\,{\Delta}\,{d}^2+2\,{\epsilon}(d)-3 .
\end{equation}
On the other hand, from the fact that the general hyperplane section of $\B$ is a surface embedded in $\PP^4$,
we get the following relation (see \cite[p.~434]{hartshorne-ag}):
\begin{equation}\label{quarto-Hart}
{\lambda}^2-2\,{\Delta}\,{d}-25\,{\lambda}-2\,{a}+16\,{\epsilon}(d)+150  = 0   .
\end{equation}
Thus, we conclude that $\Delta=d=2$ is impossible. Therefore it must be that
\begin{equation}\label{cond1quarto}
 d = 1,\quad {\Delta} = (1/3)\,{\lambda}^2-(25/3)\,{\lambda}+56, \quad a = (1/6)\,{\lambda}^2-(25/6)\,{\lambda}+27 ,
\end{equation}
and the first assertion follows.   

Now,  since we have $\lambda < 16$ and $g\geq 0$, we deduce that the invariants of $\varphi$ 
are as in one of the following cases ($m$ stands for multidegree):
\begin{enumerate}[(i)]
 \item\label{CasA1} $(\lambda,g,\Delta,a)= (6, 2, 18, 8)$, $m=(1, 4, 10, 18, 18, 18)$;
\item\label{CasA2} $(\lambda,g,\Delta,a)= (7, 4, 14, 6)$, $m=(1, 4, 9, 14, 14, 14)$;
\item\label{CasA3} $(\lambda,g,\Delta,a)= (9, 9, 8, 3)$, $m=(1, 4, 7, 8, 8, 8)$; 
\item\label{CasA4} $(\lambda,g,\Delta,a)= (10, 12, 6, 2)$, $m=(1, 4, 6, 6, 6, 6)$;
\item\label{CasA5} $(\lambda,g,\Delta,a)= (12, 19, 4, 1)$, $m=(1, 4, 4, 4, 4, 4)$;
\item\label{CasA6} $(\lambda,g,\Delta,a)= (13, 23, 4, 1)$, $m=(1, 4, 3, 4, 4, 4)$;
\item\label{CasA7} $(\lambda,g,\Delta,a)= (15, 32, 6, 2)$, $m=(1, 4, 1, 6, 6, 6)$.
\end{enumerate}
In case \eqref{CasA3} we can apply \cite{fania-livorni-nine}, and 
one easily verifies that this case occurs with $\ZZ$ smooth.
In cases \eqref{CasA1} and \eqref{CasA2},  $\ZZ$ is not a complete intersection.
Cases \eqref{CasA4}  and \eqref{CasA5} 
are in contradiction with the fact that the base locus of the inverse map has dimension (at least) two.
Finally, cases \eqref{CasA6} and \eqref{CasA7} are excluded since the multidegree does not satisfy the Hodge inequalities.
 \end{proof}

  The following proposition is established in a similar way as we showed Propositions~\ref{propAS0} and \ref{propASquart};
  see also \cite[Proposition~10]{alzati-sierra}.
 \begin{proposition}\label{prop quarto-linear}
 If $\PP^4\dashrightarrow\ZZ\subseteq\PP^{4+a}$ is a special 
 birational transformation of type $(4,1)$ into a  factorial complete intersection $\ZZ$,
 then
 its base locus 
 is a surface of 
  degree $9$ and sectional genus $9$, which is linked 
  to a cubic scroll in the complete intersection of a cubic and a quartic,
  and
 $\ZZ\subset\PP^7$ is a complete intersection of three quadrics.
 \end{proposition}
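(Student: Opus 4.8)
The plan is to argue exactly as in the proofs of Propositions~\ref{propAS0} and \ref{propASquart}. First I would apply Proposition~\ref{PropDim} with $(d_1,d_2)=(4,1)$. Since the base locus $\B$ of a special birational transformation is smooth, connected and of codimension $\geq 2$ in $\PP^4$, formulas \eqref{formulasDim} and \eqref{formulasDim2} force $\B$ to be a surface, the base locus $\B'$ of $\varphi^{-1}$ to have dimension $r'=2$, and the invariant $c=c(\ZZ)$ to equal $3$. If $\ZZ$ is a complete intersection of hypersurfaces of degrees $e_1,\dots,e_a\geq 2$, then $\sum_i e_i-a=c=3$, which leaves exactly the three possibilities $(\Delta,a)\in\{(4,1),(6,2),(8,3)\}$ for $\Delta=\prod_i e_i$.

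Next, writing $\lambda$ for the degree of $\B$, I would use Lemma~\ref{cond hilbert pol} (here $\lceil(d_2-1)/d_2\rceil=0$ and $N=4+a$) to pin down the Hilbert polynomial of the surface $\B$; this gives its sectional genus $g=4\lambda+a-30$ and $\chi(\O_{\B})=6\lambda+3a-59$. Then, exactly as in the proof of Lemma~\ref{numericalInvariants}, Noether's formula and the double point formula for a smooth surface of $\PP^4$ (\cite[p.~434]{hartshorne-ag}) express $K_{\B}^2$ and $c_2(\T_{\B})$, hence the Segre classes of $\N_{\B,\PP^4}$, as functions of $\lambda$ and $a$; plugging these into \eqref{segresDegs} yields the multidegree of $\varphi$, which turns out to be $(1,\,4,\,16-\lambda,\,\Delta,\,\Delta)$. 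Comparing the computed value of $\delta_3$ with $\delta_3=d_2\,\Delta$ gives $\Delta=2a+2$, while the computed value of $\delta_4$ gives $\Delta=\frac{1}{2}\lambda^2-\frac{25}{2}\lambda-a+83$; equating these and substituting each admissible value of $a$ produces, for each $a$, a quadratic equation in $\lambda$. Altogether the solutions are $(\lambda,g,\Delta,a)\in\{(9,9,8,3),\,(10,12,6,2),\,(12,19,4,1)\}$ together with the larger roots $\lambda=16,15,13$ of the three quadratics.

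The heart of the argument is the exclusion of all but the first case. Since $d_2=1$, Remark~\ref{rem secants hypers} gives $h^0(\PP^4,\I_{\B}(3))=1$, so $\B$ lies on a unique cubic hypersurface $Y\subset\PP^4$; since $a\geq 1$, there is a quartic $Q$ through $\B$ not containing $Y$, and hence $\B$ is directly linked, inside the complete intersection $Y\cap Q$ of a cubic and a quartic, to a surface of non-negative degree $12-\lambda$. This forces $\lambda\leq 12$, which disposes of $\lambda\in\{13,15,16\}$ (alternatively, the corresponding multidegrees violate the inequalities $\delta_2^2\geq\delta_1\delta_3$). For $(\lambda,\Delta)=(10,6)$ and $(12,4)$ one has $16-\lambda=\Delta$, so the multidegree is $(1,4,\Delta,\Delta,\Delta)$; as in the proof of Proposition~\ref{propASquart}, the equality $\delta_2=\Delta$ would force $\dim\B'\leq 1$, contradicting $r'=2$. (For $\lambda=12$ one may argue even more directly, as in the proof of Theorem~\ref{thm cubo-linear}: $\B$ would be a complete intersection of type $(3,4)$, so $\ZZ\subset\PP^5$ would be a quartic threefold with a triple point $p$, the map $\varphi^{-1}$ would be the linear projection from $p$, and $\B'=\{p\}\subseteq\mathrm{Sing}(\ZZ)$ would contradict the requirement that $\ZZ$ be not too singular.) Hence $(\lambda,g,\Delta,a)=(9,9,8,3)$, and $\ZZ\subset\PP^{7}$ is a complete intersection of three quadrics.

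It remains to describe $\B$ and to establish existence. By the computation above, the surface directly linked to $\B$ in $Y\cap Q$ has degree $12-9=3$, and its general hyperplane section is a curve in $\PP^3$ linked to a hyperplane section of $\B$ in a complete intersection of type $(3,4)$, so the liaison genus formula shows that it has sectional genus $0$; a non-degenerate surface of degree $3$ and sectional genus $0$ in $\PP^4$ is a cubic scroll. Conversely, taking a cubic scroll $\Sigma\subset\PP^4$ together with a general cubic and a general quartic through $\Sigma$, the residual surface $\B=\overline{(Y\cap Q)\setminus\Sigma}$ is a smooth connected surface of degree $9$ and sectional genus $9$ with $h^0(\PP^4,\I_{\B}(3))=1$ and $h^0(\PP^4,\I_{\B}(4))=8$, and the linear system of quartics through $\B$ defines the desired transformation onto a (smooth) complete intersection of three quadrics in $\PP^7$; compare \cite[Proposition~10]{alzati-sierra} for the case $\ZZ$ smooth and \cite{fania-livorni-nine} for the underlying surface of $\PP^4$. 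I expect the only delicate step to be checking that this residual surface is smooth and irreducible for a general choice of the cubic and the quartic and that the quartics through it genuinely define a birational map onto a factorial complete intersection; this is handled precisely as in the situations treated in Propositions~\ref{propAS0} and \ref{propASquart}.
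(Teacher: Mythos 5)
Your proposal is correct and takes essentially the same route the paper intends: the paper's own ``proof'' is just the remark that the statement is established as in Propositions~\ref{propAS0} and \ref{propASquart} (with a pointer to \cite[Proposition~10]{alzati-sierra}), and your execution of that plan checks out numerically ($g=4\lambda+a-30$, $\chi(\O_{\B})=6\lambda+3a-59$, multidegree $(1,4,16-\lambda,\Delta,\Delta)$ with $\Delta=2a+2$ and $\Delta=\tfrac12\lambda^2-\tfrac{25}{2}\lambda-a+83$, and the exclusion of the spurious solutions via the Hodge inequalities and the condition $r'=2$, exactly as in the parallel proofs). The only looseness is in the final identification of the residual surface as a cubic scroll and in the existence construction, which you flag and which the paper itself settles only by citation, so this matches the paper's level of detail.
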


 We also omit the proof of the following result because 
 the methods we used in Section~\ref{cubiche} can be applied  here more efficiently, since the
 numerical invariants of 
 a threefold in $\PP^5$ satisfy more conditions of those
 of a threefold in $\PP^6$. Alternatively, one can obtain a proof 
 from the results of Alzati and Sierra in \cite{alzati-sierra}. 
 Indeed, even if they require that $\ZZ$ is a smooth prime Fano variety, 
 some of their arguments
 also apply to our situation with obvious changes.
\begin{theorem}[\cite{ein-shepherdbarron,alzati-sierra}]\label{propAS}
 Let $\varphi:\PP^5\dashrightarrow\ZZ\subseteq\PP^{5+a}$ be a special 
 birational transformation of type $(5,d)$ 
 into a non-degenerate factorial complete intersection $\ZZ\subseteq\PP^{5+a}$. Then its base 
   locus  $\B$ has dimension three, and
  one of the three following cases holds:
 \begin{itemize}
 \item $\varphi$ is a quinto-quintic Cremona transformation,
 $\B$ has degree $15$ and sectional genus $26$ and is 
 given by the vanishing of the $5\times 5$ minors of a $5\times 6$ matrix of linear forms;
\item  $\varphi$ is a quinto-cubic  transformation into a cubic hypersurface and 
  $\B$ has degree  $14$ and sectional genus $22$;
\item $\varphi$ is a quinto-linear transformation 
into a complete intersection of four quadrics and $\B$ has
 degree $14$ and sectional genus $23$.
  \end{itemize}
\end{theorem}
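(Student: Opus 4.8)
The plan is to transport, to the more rigid setting of $\PP^5$, the strategy of the proof of Theorem~\ref{thm main}. First I would use Proposition~\ref{PropDim} to fix the numerics: for a transformation of type $(5,d)$ into a factorial variety whose base locus $\B$ is three-dimensional one gets $n=5$ and $r'=3$, together with $d=5-c$; since a complete intersection $\ZZ$ of hypersurfaces of degrees $e_1,\ldots,e_a\ge 2$ has $c=\sum_{i=1}^a e_i-a\ge a$, this forces $a\le 5-d$, leaving only finitely many data $(e_1,\ldots,e_a)$, hence finitely many triples $(\Delta,d,a)$. As in the proof of Proposition~\ref{propASquart}, one checks separately that $\dim\B=3$ is automatic when $d>1$, and for $d=1$ one keeps track of the multidegree directly.

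Next I would set up the $\PP^5$-analogue of Lemma~\ref{numericalInvariants}: letting $\lambda=\deg\B$ and $g$ be the sectional genus, one expresses the multidegree of $\varphi$, the intersection numbers $K_\B H_\B^2$, $K_\B^2 H_\B$, $K_\B^3$, the Chern numbers of $\T_\B$, and the invariants $K_S^2$, $\chi(\O_S)$, $c_2(\T_S)$ of a smooth hyperplane section $S\subset\PP^4$ of $\B$, all as low-degree polynomial functions of $(\lambda,g,\Delta,d,a)$. The inputs are exactly those of Lemma~\ref{numericalInvariants} --- Hirzebruch--Riemann--Roch, the normal bundle sequence with $s(\N)=c(\N)^{-1}$, the Segre-class formula \eqref{segresDegs}, the double-point formula, and the Hilbert-polynomial constraints \eqref{formuleHilbPol} and \eqref{formuleHilbPol2} --- but now one also has the relation for a surface in $\PP^4$ already used in \eqref{KS2+c2T} and \eqref{quarto-Hart}, which makes the system overdetermined. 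Bounding $\lambda$ (a non-degenerate threefold in $\PP^5$ cut out by quintics and not a complete intersection), $g$ (Castelnuovo), the multidegree (Hodge--Teissier inequalities), and the threefold invariants (Livorni--Sommese), one reduces to a finite explicit list of admissible tuples, the analogue of Lemma~\ref{casifiniti}.

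The core is then the adjunction-theoretic sieve applied to $(\B,H_\B)$, carried out exactly as in Section~\ref{cubiche}: base-point-freeness of $|K_\B+2H_\B|$ (removing scrolls over curves), nefness and bigness of $K_\B+2H_\B$ (removing del Pezzo threefolds, quadric fibrations over a curve, scrolls over a surface), existence of the first reduction $(\R,H_\R)$ together with a bound on the number of blown-up points coming from Le Barz's $4$-secant formula for the surface section, nefness and bigness of $K_\R+H_\R$, and finally Sommese's list of the remaining structures ($\PP^3$, $Q^3$, Veronese/del Pezzo/conic fibrations, Mukai varieties). Each alternative becomes, via the analogue of Lemma~\ref{numericalInvariants}, a system of polynomial equations and inequalities in $(\lambda,g,\Delta,d,a)$ (and the number of exceptional divisors), whose integral solutions in the finite list are either none or precisely the three asserted cases: the determinantal quinto-quintic Cremona transformation and the quinto-cubic transformation into a cubic hypersurface (both already in the literature, cf.~\cite{ein-shepherdbarron,alzati-sierra}), and the quinto-linear transformation into a complete intersection of four quadrics, whose base locus is obtained by liaison --- as in Example~\ref{example 2.9} or by restricting one of the $\PP^6$-constructions of Remark~\ref{rem examples} --- and for which one verifies that $\mathrm{Sing}(\ZZ)$ is strictly contained in the base locus of $\varphi^{-1}$. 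For $d=1$ the spurious tuples that survive the sieve are discarded exactly as at the end of the proof of Theorem~\ref{thm cubo-linear}: the multidegree forces a value of $\dim$ of the base locus of $\varphi^{-1}$ other than $3$.

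The main difficulty is not conceptual but bookkeeping: one must run the five-variable elimination through every branch of the sieve and confirm that no extra minimal threefold of log-general type slips through. This is cleaner in $\PP^5$ than in $\PP^6$ precisely because of the extra double-point relation in $\PP^4$, which is why the details can be safely omitted (and can alternatively be extracted from the arguments of Alzati and Sierra in \cite{alzati-sierra}).
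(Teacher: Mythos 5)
Your plan follows essentially the route the paper itself takes: the paper omits a detailed proof of Theorem~\ref{propAS}, stating precisely that the machinery of Section~\ref{cubiche} (Proposition~\ref{PropDim}, the analogue of Lemma~\ref{numericalInvariants}, the finiteness argument, and the adjunction-theoretic sieve) applies even more efficiently in $\PP^5$ because of the extra numerical relation enjoyed by the surface section in $\PP^4$, with the known cases covered by \cite{ein-shepherdbarron,alzati-sierra} and the quinto-linear example produced by liaison as in Remark~\ref{rem quintic abs struct}. The only adjustment worth noting is that for $S\subset\PP^4$ the relevant Le Barz count is that of $6$-secant lines (which must vanish since $S$ is cut out by quintics), rather than the $4$-secant formula for surfaces in $\PP^5$.
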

\begin{remark}\label{rem quintic abs struct}
In  the second case of Theorem~\ref{propAS}, $\B$ is linked inside 
the complete intersection of two quintics to a threefold of degree 
$11$ and sectional genus $13$, from which it follows that 
the ideal sheaf $\mathcal{I}_{\B}$ 
is given by a resolution
$
0\to \mathcal{T}_{\PP^5}(-7)\oplus \mathcal{O}_{\PP^5}(-6)\to\mathcal{O}_{\PP^5}(-5)^{\oplus 7}\to \mathcal{I}_{\B}\to 0 
$; see 
\cite{beltrametti_schneider_sommese_1992} and \cite[Lemma~7]{alzati-sierra}. 
Moreover the cubic hypersurface 
 $\ZZ\subset\PP^6$ turns out to be singular.
 In Example~\ref{exampleAS} we shall give an explicit example where $\ZZ$ is factorial, 
 being singular only at a finite number of points. 
 
 In the third case of Theorem~\ref{propAS},
 an explicit example can be constructed by using that 
$\B$ is linked inside the complete intersection of two quintics to a threefold 
of degree $11$ and sectional genus $14$, which is linked inside the 
complete intersection 
of two quartics to a threefold 
of degree $5$ and sectional genus $2$ as in Example~\ref{example 2.9}.
 Even in this case one sees that $\ZZ$ is  singular  only at a finite number of points.
 Moreover, 
the ideal sheaf $\mathcal{I}_{\B}$ 
is given by a resolution
$0\to \mathcal{O}_{\PP^5}(-6)^{\oplus 4} \to \mathcal{O}_{\PP^5}(-4)\oplus \mathcal{O}_{\PP^5}(-5)^{\oplus 4}\to \mathcal{I}_{\B}\to 0 
$; see 
\cite{beltrametti_schneider_sommese_1992} and \cite[Lemma~8]{alzati-sierra}.
\end{remark}

\begin{example}\label{exampleAS}
 Let $\varphi:\PP^6\dashrightarrow\PP^6$ be a special cubo-quintic Cremona transformation 
 as in Example~\ref{example Cremona 14-15}.
 Let $\ZZ\subset\PP^6$ 
 be a generic cubic hypersurface containing the base locus of $\varphi$.
 One sees that the singular locus of $\ZZ$ is zero-dimensional, and in particular $\ZZ$ is factorial.
 Now, the base locus of the inverse map 
 is 
 an irreducible fourfold of degree $14$, sectional genus $22$, and with zero-dimensional singular locus.
 Hence 
 the restriction of $\varphi$ to $\ZZ$ gives 
 a cubo-quintic birational transformation $\varphi|_{\ZZ}:\ZZ\dashrightarrow \PP^5\subset\PP^6$
 whose inverse map  is special and with three-dimensional base locus.
 Thus we have examples of transformations as in 
 line IV of Table~\ref{table: SCTintComplQuint}.
\end{example}

Some of the  relevant results of this section are summarized in Table~\ref{table: SCTintComplQuint}.
 \begin{table}[htbp]
\centering
\tabcolsep=7.6pt 
\begin{tabular}{llllllcllll}  
\hline
& $r$ & $n$ & $a$ & $\lambda$ &  $g$  &  Abstract structure of $\B$ & $d_1$ & $d$ & $\Delta$  & Existence   \\  
\hline
\rowcolor{Gray}
I &$2$ & $4$ & $0$ & $10$ & $11$ & Determinantal surface & $4$ & $4$ & $1$ &  \cite{crauder-katz-1989} \\
II &$2$ & $4$ & $1$ & $9$ & $8$ & K3 surface with $5$ $(-1)$-lines & $4$ & $2$ & $3$ &  \cite{Fano} \\
\rowcolor{Gray}
III &$3$ & $5$ & $0$ & $15$ & $26$ & Determinantal threefold & $5$ & $5$ & $1$ &  \cite{ein-shepherdbarron} \\
IV & $3$ & $5$ & $1$ & $14$ & $22$ & See Remark~\ref{rem quintic abs struct} & $5$ & $3$ & $3$ &  Example~\ref{exampleAS}
\end{tabular}
 \caption{All the types of special birational transformations $\PP^n\dashrightarrow\ZZ\subseteq\PP^{n+a}$ 
 of type $(d_1,d)$, with $d_1\in\{4,5\}$, $d>1$, and base locus $\B$ of dimension $r\leq 3$, degree $\lambda$, sectional genus $g$,
 into a factorial c. i. $\ZZ\subseteq\PP^{n+a}$ of degree $\Delta$. } 
\label{table: SCTintComplQuint} 
\end{table}
\begin{remark}\label{rem quintic}
 Let $\varphi:\PP^n\dashrightarrow\ZZ\subseteq\PP^{n+a}$ be
  a special birational transformation of type $(d_1,d)$, with $d_1\in\{4,5\}$, $d>1$, 
  and base locus of dimension three,
  into a prime Fano manifold $\ZZ$.
  If $\ZZ$ is not a complete intersection, then
   from  \cite[Theorem~8]{alzati-sierra} we deduce 
   that 
   $\varphi$ is a quinto-quadric transformation 
 into a linear section $\ZZ\subset\PP^{11}$
 of $\mathbb{G}(1,5)\subset\PP^{14}$ 
 and such that the 
 base locus $\B$ has degree $12$ and sectional genus $16$;
 moreover, the
 ideal sheaf $\mathcal{I}_{\B}$ is given 
 by a resolution 
 $
 0\rightarrow\O_{\PP^5}(-5)^{\oplus 3}\oplus\O_{\PP^5}(-6)\rightarrow \Omega_{\PP^5}(-3)\rightarrow \I_{\B}\rightarrow 0  
 $.
\end{remark}
 
\section{Special birational transformations whose inverse is linear}\label{sec linear}
We summarize some of our results in the following:
 \begin{theorem}
  Table~\ref{table: SCTlinear} classifies all
the special birational transformations of type $(d,1)$ 
into a 
factorial complete intersection 
and whose base locus has dimension at most three. 
 \end{theorem}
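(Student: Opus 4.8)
The plan is to derive the statement entirely by assembling the results already proved, since ``type $(d,1)$'' is precisely the condition $d_2=1$, and the transformations with linear inverse into a factorial complete intersection have been treated case by case in Propositions~\ref{prop quadro-linear}, \ref{prop quarto-linear} and \ref{propASquart} and in Theorems~\ref{thm cubo-linear} and \ref{propAS}. So first I would bound the degree $d$ of $\varphi$. The value $d=1$ is discarded at once: a linear $\varphi$ is an isomorphism onto a linearly embedded $\PP^n$, so $\ZZ=\PP^n$, which is not a complete intersection in our sense. For $d\geq2$, write $\ZZ$ as a complete intersection of type $(e_1,\ldots,e_a)$ and set $c=\Sigma_{i=1}^a e_i-a$, so that $c\geq a\geq1$. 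Specializing the formulas~\eqref{formulasDim} and \eqref{formulasDim2} of Proposition~\ref{PropDim} to $d_2=1$ gives $c=(d-1)(n-r-1)$ and $r'=n-r+c-3$, where $r=\dim\B\leq3$ and $r'=\dim\B'$. Since $\codim_{\ZZ}(\B')\geq2$ forces $r'\leq n-2$, one obtains $c\leq r+1\leq4$, hence $(d-1)(n-r-1)\leq4$; as $n-r-1\geq1$ this already yields $d\leq5$, and moreover shows that $n-r-1\geq2$ can occur only for $d\leq3$.

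Next I would go through the four remaining values of $d$ and invoke the appropriate classification. For $d=2$, Proposition~\ref{prop quadro-linear} gives $n\in\{3,4,5\}$, with $\B\subset\PP^{n-1}\subset\PP^n$ a quadric of codimension $2$ and $\ZZ\subset\PP^{n+1}$ a quadric hypersurface: three types. For $d=3$, Theorem~\ref{thm cubo-linear} gives $n\in\{3,4,5\}$, with $\B$ of codimension $2$, degree $5$ and sectional genus $2$, and $\ZZ\subset\PP^{n+2}$ a complete intersection of two quadrics: three further types. For $d=4$, the relation $3(n-r-1)=c\leq r+1$ forces $r\geq2$ and $n-r-1=1$, hence $(n,r)\in\{(4,2),(5,3)\}$, and Propositions~\ref{prop quarto-linear} and \ref{propASquart} describe $\B$ as a degree $9$, sectional genus $9$ variety linked to a cubic scroll in a complete intersection of a cubic and a quartic, with $\ZZ\subset\PP^{n+3}$ a complete intersection of three quadrics: two further types. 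For $d=5$, the relation $4(n-r-1)=c\leq r+1$ forces $r=3$ and $n=5$, and the quinto-linear case of Theorem~\ref{propAS} gives a threefold of degree $14$ and sectional genus $23$, with ideal sheaf as in Remark~\ref{rem quintic abs struct}, and $\ZZ\subset\PP^9$ a complete intersection of four quadrics: the last type. This totals $3+3+2+1=9$ types, matching Table~\ref{table: SCTlinear}; I would then close by recording that every type is realized, the cubo-linear ones by the residual-intersection construction of Example~\ref{example 2.9}, the quinto-linear one by the chain of liaisons described in Remark~\ref{rem quintic abs struct} (which itself starts from Example~\ref{example 2.9}), and the others by the references cited in the quoted propositions.

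The main thing requiring care is not any single computation but the bookkeeping in the enumeration: I must make sure that the ``off-diagonal'' numerical cases allowed by the crude inequality $(d-1)(n-r-1)\leq4$ but falling outside the list above --- for instance $d=2$ with a one-dimensional base locus in some $\PP^{n}$ with $n\geq4$, or $d=3$ with a three-dimensional base locus in $\PP^6$ --- are genuinely ruled out by the previously established results, rather than simply missed. Concretely, these are excluded respectively by Proposition~\ref{prop quadro-linear} (which pins $n$ down to $\{3,4,5\}$ and $\B$ to a codimension-two quadric) and by the case analysis inside the proof of Theorem~\ref{thm cubo-linear} (where the possibility $n=6$, $r=3$, $r'=4$ is shown not to occur). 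Cross-checking these exclusions against the cited statements, and confirming that the numerical data $(n,a,\lambda,g,d,\Delta)$ of each surviving case coincides with the corresponding row of Table~\ref{table: SCTlinear}, is where I would concentrate the verification.
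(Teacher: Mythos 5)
Your proposal is correct and follows essentially the same route as the paper: the paper's proof is precisely "apply Proposition~\ref{PropDim} and then Theorem~\ref{thm cubo-linear}, Proposition~\ref{prop quadro-linear}, Proposition~\ref{propASquart}, Proposition~\ref{prop quarto-linear}, and Theorem~\ref{propAS}", and your use of the relations $c=(d-1)(n-r-1)$, $r'=n-r+c-3$ together with $r'\leq n-2$ is exactly the bookkeeping implicit in invoking Proposition~\ref{PropDim} to bound $d\leq 5$ and to dispatch the remaining cases to those five results. The only difference is that you spell out this numerology and the existence references explicitly, which the paper leaves to the cited statements.
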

\begin{proof}
 The proof follows immediately by applying Proposition~\ref{PropDim} and then
  Theorem~\ref{thm cubo-linear},
  Proposition~\ref{prop quadro-linear},
  Proposition~\ref{propASquart},
  Proposition~\ref{prop quarto-linear},
 and Theorem~\ref{propAS}.
\end{proof}
 \begin{table}[htbp]
\centering
\tabcolsep=10.6pt 
\begin{tabular}{llllllllc}  
\hline
& $r$ & $n$ & $d$ & $\lambda$ &  $g$  & $a$ & $\Delta$ & Abstract structure of $\B$     \\  
\hline
\rowcolor{Gray}
I &$1$ & $3$ & $2$ & $2$ & $0$ & $1$ & $2$ & Conic \\
II &$1$ & $3$ & $3$ & $5$ & $2$ & $2$ & $4$ & Curve of genus $2$ \\
\rowcolor{Gray}
III &$2$ & $4$ & $2$ & $2$ & $0$ & $1$ & $2$ & Two-dimensional quadric \\
IV &$2$ & $4$ & $3$ & $5$ & $2$ & $2$ & $4$ & \begin{tabular}{c} Blow up of $\PP^2$ at $7$ simple \\ points  and one double point \end{tabular} \\
\rowcolor{Gray}
V & $2$ & $4$ & $4$ & $9$ & $9$ & $3$ & $8$ & \begin{tabular}{c} Surface linked to a cubic \\ scroll  in the c. i. of type $(3,4)$ \end{tabular} \\
VI &$3$ & $5$ & $2$ & $2$ & $0$ & $1$ & $2$ & Three-dimensional quadric \\
\rowcolor{Gray}
VII & $3$ & $5$ & $3$ & $5$ & $2$ & $2$ & $4$ & Quadric fibration over $\PP^1$ \\
VIII & $3$ & $5$ & $4$ & $9$ & $9$ & $3$ & $8$ & \begin{tabular}{c} Threefold linked to a cubic \\ scroll  in the c. i. of type $(3,4)$ \end{tabular} \\
\rowcolor{Gray}
IX & $3$ & $5$ & $5$ & $14$ & $23$ & $4$ & $16$ & See Remark~\ref{rem quintic abs struct}
\end{tabular}
 \caption{All the types of special birational transformations $\PP^n\dashrightarrow\ZZ\subset\PP^{n+a}$ 
 of type $(d,1)$ and base locus $\B$ of dimension $r\leq 3$, degree $\lambda$, sectional genus $g$,
 into a factorial complete intersection $\ZZ\subset\PP^{n+a}$ of degree $\Delta$. } 
\label{table: SCTlinear} 
\end{table}

 \begin{remark}
 Let $\varphi:\PP^n\dashrightarrow\ZZ\subset\PP^{n+a}$ be a special birational transformation of type $(d,1)$
 into a prime Fano manifold $\ZZ$ and whose base locus $\B$ has dimension three. 
 Assume further that $\ZZ$ is not a complete intersection. 
 Then we have $d\in\{2,3,4,5\}$,  by Proposition~\ref{PropDim}.
 
 If $d=2$, by \cite{fu-hwang} we deduce that 
 there are two types of transformations: either
 \begin{itemize}
 \item $\B=\PP^1\times \PP^2 \subset\PP^5\subset \PP^6$ and $\ZZ=\mathbb{G}(1,4)\subset\PP^9$; or
 \item $\B\subset\PP^6\subset\PP^7$ is a linear section of $\mathbb{G}(1,4)\subset\PP^9\subset\PP^{10}$ and $\ZZ\subset\PP^{12}$ is a linear section of the  Spinor variety $\mathbb{S}^{10}\subset\PP^{15}$.
 \end{itemize}
 
 If $d=3$, then one sees that $n=6$ and $\ZZ\subset \PP^{6+a}$ 
 has coindex $4$.
 The program used in Section~\ref{cubiche} 
 gives here a long list of not excluded cases. 
 Some of these cases occur really  but $\ZZ$ 
 is not necessarily smooth.
 Other cases can be easily excluded using the 
 classification of threefolds in $\PP^6$ of  degree at most twelve, see
 \cite{ionescu-smallinvariants,ionescu-smallinvariantsII,ionescu-smallinvariantsIII,
 fania-livorni-nine,fania-livorni-ten,besana-biancofiore-deg11,threefoldsdegree12}.
 We give a list of examples 
 in Subsection~\ref{subsection cubo-linear 2},
 which provides probably 
 the maximal list of types of 
 special cubo-linear birational transformations of $\PP^6$ into a 
 (factorial) variety $\ZZ\subset\PP^{6+a}$ and having base locus of dimension three.
 
 If $d=4$ or $d=5$, then we have $n=5$. 
 By \cite[Theorem~8]{alzati-sierra}
 we obtain that $d=5$, and that
 there are two types of transformations: either
 \begin{itemize}
  \item $\B$ has degree $13$ and sectional genus $19$, and $\ZZ\subset\PP^{10}$ 
  has 
 degree $21$ and coindex $4$; or 
  \item $\B$ has degree $11$ and sectional genus $13$, and
  $\ZZ\subset \PP^{15}$ is 
 a linear section of $\mathbb{G}(1,6)\subset\PP^{20}$.
 \end{itemize}
 An explicit example of such a threefold $\B\subset\PP^5$ of degree $13$ and sectional genus $19$
 can be constructed by taking the linked 
 inside the complete intersection 
 of a quartic and a quintic 
 to a septic Palatini scroll $\mathfrak P \subset \PP^5$ over
 a cubic surface in $\PP^3$ (see \cite{ionescu-smallinvariants,ottavianiScrolls}).
 By taking instead the linked $Y\subset\PP^5$ to $\mathfrak P \subset \PP^5$ 
 inside the complete intersection of two quartics, and then the linked 
 to $Y\subset\PP^5$ inside the complete intersection of a quartic and a quintic, 
 we get an explicit example of threefold of degree $11$ and sectional genus $13$ as above.
 \end{remark}

 \subsection{Further examples of special cubic transformations}\label{subsection cubo-linear 2}
 We conclude by giving four examples of special cubo-linear birational transformations 
 $\varphi:\PP^6\dashrightarrow\ZZ\subset\PP^{6+a}$
 of $\PP^6$ 
 into an irreducible variety $\ZZ\subset\PP^{6+a}$.
 In all the examples, the base locus $\B\subset\PP^6$ of $\varphi$ 
 is a smooth irreducible threefold and the base locus $\B'\subset\ZZ$ of $\varphi^{-1}$ is 
 an irreducible fourfold, equal to the image under $\varphi$ of the unique quadric hypersurface 
 containing $\B$.
 It turns out that the singular locus of $\ZZ$
 is strictly contained in $\B'$,
 as one can see by computing the tangent space 
 of $\ZZ$ at a general point of $\B'$.
 We are not able to check whether $\ZZ$ is factorial.
 
 \begin{example}\label{example 7.3}
 Let $Y\subset\PP^7$ be a smooth hyperplane section of $\PP^2\times \PP^2\subset\PP^8$.
 Let $X\subset\PP^6$ be the isomorphic projection of $Y$ 
 from a point not belonging to the secant variety of $Y$.
 Then $X$ is a smooth threefold 
 of degree $6$, sectional genus $1$, and cut out by $13$ cubics and one quadric.
 The linear system of cubics through $X$ 
 defines a birational map into a variety $\ZZ\subset\PP^{19}$ 
 of degree $57$ and cut out by $62$ quadrics. 
 Notice that from \cite{fujita-3-fold} it follows that there are no other special cubic birational 
 transformations of $\PP^6$ whose base locus is a non-linearly normal threefold.
 \end{example}
 
 \begin{example}\label{example quadric in P13}
  Let $Y\subset\PP^{13}\subset\PP^{14}$ be the image of a smooth quadric hypersurface in $\PP^4$
  under the quadratic Veronese embedding of $\PP^4$.
  Let $X\subset\PP^6$ be the projection of $Y$ in $\PP^6$ 
  from a linear subspace $\PP^6\subset\PP^{13}$ spanned by 
  $7$ general points on $Y$.
  Then one has that 
   $X$ is a smooth threefold of degree 
  $9$, sectional genus $5$, and cut out by $7$ cubics and one quadric.
   The linear system of cubics through $X$ 
 defines a birational map into a variety $\ZZ\subset\PP^{13}$ 
 of degree $30$ and cut out by $14$ quadrics.
 \end{example}
\begin{example}\label{threefold of degree 10 and sectional genus 7}
 Let $Y\subset\PP^8$ be a 
 general $3$-dimensional linear section of the $10$-dimensional 
Spinor variety  $\mathbb{S}^{10}\subset\PP^{15}$.
Let $X\subset\PP^6$ be the projection of $Y$ in $\PP^6$ from a line spanned by two general points on $Y$.
Then $X\subset\PP^6$ is a smooth threefold of degree $10$,
sectional genus $7$, and cut out by $6$ cubics and one quadric.
  The linear system of cubics through $X$ 
 defines a birational map into a variety $\ZZ\subset\PP^{12}$ 
 of degree $25$, cut out by $10$ quadrics, 
 and whose singular locus has dimension $1$.
\end{example}
\begin{example}\label{example 7.6}
Let $Y\subset\PP^6$ be a 
 general $3$-dimensional linear section of the Grassmannian $\mathbb{G}(1,4)\subset \PP^6$.
Take $W$ to be a generic complete intersection of two quadrics and one cubic containing $Y$,
and let $Y'=\overline{W\setminus Y}$ be the residual intersection. 
Take now 
$W'$ to be a generic complete intersection of one quadric and two cubics containing $Y'$, 
and let $X=\overline{W'\setminus Y'}$ be the residual intersection.
Then $X\subset\PP^6$ is a smooth threefold of degree $11$,
sectional genus $9$, and cut out by $5$ cubics and one quadric.
  The linear system of cubics through $X$ 
 defines a birational map into a variety $\ZZ\subset\PP^{11}$ 
 of degree $21$, cut out by $6$ quadrics and one cubic, 
 and whose singular locus has dimension $1$.
\end{example}
\begin{remark}
The existence of the threefold of degree $9$ and sectional genus $5$ given in 
Example~\ref{example quadric in P13}
had been left undecided
 in \cite{fania-livorni-nine}.

 The existence of the threefold of degree $10$ and sectional genus $7$ given in Example \ref{threefold of degree 10 and sectional genus 7} 
 reveals a missing case to the classification
  obtained in \cite{fania-livorni-ten} (it seems that the claim in \cite[Proposition~6.3]{fania-livorni-ten} is wrong).
\end{remark}

\subsection*{Acknowledgements}
The author is grateful to Francesco Russo for useful discussions and for his interest in the work.


\providecommand{\bysame}{\leavevmode\hbox to3em{\hrulefill}\thinspace}
\providecommand{\MR}{\relax\ifhmode\unskip\space\fi MR }
\providecommand{\MRhref}[2]{%
  \href{http://www.ams.org/mathscinet-getitem?mr=#1}{#2}
}
\providecommand{\href}[2]{#2}

\end{document}